\newtheorem{thm}{Theorem}[section]
\newtheorem{cor}[thm]{Corollary}
\newtheorem{lem}[thm]{Lemma}
\newtheorem{prop}[thm]{Proposition}
\newtheorem{conjecture}[thm]{Conjecture}
\theoremstyle{definition}
\theoremstyle{remark}
\newtheorem{rem}[thm]{Remark}
\newtheorem{remark}[thm]{Remark}
\numberwithin{equation}{section}
\newcommand{\To}{\longrightarrow}
\newcommand{\inv}{^{-1}}
\newcommand{\C}{\mathbb C}
\newcommand{\Z}{\mathbb Z}
\newcommand{\R}{\mathbb R}
\newcommand{\N}{\mathbb N}
\newcommand{\Y}{\mathcal Y}
\newcommand{\af}{\mathrm{af}}
\newcommand{\geh}{\mathfrak g}
\newcommand{\KK}{\mathcal K}
\newcommand{\MV}{\mathrm{MV}}
\newcommand{\OO}{\mathcal O}
\newcommand{\Perv}{\mathrm{Perv}}
\newcommand{\q}{\mathrm{qSchubert}}
\newcommand{\Rep}{\mathrm{Rep}}
\newcommand{\ip}[1]{\langle #1 \rangle}
\newcommand{\x}{\times}
\newcommand{\Gr}{\operatorname{Gr}}
\newcommand{\Spec}{\operatorname{Spec}}
\newcommand{\la}{\lambda}
\newcommand{\al}{\alpha}
\newcommand{\ep}{\varepsilon}
\newcommand{\si}{\sigma}
\newcommand{\Ad}{\operatorname{Ad}}
\newcommand{\sign}{\operatorname{sign}}
\newcommand{\Sym}{\operatorname{Sym}}
\newcommand{\height}{\operatorname{ht}}
\begin{document}

\title{Total positivity, Schubert positivity, and Geometric Satake}
\author{Thomas Lam}%
\address{Department of Mathematics, University of Michigan, Ann Arbor, MI
  48109 USA}%
\email{tfylam@umich.edu}%

\author{Konstanze Rietsch}%
\address{King's College London}%
\email{konstanze.rietsch@kcl.ac.uk}%


 \thanks{TL was supported by NSF grants DMS-0901111 and DMS-1160726, and by a Sloan Fellowship.}
\thanks{KR was funded by
EPSRC grant EP/D071305/1.}

\subjclass[2010]{20G20, 15A45, 14N35, 14N15} \keywords{Flag varieties,
quantum cohomology, total positivity}

\begin{abstract}
Let $G$ be a simple and simply-connected complex algebraic group, and let $X \subset G^\vee$ be the centralizer subgroup of a principal nilpotent element.  Ginzburg and Peterson independently related the ring of functions on $X$ with the homology ring of the affine Grassmannian $\Gr_G$.  Peterson furthermore connected this ring to the quantum cohomology rings of partial flag varieties $G/P$.

The first aim of this paper is to study three different notions of positivity on $X$: (1) {\it Schubert positivity} arising via Peterson's work, (2) {\it total positivity} in the sense of Lusztig, and (3) {\it Mirkovic-Vilonen positivity} obtained from the MV-cycles in $\Gr_G$.  Our first main theorem establishes that these three notions of positivity coincide.  The second aim of this paper is to parametrize the totally nonnegative part of $X$, confirming a conjecture of the second author.  

In type A a substantial part of our results were previously established by the second author.  The crucial new component of this paper is the connection with the affine Grassmannian and the geometric Satake correspondence.
%
%
%
\end{abstract}
\maketitle
\section{Introduction}

Let $G$ be a simply connected, semisimple complex linear algebraic group, split over $\R$.
The Peterson variety $\mathcal Y$ may be viewed as the compactification of the stabilizer $X:=G^\vee_F$ of
a standard principal nilpotent $F$ in $(\mathfrak g^\vee)^*$ (with respect to the coadjoint
representation of $G^\vee$), which one obtains by embedding $X$ into the Langlands dual
flag variety $G^\vee/B_-^\vee$ and taking the closure there.

Ginzburg \cite{Gin:GS} and Peterson \cite{Pet:QCoh} independently showed that the coordinate ring $\mathcal O(X)$ of the variety $X$ was isomorphic to the homology $H_*(\Gr_G)$ of the affine Grassmannian $\Gr_G$ of $G$, and Peterson discovered
moreover that the compactification $\mathcal Y$ encodes the quantum
cohomology rings of all of the flag varieties $G/P$.  Peterson's remarkable work in particular
exhibited explicit homomorphisms between localizations of $qH^*(G/P,\C)$ and $H_*(\Gr_G,\C)$ taking quantum Schubert classes $\sigma_w^P$ to  affine homology Schubert classes $\xi_x$.  These homomorphisms were verified in  \cite{LaSh:QH}.

The first aim of this paper is to compare different notions of positivity for the real points of $X$: (i) the {\it affine Schubert positive} part $X^{\af}_{>0}$ where affine Schubert classes $\xi_x$ take positive values via Ginzburg and Peterson's isomorphism $H_*(\Gr_G) \simeq \mathcal O(X)$; (ii) the {\it totally positive} part $X_{>0} :=X \cap U^\vee_{-,>0}$ in the sense of Lusztig \cite{Lus:TotPos94}; and (iii) the {\it Mirkovic-Vilonen positive} part $X^{\MV}_{>0}$ where the classes of the Mirkovic-Vilonen cycles from the geometric Satake correspondence \cite{MV:GS} take positive values.  

Our first main theorem (Theorem \ref{thm:three}) states that these three notions of positivity coincide.  For $G$ of type $A$ the coincidence $X^{\af}_{>0} = X_{>0}$ was already established in \cite{Rie:QCohPFl}, where instead of $X^{\af}_{>0}$, the notion of {\it quantum Schubert positivity} was used.  In general quantum Schubert positivity is possibly weaker than affine Schubert positivity.  It follows from \cite{Rie:QCohPFl} that the notions coincide in type A, and we verify that they coincide in type C in Appendix \ref{s:proof1}.

Our second main theorem (Theorem \ref{t:main}) is a parametrization of the totally positive $X_{>0}$ and totally nonnegative $X_{\geq 0}$ parts of $X$.  We show that they are homeomorphic to $\R_{>0}^n$ and $\R_{\geq 0}^n$ respectively.  This was conjectured by the second author in \cite{Rie:QCohPFl} where it was established in type $A$.  In type $A_n$ we have that $X=G^\vee_F$ is the $n$-dimensional subgroup of lower-triangular unipotent Toeplitz matrices, and thus the parametrization $X_{\geq 0} \simeq \R_{\geq 0}^n$ is a ``finite-dimensional'' analogue of the Edrei-Thoma theorem \cite{Edr:ToeplMat} parametrizing {\it infinite} totally nonnegative Toeplitz matrices, appearing in the classification of the characters of the infinite symmetric group.  The results of this article give an arbitrary type generalization. 

The strategy of our proof is as follows: to show that $X^{\af}_{>0} \subseteq X^{\MV}_{>0}$ we use a result of Kumar and Nori \cite{KuNo:pos} stating that effective classes in $H_*(\Gr_G)$ are Schubert-positive.  We then use the geometric Satake correspondence \cite{Gin:GS,MV:GS,Lus:GS} to describe $X^{\MV}_{>0}$ via matrix coefficients, and a result of Berenstein-Zelevinsky \cite{BeZe:Chamber} to connect to the totally positive part $X_{>0}$.  

Finally, to connect $X_{>0}$ back to $X^{\af}_{>0}$, we parametrize the latter directly by combining the positivity of the 3-point Gromov-Witten invariants of $qH^*(G/B)$ with the Perron-Frobenius theorem.  This argument follows the strategy of \cite{Rie:QCohPFl}.

There is a general phenomenon \cite{Lus:TotPos94,BeZe:Chamber} that totally positive parts have ``nice parametrizations''.  This phenomenon is closely related to the relation between total positivity and the canonical bases \cite{Lus:Can}, and also the cluster algebra structures on related stratifications \cite{Fom}.  Indeed our work suggests that the coordinate ring $\mathcal O(X)$ has the affine homology Schubert basis $\{\xi_w\}$ as a ``dual canonical basis'', and that the Hopf-dual universal enveloping algebra $U(\geh^\vee_F)$ has the cohomology affine Schubert basis $\{\xi^w\}$ as a ``canonical basis''.  Certainly the affine Schubert bases have the positivity properties expected of canonical bases.

In \cite{Rie:TotPosGBCKS} the type A parameterization result for the totally positive part $X_{>0}$ of the 
Toeplitz matrices $X$ is proved in a completely different way, using a mirror symmetric construction of $X$.
This approach does not however prove the interesting positivity properties of the bases we study in this paper. 
The mirror symmetric approach was partly generalized to other types in \cite{Rie:QToda}, where the 
existence of a totally positive point in $X$ for any choice of positive quantum parameters is proved (but not its uniqueness).

\vskip.2cm {\it Acknowledgements.} The authors would
like to thank Dale Peterson for his beautiful results which underly
this work. The second author also thanks Dima Panov for some helpful
conversations. The authors thank Victor Ginzburg for a question which
led to the inclusion of Section~6.2.

\section{Preliminaries and notation}\label{s:Prelims}
Let $G$ be a simple linear algebraic group over
$\C$ split over $\R$. Usually $G$ will be simply connected. 
 Denote  by $\Ad:G\to GL(\mathfrak
g)$  the adjoint representation of $G$ on its Lie algebra $\mathfrak
g$. We fix opposite Borel subgroups $B^+$ and $B^-$ defined over
$\R$ and intersecting in a split torus $T$. Their Lie algebras
are denoted by $\mathfrak b^+$ and $\mathfrak b^-$ respectively. We
will also consider their unipotent radicals $U^+$ and $U^-$ with
their Lie algebras $\mathfrak u^+$ and $\mathfrak u^-$.

Let $X^*(T)$ be the character group
of $T$ and $X_*(T)$ the group of cocharacters together with the
usual perfect pairing $\ip{\, ,\,}: X^*(T)\x X_*(T)\to\Z$.  
We may identify $X^*(T)$ with a lattice inside $\mathfrak h^*$,
and $X_*(T)$ with the dual lattice inside $\mathfrak h$. These
span the real forms $\mathfrak h^*_\R$ and $\mathfrak h_\R$,
respectively. 

Let $\Delta_+\subset X^*(T)$ be the set of positive roots corresponding
to $\mathfrak b^+$, and $\Delta_-$ the set of negative roots. 
There is a unique highest root in $\Delta_+$
which is denoted by $\theta$.
Let $I=\{1,\dotsc, n\}$ be an indexing set for the set $\Pi:=\{\alpha_i\ |\ i\in I\}$
of positive simple
roots . 
The $\alpha_i$-root space $\mathfrak g_{\alpha_i}\subset\mathfrak g$ is
spanned by Chevalley generator $e_i$ and $\mathfrak g_{-\alpha_i}$
is  spanned by $f_i$.  The split real form of $\mathfrak g$, denoted $\mathfrak g_\R$
is generated by the Chevalley generators $e_i,f_i$.

Let $Q:=\left < \alpha_1,\dotsc,\alpha_{n}\right >_\Z$ be the root lattice.
We also have the
fundamental weights $\omega_1,\dotsc, \omega_{n}$,  and the weight lattice
$L:=\left < \omega_1,\dotsc,\omega_{n}\right >_\Z$  associated to $G$.  If $G$ is simply connected,  we have the relations
$$
Q\subset X^*(T)=L\subset \mathfrak h^*.
$$
Let $Q^\vee$ denote the lattice spanned by the simple coroots, 
$\alpha_1^\vee,\dotsc, \alpha_n^\vee$, and $L^\vee$ the lattice 
spanned by the fundamental coweights $\omega^\vee_1,\dotsc,\omega^\vee_{n}$.
Then $Q^\vee$ is the dual lattice to $L$ and $L^\vee$ the dual lattice to $Q$,
giving   
$$
Q^\vee=X_*(T)\subset L^\vee\subset \mathfrak h,
$$
in the case where $G$ is simply connected. We set $\rho=\sum_{i\in I}\omega_i$ and
write $\operatorname{ht}(\lambda^\vee)=\left<\rho,\lambda^\vee\right>$ for the 
height of $\lambda^\vee\in Q^\vee$.


For any Chevalley generator $e_i, f_i$ of $\mathfrak g$ we may define
 a `simple root subgroup' by
  $$
  x_i(t)=\exp(t e_i),\qquad y_i(t)=\exp(t f_i),\qquad\qquad \text{for $t\in\C$.}
 $$

Let $W=N_G(T)/T$ be the Weyl group of $G$. It is generated by simple
reflections $s_1,\dotsc, s_{n}$. The length function $\ell: W\to \N$
gives the length of a reduced expression of $w\in W$ in the simple
reflections. The unique longest element is denoted $w_0$, and for a 
root $\alpha$, we let $r_\alpha$ denote the corresponding reflection.  For any
simple reflection $s_i$ we choose a representative $\dot s_i$ in $G$
defined by
$$
\dot s_i:=x_i(-1) y_i(1)x_i(-1).
$$
If $w=s_{i_1}\dotsc s_{i_m}$ is a reduced expression, then $\dot
w:=\dot s_{i_1}\dotsc \dot s_{i_m}$ is  a well-defined
representative for $w$, independent of the reduced expression
chosen. $W$ is a poset under the Bruhat order $\le$.

We denote the Langlands dual group of $G$ by $G^\vee$, or $G^\vee_\C$
to emphasize that we mean the algebraic group over $\C$.  
The notations for $G^\vee$ are the same as those for $G$ but with added
${}^\vee$ and any other superscripts moved down, for example $B^\vee_+$ for the
analogue of $B^+$.

\subsection{Parabolic subgroups}\label{s:parabolics}
Let $P$ denote a parabolic subgroup of $G$ containing $B^+$, and
let $\mathfrak p$ be the Lie algebra of $P$. Let $I_P$ be the subset
of $I$ associated to $P$ consisting of all the $i\in I$ with
$\dot s_i\in P$ and consider its complement $I^P:=I\setminus I_P$. 

Associated to $P$ we have the parabolic subgroup
$W_P=\left<s_i\ |\ i\in I_P\right>$ of $W$.
We let $W^P\subset W$ denote the set of minimal coset representatives
for $W/W_P$. An element $w$ lies
in $W^P$ precisely if for all reduced expressions
$w=s_{i_1}\cdots s_{i_m}$ the last index $i_m$ always lies in
$I^P$. We write $w^P$ or $w_0^P$ for the longest element in $W^P$,
while the longest element in $W_P$ is denoted $w_P$. For example
$w^B_0=w_0$ and $w_B=1$. Finally $P$ gives rise to a decomposition
\begin{equation*}
\Delta_+= \Delta_{P,+}\sqcup \Delta_{+}^P.
\end{equation*}
Here $\Delta_{P,+}=\{\al\in\Delta_+ \ | \ \langle\al,
\omega_i^\vee\rangle=0
\text{ all $i\in I^P$} \} $, so that
$$
\mathfrak p=\mathfrak b^+\oplus\bigoplus_{\alpha\in\Delta_{P,+}}\mathfrak g_{-\alpha},
$$
and $\Delta_+^P$ is the complement of $\Delta_{P,+}$ in $\Delta_+$.  
For
example $\Delta_{B,+}=\emptyset$ and $\Delta^B_+=\Delta_+$.

\section{Total Positivity}\label{s:totpos}
\subsection{Total positivity}
A matrix $A$ in $GL_n(\R)$ is called {\it totally positive} (or {\it
totally nonnegative}) if all the minors of $A$ are positive
(respectively nonnegative). In other words $A$ acts by positive or
nonnegative matrices in all of the fundamental representations
$\bigwedge^k\R^n$ (with respect to their standard bases). In the
1990's Lusztig  \cite{Lus:TotPos94} extended this theory dating back
to the 1930's to all reductive algebraic groups. This work followed
his construction of canonical bases and utilized their deep
positivity properties in types ADE.

Let $G$ be a simple algebraic group, split over the
reals.  For the rest of this paper the definitions here will be applied to $G^\vee$ rather than $G$.

The totally nonnegative part $U^+_{\geq 0}$ of $U_+$ is the
semigroup generated by $\{x_i(t) \mid i \in I \; \text{and} \; t \in
\R_{\geq 0}\}$.  Similarly the totally nonnegative part $U^-_{\geq
0}$ of $U_-$ is the semigroup generated by $\{y_i(t) \mid i \in I \;
\text{and} \; t \in \R_{\geq 0}\}$.  The totally positive parts are
given by $U^+_{>0} = U^+_{\geq 0} \cap B^-\dot w_0 B^-$ and
$U^-_{>0} = U^-_{\geq 0} \cap B^+\dot w_0 B^+$.
%

\subsection{Matrix coefficients}
Suppose $\lambda \in X^*(T)$ is dominant.  Then we have a highest weight
irreducible representation $V_\lambda$ for $G$. The Lie algebra 
$\mathfrak g$ also acts on 
$V_\lambda$ as does its universal enveloping algebra $U(\mathfrak g)$.
We fix a highest weight vector $v_\lambda^+$ in $V_\lambda$. 
The vector space $V_\lambda$ has a real form given by
$V_{\lambda,\R}=U(\mathfrak g_{\mathbb R})\cdot v^+_\lambda$.

 Let $(\ )^T: U(\mathfrak g)\to  U(\mathfrak g)$ be the
unique involutive anti-automorphism satisfying $e_i^T=f_i$.
We let $\ip{.,.}:  V_\lambda\times V_\lambda \to \C$ denote the unique symmetric, non-degenerate 
bilinear form (Shapovalov form)\cite[II, 2.3]{Kum:Book} satisfying 
\begin{eqnarray} \label{E:adjoint}
 \ip{u \cdot v,v'} &= & \ip{v,u^T \cdot v'} \qquad \text{for all $u\in U(\mathfrak g), v,v'\in V_\lambda$,}
\end{eqnarray}
normalized so that $\ip{v^+_\lambda,v^+_\lambda} =1$. The Shapovalov form is
real positive definite on $V_{\lambda,\R}$, see \cite[Theorem 2.3.13]{Kum:Book}.

 We will be
studying total positivity in the Langlands dual group $G^\vee$ of a
simply-connected group $G$.  Thus $G^\vee$ will be adjoint. Let
$G^*$ be the simply-connected cover of $G^\vee$. Then the unipotent
subgroups of $G^*$ and $G^\vee$ can be identified, and so can their
totally positive (resp.~negative) parts.  The purpose of this
observation is to allow the evaluation of matrix coefficients of
fundamental representations on the unipotent subgroup of $G^\vee$.
(The adjoint group $G^\vee$ itself may not act on these
representations.)

Thus for a fundamental weight $\omega_i$ (not necessarily a
character of $G$!) and a vector $v \in V_{\omega_i}$ we have a
matrix coefficient
$$
y\longmapsto \ip{v, y \cdot v^+_{\omega_i}}
$$
on $U^-$. 
The following result follows from a theorem (\cite[Theorem 1.5]{BeZe:Chamber}) of Berenstein and Zelevinsky (note that every chamber weight is a $w_0$-chamber weight in the terminology of \cite{BeZe:Chamber}).
\begin{prop}\label{P:BZ}
Let $y \in U^-$.  Then $y$ is totally positive if and only if for
any $i \in I$ we have
$$
\ip{\dot w\cdot v^+_{\omega_i}, y\cdot v^+_{\omega_i} }> 0
$$
for each $w \in W$, where $v^+_{\omega_i}$ denotes a highest
weight vector in the irreducible highest weight representation
$V_{\omega_i}$.  
\end{prop}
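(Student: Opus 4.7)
The plan is to deduce the proposition directly from Berenstein--Zelevinsky's Theorem 1.5 in \cite{BeZe:Chamber} by identifying each matrix coefficient $y\mapsto \ip{\dot w\cdot v^+_{\omega_i},\, y\cdot v^+_{\omega_i}}$ with a generalized minor $\Delta_{w\omega_i,\omega_i}$ of the kind used there, and then observing, as the parenthetical remark in the statement records, that the $W$-translates of fundamental weights are precisely the $w_0$-chamber weights.

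First, I would normalize the extremal weight vector $\dot w\cdot v^+_{\omega_i}$. It has weight $w\omega_i$ by construction, and the claim is that it is a Shapovalov unit vector. Since the form satisfies $\ip{u\cdot v,v'}=\ip{v,u^T\cdot v'}$ with $e_i^T=f_i$, it suffices to verify the identity $\dot w^T=\dot w^{-1}$ as operators on $V_{\omega_i}$. Because $\dot w$ is well-defined independently of a reduced expression, this reduces to the simple reflection case. A direct $SL_2$ computation using the definition $\dot s_i=x_i(-1)y_i(1)x_i(-1)$ and $x_i(t)^T=y_i(t)$ gives $\dot s_i^T=y_i(-1)x_i(1)y_i(-1)=\dot s_i^{-1}$, yielding
\[
\ip{\dot w\cdot v^+_{\omega_i},\,\dot w\cdot v^+_{\omega_i}}=\ip{v^+_{\omega_i},\,\dot w^T\dot w\cdot v^+_{\omega_i}}=\ip{v^+_{\omega_i},v^+_{\omega_i}}=1.
\]

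Next I would identify the matrix coefficient with the generalized minor. Weight spaces for distinct weights are orthogonal under the Shapovalov form, and the extremal weight space $V_{\omega_i}[w\omega_i]$ is one-dimensional and spanned by the unit vector $\dot w\cdot v^+_{\omega_i}$. Therefore the linear functional $v\mapsto \ip{\dot w\cdot v^+_{\omega_i},v}$ extracts the coefficient of $\dot w\cdot v^+_{\omega_i}$ in the expansion of $v$ with respect to an extremal-weight-adapted basis of $V_{\omega_i}$. Evaluating on $v=y\cdot v^+_{\omega_i}$ for $y\in U^-$ returns exactly the Berenstein--Zelevinsky generalized minor $\Delta_{w\omega_i,\omega_i}(y)$. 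Theorem 1.5 of \cite{BeZe:Chamber} then asserts that $y\in U^-$ is totally positive if and only if $\Delta_{\gamma,\omega_i}(y)>0$ for all $i\in I$ and all $w_0$-chamber weights $\gamma$; combined with the identification of $w_0$-chamber weights with $W$-orbits of fundamental weights, this gives the proposition.

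The only delicate point is verifying the normalization and sign in Step 1, which guarantees that the matrix coefficient equals the generalized minor on the nose (and not, say, its negative). This is handled by using the Tits-type representatives $\dot w$, where the identity $\dot w^T=\dot w^{-1}$ makes the computation uniform. Everything else is essentially bookkeeping: translating between the Shapovalov/matrix-coefficient language of this paper and the generalized-minor language of \cite{BeZe:Chamber}.
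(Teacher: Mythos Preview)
Your proposal is correct and matches the paper's approach exactly: the paper simply states that the result follows from \cite[Theorem~1.5]{BeZe:Chamber} together with the observation that every chamber weight is a $w_0$-chamber weight, without spelling out any details. The normalization check $\dot w^T=\dot w^{-1}$ that you supply to pin down the sign of the matrix coefficient as a generalized minor is a useful addition that the paper leaves implicit.
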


We will need the following generalization of the above Proposition. 

\begin{prop}\label{P:genBZ}
Let $y \in U^-$. Suppose for any irreducible 
representation  $V_\lambda$ of $G$
with highest weight vector $v^+_\lambda$, 
and any weight vector $v$ which lies in a 
one-dimensional weight space of $V_{\lambda}$ 
such that $\ip{v,x \cdot v^+_{\lambda}}>0$ for all totally positive $x \in U^{-}_{>0}$ we have
\begin{equation}\label{e:allowable}
\ip{v, y\cdot v^+_{\lambda} }> 0.
\end{equation}
Then $y$ is totally positive.
\end{prop}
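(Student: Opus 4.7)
The plan is to reduce directly to Proposition \ref{P:BZ}. To conclude that $y \in U^-$ is totally positive, it suffices by Proposition \ref{P:BZ} to verify that $\ip{\dot w \cdot v^+_{\omega_i}, y \cdot v^+_{\omega_i}} > 0$ for every $w \in W$ and every $i \in I$. So the task reduces to showing that each vector $v = \dot w \cdot v^+_{\omega_i}$ (with $\lambda = \omega_i$) satisfies the two conditions imposed on the ``allowable'' vectors in the hypothesis of Proposition \ref{P:genBZ}.

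First I would verify that $\dot w \cdot v^+_{\omega_i}$ lies in a one-dimensional weight space of $V_{\omega_i}$. This is a standard fact about irreducible highest-weight representations: the weight $w(\omega_i)$ lies in the Weyl group orbit of the highest weight $\omega_i$, and extremal weights of an irreducible representation always have one-dimensional weight spaces (since the highest weight space is one-dimensional, and any element of $W$ acts on weight spaces by an isomorphism via a representative $\dot w$).

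Second, I would check the positivity hypothesis: for every totally positive $x \in U^-_{>0}$ we need $\ip{\dot w \cdot v^+_{\omega_i}, x \cdot v^+_{\omega_i}} > 0$. But this is precisely the forward direction of Proposition \ref{P:BZ} applied to $x$ in place of $y$. Hence the vector $v = \dot w \cdot v^+_{\omega_i}$ qualifies as one of the weight vectors $v$ in the hypothesis of Proposition \ref{P:genBZ}.

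Applying the hypothesis \eqref{e:allowable} with this choice of $\lambda$ and $v$, we conclude $\ip{\dot w \cdot v^+_{\omega_i}, y \cdot v^+_{\omega_i}} > 0$ for every $w \in W$ and every $i \in I$. Proposition \ref{P:BZ} then yields that $y$ is totally positive, as desired. There is no serious obstacle here; the only minor point to keep in mind is the one-dimensionality of extremal weight spaces, which allows one to invoke the hypothesis for the specific vectors arising from Proposition \ref{P:BZ}.
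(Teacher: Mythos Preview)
Your argument is essentially the trivial reduction, and it works only when $G$ is simply connected. The paper says exactly this immediately after the statement: ``If $G$ is simply connected then this Proposition \ref{P:genBZ} follows from Proposition~\ref{P:BZ}. The difference arises if $G$ is not simply connected, in which case the fundamental weights may not be characters of the maximal torus of $G$.'' The hypothesis of Proposition~\ref{P:genBZ} only gives you information about irreducible representations $V_\lambda$ \emph{of $G$}, i.e.\ with $\lambda\in X^*(T)$. When $G$ is not simply connected (in the paper's intended application $G=G^\vee$ is adjoint), the fundamental weight $\omega_i$ need not lie in $X^*(T)$, so $V_{\omega_i}$ is not a representation of $G$ and you are not permitted to set $\lambda=\omega_i$ in the hypothesis. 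Your second step therefore collapses: you cannot invoke \eqref{e:allowable} for the vectors $\dot w\cdot v^+_{\omega_i}$.

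The actual proof in Section~\ref{s:genBZ} is precisely the work needed to bridge this gap. The paper reduces to $G$ of adjoint type and then, type by type, manufactures ``allowable'' representations $V_\lambda$ (with $\lambda$ in the root lattice) containing suitable one-dimensional weight spaces, and deduces the positivity of the fundamental-representation matrix coefficients $\ip{\dot w\cdot v^+_{\omega_i}, y\cdot v^+_{\omega_i}}$ from positivity of matrix coefficients in those allowable representations, typically via tensor-product and sign arguments. Only after that does Proposition~\ref{P:BZ} apply. So the missing idea in your proposal is exactly the nontrivial content of the proposition.
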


The proof of Proposition~\ref{P:genBZ} is delayed until Section \ref{s:genBZ}.  If $G$ is simply connected then this Proposition \ref{P:genBZ} follows from 
Proposition~\ref{P:BZ}.   The difference arises if $G$ is not simply connected, in which case the fundamental weights may not be
characters of the maximal torus of $G$. 

\begin{remark}\label{rem:can}
Suppose $G$ is simply-laced.  Then the matrix coefficients of $x \in U^{-}_{>0}$ in the canonical basis of any irreducible representation $V_\lambda$ are positive.  It follows that for any $v \neq 0$ lying in a one-dimensional weight space of $V_\lambda$, either $v$ or $-v$ has the property that $\ip{v,x \cdot v^+_{\lambda}}>0$ for all $x \in U^-_{>0}$.
\end{remark}

\section{The affine Grassmannian and geometric Satake}
In this section, $G$ is a simple simply-connected linear algebraic group over
$\C$.  Let $\OO = \C[[t]]$ denote the ring of formal power series and $\KK
=\C((t))$ the field of formal Laurent series.  Let $\Gr_G =
G(\KK)/G(\OO)$ denote the affine Grassmannian of $G$.  

\subsection{Affine Weyl group}\label{s:affWeylGroup}
Let $W_\af = W \ltimes X_*(T)$ be the affine Weyl group of $G$.  For
a cocharacter $\lambda \in X_*(T)$ we write $t_\lambda \in W_\af$
for the translation element of the affine Weyl group.  We then have
the commutation formula $wt_\lambda w^{-1} = t_{w \cdot \lambda}$.
The affine Weyl group is also a Coxeter group, generated by simple
reflections $s_0,s_1,\ldots,s_n$, where $s_0=r_\theta t_{-\theta^\vee}$.
It is a graded poset with its usual length function $\ell:W_{\af}\to \Z_{\ge 0}$, 
and Bruhat order $\ge$.

Let $W_\af^-$ denote the minimal length coset representatives of
$W_\af/W$.  Thus we have canonical bijections
\begin{equation}\label{e:Wafminus}
X_*(T) \longleftrightarrow W_\af/W \longleftrightarrow W_\af^-.
\end{equation}
The intersection $X_*(T) \cap W_\af^-$ is given by the anti-dominant
translations, that is $t_\lambda$ where $\ip{\alpha_i,\lambda} \leq
0$ for each $i \in I$.

Note that an element $\lambda$ of $X_*(T)$ viewed as a map from $\C^*$ to $T$
can also be reinterpreted as an element of  $T(\KK)$. We denote this element
by $t^\lambda$. The two should not be confused since the isomorphism
$W_{\af}\to N_{G(\KK)}(T)/T$ sends $t_{\lambda}$ to $t^{-\lambda}$.

\subsection{Geometric Satake and Mirkovic-Vilonen cycles}
The affine Grassmannian is an ind-scheme \cite{Kum:Book,Gin:GS,MV:GS}.  The $G(\OO)$-orbits
$\Gr_\lambda$ on $\Gr_G$ are parametrized by the dominant cocharacters
$\lambda \in X^+_*(T)$. Namely,
$$
\Gr_\lambda:=G(\OO)t^{\lambda}G(\OO)/G(\OO).
$$

The geometric Satake correspondence \cite{Gin:GS, Lus:GS, MV:GS}
(with real coefficients) states that the tensor category
$\Perv(\Gr_G)$ of $G(\OO)$-equivariant perverse sheaves on $\Gr_G$
with $\C$-coefficients is equivalent to the tensor category
$\Rep(G^\vee_\C)$ of finite-dimensional representations of the Langlands dual group $G^\vee_\C$.  (For our
purposes the tensor structure will be unimportant.)  The simple
objects of $\Perv(\Gr_G)$ are the intersection cohomology complexes
$IC_\lambda$ of the $G(\OO)$-orbit closures $\overline{\Gr}_\lambda$.
They correspond under the geometric Satake correspondence to the
highest weight representations $V_\lambda$ of $G^\vee$.
Furthermore, we have a canonical isomorphism
\begin{equation}\label{E:IC}
IH^*(\overline{\Gr}_\lambda) = H^*(\Gr_G, IC_\lambda) \simeq
V_\lambda.
\end{equation}
Mirkovic and Vilonen found explicit cycles in $\Gr_G$ whose
intersection homology classes give rise to a weight-basis of
$V_\lambda$ under the isomorphism $\eqref{E:IC}$.    We denote by
$\MV_{\lambda,v}$ the MV-cycle with corresponding  vector $v \in V_\lambda$.
For  $w \in W$, the weight-space
$V_\lambda(w\lambda)$ is one-dimensional.  We denote by
$\MV_{\lambda,w{\lambda}}$ the corresponding MV-cycle.  Thus
$[\MV_{\lambda,w\lambda}]_{IH} \in IH^*(\overline{\Gr}_\lambda) \simeq
V_\lambda$ has weight $w\lambda$.  All the statements of this section hold with $\R$-coefficients: we take perverse sheaves with $\R$-coefficients, and consider the representations of a split real form $G^\vee_\R$ of the Langlands dual group.

\subsection{Schubert varieties in $\Gr_G$}
Let $\mathcal I \subset G(\OO)$ denote the Iwahori
subgroup of elements $g(t)$ which evaluate to $g\in B^+$ at $t=0$. 
The $\mathcal I$-orbits $\Omega_\mu$ on $\Gr_G$, called {\it
Schubert cells}, are labeled by all (not necessarily dominant)
cocharacters $\mu \in X_*(T)$. Explicitly,
$$\Omega_\mu=\mathcal I\, t^{\mu} G(\mathcal O)/G(\mathcal O).
$$  
Alternatively, we may label
Schubert cells by cosets $xW \in W_\af/W$ or minimal coset
representatives $x \in W_\af^-$, using the bijection \eqref{e:Wafminus}.
Choosing a representative $\dot x$ of $x$ we have 
$$
\Omega_x=\mathcal I\, \dot x G(\mathcal O)/G(\mathcal O).
$$
The Schubert cell $\Omega_\mu= \Omega_x$ is isomorphic 
to $\C^{\ell(x)}$ whenever $x \in
W_\af^-$. We note that $\Omega_\mu=\Omega_{t_{-\mu}}$ if $\mu$ is dominant, compare
Section~\ref{s:affWeylGroup}. The Schubert varieties $X_x = \overline{\Omega_x}$, alternatively denoted $X_\mu=\overline{\Omega_\mu}$, are
themselves unions of Schubert cells: $X_x = \sqcup_{v \leq x}
\Omega_v$. The $G(\OO)$ orbits are also unions of Schubert
cells:
$$
\Gr_{\lambda}=\bigsqcup_{w\in W} \Omega_{w\cdot\lambda}.
$$
In particular the largest one of these, $\Omega_{\lambda}\cong \C^{\ell(t_{-\lambda})}$,
is open dense in $\Gr_\lambda$ (where we assumed $\lambda$ dominant), and so
\begin{equation}\label{e:GrLambdaIsSchub}
 \overline{\Gr}_{\lambda}=\overline{\Omega_{\lambda}}=X_{ \lambda}.
 \end{equation}
Thus every $G(\OO)$-orbit closure
is a Schubert variety, but not conversely.  
Moreover $\overline{\Gr}_{\lambda}$ has dimension $\ell(t_{-\lambda})$, which 
equals $2\operatorname{ht}(\lambda)$.
 
We note that the $MV$-cycle $MV_{\lambda,v}$ is an irreducible
subvariety of $\overline{\Gr_\lambda}$ of dimension $\height(\lambda)+\height(\nu)$
if $v$ lies in the $\nu$-weight space of $V_\lambda$, see \cite[Theorem~3.2]{MV:GS}. In particular 
$MV_{\lambda,\lambda}=\overline{\Gr_\lambda}$ and  $MV_{\lambda,w_0\lambda}$
is just a point.


\subsection{The (co)homology of $\Gr_G$}
The space $\Gr_G$ is homotopic to the based loop group
$\Omega K$ of polynomial maps of $S^1$ into the compact form $K
\subset G$ \cite{Qui:unp,PS:loopgroups}.  Thus the homology
$H_*(\Gr_G; \C)$ and cohomology $H^*(\Gr_G; \C)$ are commutative and
co-commutative graded dual Hopf algebras over $\C$.  

Ginzburg
\cite{Gin:GS} (see also \cite{BFM:K-hom}) and Dale Peterson \cite{Pet:QCoh} described $H_*(\Gr_G, \C)$
 as the coordinate ring of the stabilizer subgroup 
of a principal nilpotent in $\mathfrak (g^\vee)^*$. Namely, in our conventions, let
$F\in (\mathfrak g^\vee)^*$ be the principal nilpotent
element defined by
$$F=\sum_{i\in I} (e_i^\vee)^*,
$$
where  $(e_i^\vee)^*(\zeta )=0$
if $\zeta \in\mathfrak g_\alpha^\vee$ for $\alpha\ne\alpha_i$,
and $(e_i^\vee)^*(e_i^\vee)=1$. 
Let $X= (G^\vee)_F$ denote the stabilizer of $F$ inside
$G^\vee$, under the coadjoint action.  It is an abelian
subgroup of $U^\vee_-$ of dimension equal to the rank of $G$.
Then the result from  \cite{Gin:GS,Pet:QCoh} says that  $H_*(\Gr_G)$ is Hopf-isomorphic to the ring of
regular functions on $X$. 
Moreover, the cohomology, $H^*(\Gr_G,\C)$ is  Hopf-isomorphic to the 
universal enveloping algebra $U(\geh^\vee_F)$ of the centralizer of $F$,
as graded dual.

We note that Ginzburg \cite{Gin:GS} works over $\C$  while Peterson
 \cite{Pet:QCoh} works over $\Z$, but the details of Peterson's work
 are so far unpublished. 

Our choice  of principal nilpotent $F$ is
compatible via Peterson's isomorphism \eqref{e:PetIsoLoopHom}, see \cite{LaSh:QH},
with the conventions in 
\cite{Kos:QCoh,Kos:QCoh2,Rie:MSgen}, and is
related to the choice in \cite{Gin:GS,Pet:QCoh} by switching the roles of $B^+$
and $B^-$.

In terms of the above presentation
of $H_*(\Gr_G)$, the fundamental class of an MV-cycle can be described
as follows.  Let $\ip{.,.}:H^*(\Gr_G) \times H_*(\Gr_G) \to \C$ be the
pairing obtained from cap product composed with pushing forward to a
point.

\begin{prop}\label{p:MVmatrix} 
Suppose $\MV_{\lambda,v}$ is the MV-cycle with corresponding 
weight vector $v \in V_\lambda$ under \eqref{E:IC}.  Let $u \in
U(\geh^\vee_F) \simeq H^*(\Gr_G)$.  Then the fundamental class
$[\MV_{\lambda,v}] \in H_*(\Gr_G)$ satisfies
$$
\ip{u,[\MV_{\lambda,v}]} = \ip{ u\cdot v,v^-_\lambda},
$$
where $v^-_\lambda$ is the lowest weight vector of $V_\lambda$ (in the MV-basis).
\end{prop}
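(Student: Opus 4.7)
The plan is to move both sides through the Peterson--Ginzburg dictionary and identify them with the same element of $\OO(X)$. Under the Hopf algebra isomorphisms $H^*(\Gr_G)\simeq U(\geh^\vee_F)$ and $H_*(\Gr_G)\simeq\OO(X)$, the cap-product pairing on the left becomes the canonical pairing $\ip{u,f}=(u\cdot f)(e)$ between left-invariant differential operators on $X$ and regular functions on $X$, evaluated at the identity $e\in X$. Applied to the restriction $f_v|_X$ of the matrix coefficient $f_v(x):=\ip{x\cdot v,v^-_\lambda}$, this gives $\ip{u,f_v|_X}=\ip{u\cdot v,v^-_\lambda}$, which is exactly the right-hand side of the proposition. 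So the task reduces to identifying the function on $X$ corresponding to $[\MV_{\lambda,v}]$ under Ginzburg's isomorphism with $f_v|_X$.

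I would establish this identification via the geometric Satake correspondence. Using the projection formula along $i:\overline{\Gr}_\lambda\hookrightarrow\Gr_G$, the pairing $\ip{u,[\MV_{\lambda,v}]}$ equals $\ip{i^*u,[\MV_{\lambda,v}]_{\overline{\Gr}_\lambda}}$, a pairing localized on the single $G(\OO)$-orbit closure $\overline{\Gr}_\lambda$. Under Satake, $IH^*(\overline{\Gr}_\lambda)\simeq V_\lambda$, the class $[\MV_{\lambda,v}]_{\overline{\Gr}_\lambda}$ corresponds to $v$ in the MV-basis, and the restriction map $i^*$ factors through $H^*(\overline{\Gr}_\lambda)\to IH^*(\overline{\Gr}_\lambda)\simeq V_\lambda$, sending $1\in H^0$ to the distinguished $IH$-fundamental class. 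Since $H^*(\Gr_G)\simeq U(\geh^\vee_F)$ acts naturally on all the fibers of Satake, the image of $1$ must be $v^+_\lambda$, so $i^*u$ corresponds to $u\cdot v^+_\lambda$ in $V_\lambda$.

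The remaining step is to identify the resulting $IH$-pairing on $V_\lambda$ with the Shapovalov form paired against $v^-_\lambda$. The key geometric input is that the unique zero-dimensional MV-cycle $\MV_{\lambda,w_0\lambda}$ corresponds to the point class, and under $IH$-Poincar\'e duality on $\overline{\Gr}_\lambda$ pairing with it extracts the coefficient of the lowest weight vector $v^-_\lambda$; combined with the invariance of the Shapovalov form under $(-)^T$, this turns $\ip{u\cdot v^+_\lambda, v}_\text{IH}$ into $\ip{u\cdot v, v^-_\lambda}$. The main obstacle is pinning down the normalization --- in particular matching the chosen principal nilpotent $F = \sum(e_i^\vee)^*$ (which determines the distinction between $B^+$ and $B^-$, and thereby the appearance of $v^-_\lambda$ rather than $v^+_\lambda$ in the formula). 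A clean way to fix the normalization is to verify the base case $u=1$: the LHS gives $\ip{1,[\MV_{\lambda,v}]}=\ep([\MV_{\lambda,v}])$, which equals $1$ if $v=v^-_\lambda$ (as $\MV_{\lambda,w_0\lambda}$ is a point) and $0$ on the other MV-basis vectors, while the RHS gives $\ip{v,v^-_\lambda}$, which is the same by the Shapovalov normalization. Equivariance of both sides under the $U(\geh^\vee_F)$-action then extends this match to general $u$.
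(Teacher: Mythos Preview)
Your approach is essentially the same as the paper's --- both identify the pairing $\ip{u,[\MV_{\lambda,v}]}$ via geometric Satake, the compatibility of the $H^*(\Gr_G)$-action on $IH^*(\overline{\Gr}_\lambda)$ with the $U(\geh^\vee_F)$-action on $V_\lambda$ (Ginzburg's Theorem~1.7.6), and the fact that push-forward to a point extracts the $v^-_\lambda$-coefficient.  Your first paragraph, rephrasing the goal as identifying $[\MV_{\lambda,v}]$ with the matrix coefficient $x\mapsto\ip{x\cdot v,v^-_\lambda}$ on $X$, is a nice reformulation and matches the paper's citation of \cite[Proposition~1.9]{Gin:GS}.

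Where your route is more circuitous is in paragraphs~2--3.  You pass $u$ through the map $H^*(\Gr_G)\to IH^*(\overline{\Gr}_\lambda)$, landing on $u\cdot v^+_\lambda$, and then need to unwind an ``IH-pairing'' $\ip{u\cdot v^+_\lambda,v}_{IH}$ back to $\ip{u\cdot v,v^-_\lambda}$.  The paper instead uses the map going the other way, $IH^*(\overline{\Gr}_\lambda)\to H_*(\overline{\Gr}_\lambda)$, which sends $[\MV_{\lambda,v}]_{IH}\mapsto[\MV_{\lambda,v}]$ and intertwines the $H^*(\Gr_G)$-module structure with cap product.  Then $u\cap[\MV_{\lambda,v}]$ is directly the image of $u\cdot v$, and push-forward gives $\ip{u\cdot v,v^-_\lambda}$ in one step.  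Your invocation of ``invariance of the Shapovalov form under $(-)^T$'' is not the right mechanism here: $u\in U(\geh^\vee_F)$ is lower-triangular, so $u^T$ leaves $\geh^\vee_F$, and the identity you want really comes from the Frobenius property of the Poincar\'e pairing (i.e.\ $\ip{u\cdot a,b}=\ip{a,u\cdot b}$ for the $H^*$-module structure on $IH^*$), not from the Shapovalov adjoint.  The base-case check $u=1$ and the equivariance argument at the end are correct and do pin down the normalization.
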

\begin{proof}
The argument is essentially the same as \cite[Proposition
1.9]{Gin:GS}; the main difference is that in our conventions $u$ is lower unipotent, 
rather than upper unipotent, however accordingly $\overline{\Gr_\lambda}$ is 
in our conventions the MV-cycle representing the 
highest weight vector, whereas it is the lowest weight 
vector in \cite{Gin:GS}.  So the difference is that everywhere
the roles of $B^+$ and $B^-$ are interchanged. 
 By \cite[Theorem 1.7.6]{Gin:GS}, the action of
$u\in U(\geh^\vee_F)$ on $V_\lambda$ is compatible with the action
of the corresponding element in $H^*(\Gr_G)$ on $IH^*(\overline{\Gr}_\lambda)$. 
Under \eqref{E:IC}, the vector $v$ is sent to $[\MV_{\lambda,v}]_{IH}$
which maps to the fundamental class $[\MV_{\lambda,v}]$ under the natural 
map from the intersection cohomology $IH^*(\overline{\Gr}_\lambda)$
to the homology $H_*(\overline{\Gr}_\lambda)$.
Also, under the fundamental class map the
action of $H^*(\Gr_G)$
on $IH^*(\overline{\Gr}_\lambda)$ is sent to
the cap product of $H^*(\Gr_G)$ on $H_*(\overline{\Gr}_\lambda)$.
Finally, pushing forward to a point is the same as pairing with
$v^-_\lambda$ (in our conventions). So we get the identity
$$
\ip{u,[\MV_{\lambda,v}]}= \pi_*(u\cap [\MV_{\lambda,v}]) = \ip{v^-_\lambda,  u\cdot v}.
$$
where $\pi:X\to\{\, pt\}$.
\end{proof}

\subsection{Schubert basis}
We have
$$
H_*(\Gr_G) = \bigoplus_{x \in W_\af^-} \C \cdot \xi_x, \ \ \ \ H^*(\Gr_G)
= \bigoplus_{x \in W_\af^-} \C \cdot \xi^x,
$$
where the $\xi_w$ are the fundamental classes $[X_w]$ of the
Schubert varieties, and $\{\xi^w\}$ is the cohomology basis (dual
under the cap product).  Suppose $\lambda$ is dominant, then we also have
\begin{equation*}
H_*(\overline{\Gr}_\lambda)=\bigoplus_{\small\begin{matrix} x \in W_\af^- \\ x\le t_{-\lambda} \end{matrix}} \C \cdot \xi_x,
\end{equation*}
because of \eqref{e:GrLambdaIsSchub} and the decomposition of $X_{\lambda}$ into Schubert cells.

By Ginzburg/Peterson's isomorphism, we will often
think of a Schubert basis element $\xi_w$ as a function on $X$.  
The Schubert basis of $H_*(\Gr_G)$ has the following factorization property:

\begin{prop}[{\cite{Pet:QCoh,LaSh:QH}}]
Suppose $wt_\nu, t_\mu \in W_\af^-$.  Then
$\xi_{wt_\nu}\xi_{t_\mu} = \xi_{wt_{\nu+\mu}}$.
\end{prop}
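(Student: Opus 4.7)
The plan is to work inside the Kostant--Kumar affine nil-Hecke algebra $\mathbb{A}$ associated to $W_\af$, following Peterson's strategy \cite{Pet:QCoh} as completed by Lam--Shimozono \cite{LaSh:QH}.

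First, recall that $\mathbb{A}$ contains a commutative subalgebra $\mathbb{B}$ (the centralizer of a subring of scalars) that is canonically isomorphic to $H_*(\Gr_G)$ as a Hopf algebra, and the Schubert basis $\{\xi_x\}_{x \in W_\af^-}$ corresponds to a distinguished ``$j$-basis'' $\{j_x\}$ of $\mathbb{B}$. Each $j_x$ has an expansion $j_x = A_x + \sum_{y > x,\, y \in W_\af^-} c^y_x A_y$ in the nil-Hecke basis $\{A_w\}_{w \in W_\af}$, with scalar coefficients $c^y_x$, and this expansion characterizes $j_x$ uniquely in $\mathbb{B}$.

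Next, I would verify the combinatorial length-additivity: if $wt_\nu, t_\mu \in W_\af^-$, then $wt_{\nu+\mu} \in W_\af^-$ with $\ell(wt_{\nu+\mu}) = \ell(wt_\nu) + \ell(t_\mu)$. Antidominance of $\mu$ and $\nu$ (forced by membership in $W_\af^-$) makes $\nu+\mu$ antidominant, and the condition $x s_i > x$ for $i \in I$ characterizing $W_\af^-$ is preserved; length-additivity then gives $A_{wt_\nu} \cdot A_{t_\mu} = A_{wt_{\nu+\mu}}$ in $\mathbb{A}$.

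Finally, combine these ingredients: the product $j_{wt_\nu} \cdot j_{t_\mu}$ lies in $\mathbb{B}$ (as $\mathbb{B}$ is a subalgebra), and its expansion in the $A_w$-basis has leading term $A_{wt_{\nu+\mu}}$ by the length additivity above. The uniqueness characterization of the $j$-basis then forces $j_{wt_\nu} \cdot j_{t_\mu} = j_{wt_{\nu+\mu}}$, which translates back through Peterson's isomorphism to the desired identity $\xi_{wt_\nu}\xi_{t_\mu} = \xi_{wt_{\nu+\mu}}$. The principal obstacle is the identification, used in the first step, of the geometrically defined Schubert basis with Peterson's algebraically defined $j$-basis; Peterson sketched this originally, while a complete proof, via the correspondence with quantum Schubert calculus of $G/B$, is the main content of \cite{LaSh:QH}. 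Granted that identification, the factorization itself reduces to the combinatorial argument above, or equivalently to the trivial multiplicativity $q^\nu \cdot q^\mu = q^{\nu+\mu}$ of quantum parameters in $qH^*(G/B)$.
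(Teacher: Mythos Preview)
The paper does not prove this proposition; it is simply quoted from \cite{Pet:QCoh,LaSh:QH}.  Your outline is essentially Peterson's strategy as carried out in those references---realize $H_*(\Gr_G)$ as the Peterson subalgebra $\mathbb{B}$ of the affine nil-Hecke ring, identify the Schubert classes with the $j$-basis, and reduce the factorization to length-additivity of $wt_\nu \cdot t_\mu$ in $W_\af$---so there is nothing in the present paper to compare against.

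One correction to your sketch: the expansion of $j_x$ in the $A_w$-basis is \emph{not} supported on $W_\af^-$; a general $j_x$ involves many non-Grassmannian $A_w$'s (already $j_{s_0}$ does).  The characterization used in \cite{LaSh:QH} is rather that $j_x$ is the unique element of $\mathbb{B}$ whose $A_y$-coefficient equals $\delta_{x,y}$ for $y$ ranging over $W_\af^-$ (equivalently, the projection $\mathbb{B}\to\bigoplus_{y\in W_\af^-}\C\,A_y$ is an isomorphism).  With this corrected characterization the argument goes through: one must check not merely a ``leading term'' but that the $A_y$-coefficient of $j_{wt_\nu} j_{t_\mu}$ vanishes for every $y\in W_\af^-$ with $y\neq wt_{\nu+\mu}$, which follows from the coset structure of $W_\af/W$ together with the length-additivity you state.

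Your closing remark that the identity is ``equivalently'' the trivial multiplicativity $q^\nu q^\mu = q^{\nu+\mu}$ via Theorem~\ref{t:PetIsoLoop} is circular as a proof: the argument in \cite{LaSh:QH} establishing that isomorphism already uses the factorization property.  It is a valid consistency check once both results are in hand, but not an independent route.
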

We remark that if $wt_\nu \in W_\af^-$, then necessarily $\nu$ is anti-dominant.

\section{The quantum cohomology ring of $G/P$}

\subsection{The usual cohomology of $\mathbf{G/P}$ and its Schubert
basis~}\label{s:ClassCoh} For our purposes it will suffice to
take homology or cohomology with complex coefficients, so
$H^*(G/P)$ will stand for $H^*(G/P,\C)$. By the well-known result of C.~Ehresmann,
the singular homology of $G/P$ has a
basis indexed by the elements $w\in W^P$ made up of the
fundamental classes of the Schubert varieties,
\begin{equation*}
X^P_w:=\overline{(B^+wP/P)}\subseteq G/P.
\end{equation*}
Here the bar stands for (Zariski) closure. Let $\si^P_{w}\in
H^*(G/P)$ be the Poincar\'e dual class to $[X^P_w]$. Note
that $X^P_w$ has complex codimension $\ell(w)$ in $G/P$ and
hence $\sigma^P_w$ lies in $H^{2\ell(w)}(G/P)$. The set
$\{\si^P_{w}\ |\ w\in W^P\}$ forms a basis of $H^*(G/P)$ called
the Schubert basis. The top degree cohomology of $G/P$ is spanned
by $\si^P_{w_0^P}$ and we have
the Poincar\'e duality pairing
\begin{equation*}
 H^*(G/P)\x H^*(G/P)\To \C ,\qquad(\si,\mu)\mapsto \left<\si\cup \mu\right>
\end{equation*}
which may be interpreted as taking $(\si,\mu)$ to the coefficient
of $\si^P_{w^P_0}$ in the basis expansion of the product $\si\cup
\mu$. For $w\in W^P$ let $PD(w)\in W^P$ be the minimal length
coset representative in $w_0wW_P$. Then this pairing is
characterized by
\begin{equation*}
\left <\si^P_w\cup \si^P_v\right>=\delta_{w, PD(v)}.
\end{equation*}

\subsection{The quantum cohomology ring $\mathbf{qH^*(G/P)}$}
The (small) quantum cohomology ring $qH^*(G/P)$ is a deformation of
the usual cohomology ring by $\C[q^P_1,\dotsc, q^P_k]$, where $k=\dim H^2(G/P)$,
with structure constants defined by $3$-point genus $0$ Gromov-Witten
invariants.  For more background on quantum cohomology, see \cite{FuWo:SchubProds}.

We have
$$
qH^*(G/P) = \oplus_{w \in W^P} \C[q^P_1,\dotsc, q^P_k] \cdot \si^P_w
$$
where $\si^P_w$ now (and in the rest of the paper) denotes the quantum Schubert class.  The quantum
cup product is defined by
\begin{equation*}
\sigma^P_v\cdot \sigma^P_w=\sum_{\begin{smallmatrix}u\in W^P\\ \mathbf
d\in \mathbb N^k
\end{smallmatrix} }\left<\si_u^P,\si_v^P,\si_w^P\right>_{\mathbf
d}\ \mathbf q^\mathbf d\si^P_{PD(u)},
\end{equation*}
where $\mathbf
q^\mathbf d$ is multi-index notation for $\prod_{i=1}^k
q_i^{d_i}$, and the $\left<\si_u^P,\si_v^P,\si_w^P\right>_{\mathbf d}$
 are genus $0$, $3$-point Gromov-Witten invariants.  These enumerate rational curves in $G/P$, with a fixed degree determined by $\mathbf d$, which pass through generaic translates of three Schubert varieties.  In particular, $\left<\si_u^P,\si_v^P,\si_w^P\right>_{\mathbf d}$ is a  nonnegative integer.

The quantum cohomology ring $qH^*(G/P)$ has an analogue of the
Poincar\'e duality pairing which may be defined as the symmetric $\C[q^P_1,\dotsc,
q^P_k]$-bilinear pairing
\begin{equation*} qH^*(G/P)\x qH^*(G/P)\To
\C[q^P_1,\dotsc, q^P_k], \qquad (\si,\mu)\mapsto \left<\si\cdot
\mu\right>_{\mathbf q}
\end{equation*}
where $\left<\si\cdot \mu\right>_{\mathbf q}$ denotes the coefficient of
$\si^P_{w_0^P}$ in the Schubert basis expansion of the product
$\si \cdot \mu$.
In terms of the Schubert basis the quantum Poincar\'e duality
pairing on $qH^*(G/P)$ is given by
\begin{equation}\label{E:FW}
\left <\si^P_w\cdot\si^P_v\right>_{\mathbf q}=\delta_{w, PD(v)},
\end{equation}
where $v,w\in W^P$, and $PD:W^P\to W^P$ is the involution defined
in Section \ref{s:ClassCoh}.  Equation \eqref{E:FW} can for example be deduced from Fulton and Woodward's results on the minimal coefficient of $q$ in a quantum product.\footnote{We thank L.~Mihalcea  for pointing out that it also follows from Proposition 3.2 of ``Finiteness of cominuscule quantum $K$-theory'' by Buch, Chaput, Mihalcea, and Perrin.}

%
%
%

\section {Peterson's theory}\label{PetersonTheory}
In this section we summarize Peterson's results concerning his
geometric realizations of $qH^*(G/P)$ and their relationship with
$H_*(\Gr_G)$.

\subsection{Definition of the Peterson variety.}\label{s:Peterson}

Each $\Spec(qH^*(G/P))$ turns out to be most naturally viewed as
lying inside  the Langlands dual flag variety $G^\vee/B^\vee$, where
it appears as a stratum (non-reduced intersection with a Bruhat
cell) of one $n$-dimensional projective variety called the {\it
Peterson variety}. This remarkable fact was discovered and shown by
Dale Peterson \cite{Pet:QCoh}.

The condition
\begin{equation*}
(\Ad(g\inv)\cdot F)(X)=0\ \text{ for all $X\in[\mathfrak u^\vee_-,\mathfrak u^\vee_-]$,}
\end{equation*}
defines a closed subvariety of $G^\vee$ invariant under right
multiplication by $B_-^\vee$. Thus they define a closed subvariety of
$G^\vee/B_-^\vee$. This subvariety $\Y$ is the {\it Peterson variety}
for $G$. Explicitly we have
\begin{equation*}
\Y=\left\{gB_-^\vee\in G^\vee/B_-^\vee\ \left |\ \Ad(g\inv)\cdot F \in [\mathfrak u^\vee_-,\mathfrak u^\vee_-]^\perp \right.\right\}.
\end{equation*}
For any parabolic subgroup $W_P\subset W$ with longest element
$w_P$ define $\Y_P$ as non-reduced intersection,
\begin{equation*}
\Y_P:= \Y\x_{G^\vee/B_-^\vee} \left(B_+^\vee w_P B_-^\vee/B_-^\vee\right ).
\end{equation*}

\begin{rem}
For $P=B$ we have a map
\begin{equation}
\label{e:IsoTodaLeaf}
\mathcal Y_B\to\mathcal A_G\ :\quad u B^\vee_- \mapsto u\inv \cdot F,
\end{equation}
where $\mathcal A_G\subset (\mathfrak g^\vee)^*$ is
the degenerate leaf of the Toda lattice.  
This 
map is an isomorphism
as follows from classical work of Kostant \cite{Kos:Toda}.  Kostant also showed that
$\mathcal Y_B$ is irreducible \cite{Kos:QCoh}.

 The isomorphism between $qH^*(G/B)$ and the functions on the degenerate leaf of the Toda lattice was 
established by B.~Kim \cite{Kim}  building on \cite{GiKi:FlTod}.
  \end{rem}

\subsection{Irreducibility of $\mathcal Y$.}\label{s:Peterson}

It is not immediately obvious from the above definition that the  Peterson variety $\mathcal Y$
is irreducible. In other words apart from the the closure of $\mathcal Y_B$
it could a priori contain some other irreducible components coming from intersections with other Bruhat cells. 
We include a sketch of proof (put together from \cite{Pet:QCoh}) that this doesn't happen, 
and that therefore $\mathcal Y$ is irreducible, $n$-dimensional and equal to the closure of $\mathcal Y_B$. 
Namely we have the following proposition.

\begin{prop}[Dale Peterson]  
If $w=w_P$, the longest element in $W_P$ for some parabolic subgroup $P$, then $\mathcal Y\cap B^\vee_+ w B^\vee_-/B^\vee_-$ is nonempty and  of
dimension $|I^P|$. Otherwise $w\inv\cdot (-\Pi^\vee)\not\subset \Delta^\vee_-\cup\Pi^\vee$
and  $\mathcal Y\cap B^\vee_+ w B^\vee_-/B^\vee_-=\emptyset$. 
\end{prop}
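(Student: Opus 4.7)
The plan is to parametrize each Bruhat cell $B^\vee_+ w B^\vee_-/B^\vee_-$, extract from the Peterson defining equation a leading-order weight constraint that forces $w$ to be the longest element of some standard parabolic $W_P$, and then produce an explicit point and invoke Peterson's identification $\mathcal Y_P\cong \Spec(qH^*(G/P))_{\mathrm{loc}}$ for the dimension count.

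First I would parametrize the cell by $u\dot w B^\vee_-$ with $u$ ranging over $U^\vee_+\cap \dot w U^\vee_+ \dot w\inv$, so that the defining condition of $\mathcal Y$ reads
$$
\Ad^*(\dot w\inv)\,\Ad^*(u\inv)\, F \in [\mathfrak u^\vee_-,\mathfrak u^\vee_-]^\perp.
$$
The annihilator $[\mathfrak u^\vee_-,\mathfrak u^\vee_-]^\perp\subset (\mathfrak g^\vee)^*$ has $T^\vee$-weight support $\{0\}\cup(-\Delta^\vee_+)\cup\Pi^\vee$, so the ``forbidden'' weight directions are exactly the non-simple positive coroots. Since $\Ad^*(u\inv)$ perturbs $F$ only by weight vectors of weights $-\alpha^\vee_i+\gamma$ with $\gamma$ a nonnegative sum of positive roots, $\Ad^*(\dot w\inv)\Ad^*(u\inv) F$ decomposes into weights $-w\inv\alpha^\vee_i+w\inv\gamma$. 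As distinct weight spaces cannot cancel, the leading ($\gamma=0$) summands $-w\inv\alpha^\vee_i$ must themselves avoid $\Delta^\vee_+\setminus\Pi^\vee$. This forces the necessary condition $w\inv(-\Pi^\vee)\subset \Delta^\vee_-\cup \Pi^\vee$, so whenever this fails the intersection is empty---precisely the ``otherwise'' half of the proposition.

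Next I would verify the standard characterization that this necessary condition is equivalent to $w=w_P$ for the parabolic $P$ with $I_P:=\{i : w\inv(-\alpha^\vee_i)\in \Pi^\vee\}$. The forward direction is the classical fact that $w_P$ negates $\Delta^\vee_{P,+}$ and permutes $\Delta^\vee_+\setminus \Delta^\vee_{P,+}$; the converse is a short induction on $\ell(w)$, peeling off a simple reflection $s_j$ with $j\in I_P$ and using $\Delta^\vee_{P,+}\subseteq \mathrm{Inv}(w\inv)$ to match lengths.

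Finally, for $w=w_P$ the specific point $\dot w_P B^\vee_-$ already lies in $\mathcal Y$ (the leading-weight check at $u=1$ succeeds, as its weight support is in $\Pi^\vee\cup(-\Delta^\vee_+)$), giving nonemptiness. For the dimension I would invoke Peterson's isomorphism $\mathcal Y_P\cong \Spec(qH^*(G/P))_{\mathrm{loc}}$: since $qH^*(G/P)$ is a free $\C[q^P_1,\dots,q^P_{|I^P|}]$-module of finite rank, its localized spectrum has pure dimension $|I^P|$. The main obstacle I anticipate is a direct (Peterson-free) dimension count: the naive Peterson condition imposes $|\Delta^\vee_+|-n$ equations on the $(|\Delta^\vee_+|-\ell(w_P))$-dimensional cell, but only $|\Delta^\vee_+|-|\Delta^\vee_{P,+}|-|I^P|$ of them are independent, and disentangling which of the ``$i\notin I_P$'' weight equations are redundant (their leading weight already lies in $\Delta^\vee_-$) is combinatorially subtle; routing through quantum cohomology sidesteps this entirely.
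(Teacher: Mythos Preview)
Your treatment of the emptiness direction and of the characterization $w=w_P\Leftrightarrow w^{-1}(-\Pi^\vee)\subset\Delta^\vee_-\cup\Pi^\vee$ is correct and matches the paper (which compresses your weight-by-weight analysis to the phrase ``clearly $w^{-1}\cdot F$ needs to lie in $b_\Pi$''). The explicit point $\dot w_P B^\vee_-$ for nonemptiness is also fine.

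The gap is the dimension count. Invoking $\mathcal Y_P\cong\Spec\,qH^*(G/P)$ is circular here: the paper places this proposition \emph{before} the quantum-cohomology identifications of Theorem~\ref{t:Pet}, precisely in order to establish the stratification and irreducibility of $\mathcal Y$ as input to Peterson's theory; Theorem~\ref{t:Pet} is only cited, not proved, and in Peterson's own development the geometry of the $\mathcal Y_P$ comes first. You yourself flag the direct count as ``the main obstacle'' and then route around it by assuming the downstream result. The paper's direct argument avoids your anticipated equation-counting entirely. Instead of parametrizing the cell minimally, use the redundant map $U^\vee_+\to B^\vee_+\dot w_P B^\vee_-/B^\vee_-$, $u\mapsto u\dot w_P B^\vee_-$, which is a $\C^{\ell(w_P)}$-fibration; the Peterson condition pulls back to $U_P=\psi^{-1}\bigl((w_P\cdot b_\Pi)|_{\mathfrak u^\vee_-}\bigr)$ for $\psi:U^\vee_+\to(\mathfrak u^\vee_-)^*$, $u\mapsto(u^{-1}\cdot F)|_{\mathfrak u^\vee_-}$. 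The key point is that $\psi$ is \emph{finite}: it is $\C^*$-equivariant for positive gradings on both sides, and a Bruhat-cell argument shows $\psi^{-1}(0)=\{e\}$. Finiteness between equal-dimensional affine spaces gives surjectivity and dimension preservation on preimages, so $\dim U_P=\dim(w_P\cdot b_\Pi)|_{\mathfrak u^\vee_-}=|I^P|+\ell(w_P)$ and hence $\dim\mathcal Y_P=|I^P|$.
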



\bigskip
\begin{proof}[Sketch of proof]
Clearly $w\inv\cdot F$ needs to lie in $b_{\Pi}:= [\mathfrak u^\vee_-,\mathfrak u^\vee_-]^\perp$ for
$\mathcal Y\cap B^\vee_+ w B^\vee_-/B^\vee_-$ to be non-empty.  So $w\inv\cdot (-\Pi^\vee)\subset \Delta^\vee_-\cup\Pi^\vee$.
This is the case if and only if $w=w_P$ for some parabolic $P$, by a lemma from \cite{Pet:QCoh} reproduced in \cite[Lemma~2.2]{Rie:QCohGr}.

Consider the map
\begin{eqnarray*}
\psi:  U^\vee_+ &\to & (\mathfrak u^\vee_-)^*, \\
u &\mapsto & (u\inv\cdot F)|_{\mathfrak u^\vee_-}.
\end{eqnarray*}

The coordinate rings of $U_+^\vee $  and $ (\mathfrak u_-)^*$ are
polynomial rings. On $U_+^\vee$ consider the $\C^*$-action coming from conjugation 
by the one-parameter subgroup of $T^\vee$ corresponding to $\rho\in X_*(T^\vee)$. 
On $(\mathfrak u^\vee_-)^* $
let $\C^*$ act by $z\cdot X_{\alpha}=z^{<\alpha,\rho>+1}X_{\alpha}$
for $X_{\alpha}$ in the $\alpha$-weight space of $(\mathfrak u^\vee_-)^*$
and $\alpha\in\Delta^\vee_+$.
Then $\psi^*$ is a homomorphism of 
(positively) graded rings, namely it is straightforward to check that $\psi$ is $\C^*$-equivariant.

Also $\psi$ has the property that $\psi\inv(0)=\{0\}$ in terms of $\C$-valued points, or indeed over any algebraically closed field.
Peterson proves  this in \cite{Pet:QCoh} by considering the $B^\vee_-$-Bruhat decomposition intersected
with $U^\vee_+$. Namely, the only way  $\psi(u)$ can be $0$ for $u\in U^\vee_+\cap B^\vee_-\dot w B^\vee_-$
is if $w=e$, wherefore $u$ must be the identity element in $U^\vee_+$.

It follows from these two properties that $\psi$ is finite.  
For example by page 660 
in Griffiths-Harris and using the $\C^*$-action to go from the statement
locally around zero, to a global statement, or by another proposition in Peterson's lectures \cite{Pet:QCoh}. 

Let
$$
U_P:=\psi\inv((w_P\cdot  b_{\Pi})|_{\mathfrak u^\vee_-} ) =\{u\in U^\vee_+\ |\ (u\inv\cdot F)|_{\mathfrak u^\vee_-}\in   
(w_P\cdot b_{\Pi})|_{\mathfrak u^\vee_-}\}.
$$
Since  $u\inv\cdot F\in F+\mathfrak h + (\mathfrak u^\vee_-)^*$
for $u\in U^\vee_+$ and $F+\mathfrak h\subset w_P\cdot b_{\Pi}$, we can drop the restriction  to ${\mathfrak u^\vee_-}$ 
on both sides of the condition above, and we have a projection map
$$
U_P=\{u\in U^\vee_+\ |\ u\inv\cdot F\in   
w_P\cdot b_{\Pi}\}=\{u\in U^\vee_+\ |\ w_P\inv u\inv\cdot F\in b_{\Pi}\}\to \mathcal Y\cap B^\vee_+ w_P B^\vee_-/B^\vee_-
$$
taking $u\in U_P$ to $u w_P B^\vee_-$, which is a fiber bundle with fiber $\cong \C^{\ell(w_P)}$.

Since $\psi$ is finite the dimension of $U_P$ is equal to the dimension of the subspace 
$(w_P\cdot  b_{\Pi})|_{\mathfrak u^\vee_-}$ inside ${(\mathfrak u^\vee_-)^*} $. This dimension is just $|I^P|+\ell(w_P)$, by
looking at the weight space decomposition. 
So $\mathcal Y\cap B^\vee_+w_P B^\vee_-/B^\vee_-$ has dimension $| I^P|$.
\end{proof}

\subsection{Geometric realization of $qH^*(G/P)$}

Recall the stabilizer $X$ of the principal nilpotent $F$, which
is an $n$-dimensional abelian subgroup of $U_-^\vee$.
Using an idea of Kostant's 
\cite[page 304]{Kos:qToda}, the Peterson variety may also be understood as a compactification
of $X$. Namely,
$$
\Y=\overline{X\dot w_0 B^\vee_-/B^\vee_-}\subset G^\vee/B^\vee_-.
$$
For the parabolic $P$ let
$$
\Y_P^*:=\Y_P\x_{G^\vee/B_-^\vee} X\dot w_0 B^\vee_-/B^\vee_-=(X\x_{G^\vee} B^\vee_+\dot w_P\dot w_0 B^\vee_+)\dot w_0/B^\vee_- \cong X\x_{G^\vee}B^\vee_+\dot w_P\dot w_0 B_+^\vee,
$$
or equivalently,
$$
\Y_P^*=\Y_P\x_{G^\vee/B^\vee_-}B^\vee_-\dot w_0 B^\vee_-/B^\vee_-.  
$$
We define
$$
X_P:=X\x_{G^\vee}B^\vee_+\dot w_P\dot w_0 B_+^\vee,
$$
so that the above is an isomorphism $\Y_P^*\cong X_P$.

\begin{thm}[Dale Peterson]\label{t:Pet}\
\begin{enumerate}
\item The $\mathcal Y_P$ give rise to a decomposition
\begin{equation*}
\Y(\C)=\bigsqcup_{P} \Y_P(\C).
\end{equation*}
\item
For $P=B$  we have
\begin{equation}\label{e:PetIso1}
qH^*(G/B)\overset\sim\to \mathcal O(\Y_B),
\end{equation}
via the isomorphism \eqref{e:IsoTodaLeaf} of $\Y_B$ with the degenerate leaf
of the Toda lattice of $G^\vee$.
\item If $w\in W^P$, then the the function
$S^w\in \mathcal O(\Y_B)$ associated
to the Schubert class $\sigma_w$ defines a
regular function $S_P^w$ on $\mathcal O(\Y_P)$. There is
an (uniquely determined) isomorphism
$$
qH^*(G/P)\overset\sim \to \mathcal O(\Y_P)
$$
which takes $\sigma^P_w$ to $S_P^w$.
\item
The isomorphisms above restrict, to give
isomorphisms
$$
qH^*(G/P)[q_1\inv,\dotsc, q_k\inv\ ]\overset\sim\to \mathcal O(\Y_P^*).
$$
\end{enumerate}
\end{thm}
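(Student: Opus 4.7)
The proof splits into four parts matching the statement. For part (1), the plan is to intersect the Bruhat decomposition $G^\vee/B_-^\vee = \bigsqcup_w B_+^\vee w B_-^\vee/B_-^\vee$ with $\Y$, yielding the stratification $\Y(\C) = \bigsqcup_w \Y \cap B_+^\vee w B_-^\vee/B_-^\vee$; the preceding proposition shows that each non-empty stratum is $\Y_P$ for some parabolic $P \supseteq B$, which gives the claimed decomposition. For part (2), I would invoke two cited results: Kostant's result \cite{Kos:Toda} that the map \eqref{e:IsoTodaLeaf} is an isomorphism $\Y_B \xrightarrow{\sim} \A_G$, and Kim's theorem \cite{Kim} (building on \cite{GiKi:FlTod}) identifying $qH^*(G/B)$ with $\mathcal{O}(\A_G)$; their composition yields \eqref{e:PetIso1}.

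Part (3) is the substantive step and the hard part of the proof. My plan is to first establish a localized isomorphism $qH^*(G/P)[q_1\inv, \dotsc, q_k\inv] \xrightarrow{\sim} \mathcal{O}(\Y_P^*)$ and then deduce part (3) by showing the relevant functions extend regularly across the boundary. The key tool will be the Peterson--Woodward comparison formula (verified in \cite{LaSh:QH}), which relates the quantum Schubert classes in $qH^*(G/P)$ to those in $qH^*(G/B)$ after inverting quantum parameters. Transported via part (2), for $w \in W^P$ the class $\sigma_w^P$ will correspond to an explicit rational function on $\Y_B$ whose restriction to $\Y_P^*$ is regular. The main obstacle will be showing that this rational function extends regularly across the boundary to all of $\Y_P$; I would proceed by examining the local structure of $\Y$ along $\Y_P$, exploiting the fiber bundle description from the sketch of the previous proposition (where $U_P \subset U_+^\vee$ maps onto $\Y \cap B_+^\vee \dot w_P B_-^\vee/B_-^\vee$ with affine fibers $\cong \C^{\ell(w_P)}$). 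Once regularity is established, I would verify that the induced map $qH^*(G/P) \to \mathcal{O}(\Y_P)$ is an isomorphism by a dimension count ($\dim \Y_P = |I^P|$ from the previous proposition matches $\dim \Spec qH^*(G/P)$) combined with surjectivity from the fact that the $\sigma_w^P$ generate $qH^*(G/P)$ as a $\C[q_1, \dotsc, q_k]$-algebra, and injectivity from the irreducibility of $\Y$ (so that $\Y_B \subset \overline{\Y_P}$ is dense in a unique top-dimensional component).

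For part (4), the plan is to observe that once part (3) is in hand, $\Y_P^* \subseteq \Y_P$ is precisely the locus where the quantum parameters $q_1, \dotsc, q_k$ (now regarded as elements of $\mathcal{O}(\Y_P)$ via (3)) are all non-vanishing, so the isomorphism in (3) localizes to the desired isomorphism. This gives the chain $qH^*(G/P)[q_1\inv, \dotsc, q_k\inv] \xrightarrow{\sim} \mathcal{O}(\Y_P)[q_1\inv, \dotsc, q_k\inv] = \mathcal{O}(\Y_P^*)$, completing the proof.
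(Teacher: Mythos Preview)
The paper does not supply a proof of this theorem. It is stated as a result of Dale Peterson, attributed to his unpublished 1997 MIT lecture course \cite{Pet:QCoh}, and used as a black box. The only related argument the paper gives is the sketch of the preceding proposition (also Peterson's) identifying which Bruhat strata meet $\Y$; that proposition indeed feeds into part~(1) exactly as you describe, but parts~(2)--(4) are simply quoted. So there is no ``paper's own proof'' to compare your proposal against.

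That said, a few comments on your plan. Your handling of (1), (2), and (4) is fine and matches how one would assemble the statement from the cited sources. For (3), your outline is a reasonable sketch of the shape of the argument, but the step you flag as the main obstacle --- extending the rational functions $S^w$ regularly across $\Y_P \setminus \Y_P^*$ --- is genuinely the heart of Peterson's theorem and is not resolved by the fiber-bundle description of $U_P$ alone. Peterson's original approach uses his ``quantum Bruhat'' combinatorics and a careful analysis of the functions $S^w$; the verification in \cite{LaSh:QH} goes instead through the affine Grassmannian and equivariant homology. Your dimension/irreducibility argument for the isomorphism also needs care: $\Y_P$ is defined as a scheme-theoretic intersection and is not a priori reduced or irreducible, so matching dimensions and checking generators is not enough without further input. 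In short, your proposal identifies the right ingredients but leaves the substantive work of (3) as a gap that the paper itself does not fill either.
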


In particular, Theorem \ref{t:Pet}(1) gives
\begin{equation*}
X(\C)=\bigsqcup_{P} X_P(\C).
\end{equation*}

\subsection{Quantum cohomology and homology of the affine Grassmannian}
\begin{thm}[Dale Peterson] \label{t:PetIsoLoop}\
\begin{enumerate}
\item
The composition of isomorphisms
\begin{equation}\label{e:PetIsoLoopHom}
H_*(\Gr_G)[\xi_{(t_\lambda)}^{-1}] \cong \mathcal O(X_B)\cong \mathcal O(\Y_B^*)\cong qH^*(G/B)[q_1\inv,\dotsc, q_n\inv\ ]
\end{equation}
is given by
\begin{equation}\label{e:PetIsoSchub}
\xi_{wt_\lambda} \xi_{t_\mu}^{-1} \longmapsto
q_{\lambda-\mu}\sigma^B_w
\end{equation}
where $q_{\nu} = q_1^{a_1}q_2^{a_2} \cdots q_n^{a_n}$ if $\nu = a_1
\alpha_1^\vee + \cdots + a_n \alpha_n^\vee$.
\item
More generally, for an arbitrary parabolic $P$ the composition
\begin{equation}\label{e:PetIsoLoopHomP}
(H_*(\Gr_G)/J_P)[(\xi_{(\pi_P(t_\lambda))}^{-1}] \cong \mathcal O(X_P)\cong \mathcal O(\Y_P^*)\cong qH^*(G/P)[q_1\inv,\dotsc, q_k\inv\ ]
\end{equation}
is given by
\begin{equation}\label{e:PetIsoSchubP}
\xi_{w\pi_P(t_\lambda)} \xi_{\pi_P(t_\mu)}^{-1} \longmapsto
q_{\eta_P(\lambda-\mu)}\sigma^P_w
\end{equation}
where $J_P \subset H_*(\Gr_G)$ is an ideal, $\pi_P$ maps $W_\af$ to a subset $(W^P)_\af$, and $\eta_P$ is the natural projection $Q^\vee \mapsto Q^\vee/Q^\vee_P$ where $Q^\vee_P$ is the root lattice of $W_P$.
\end{enumerate}
\end{thm}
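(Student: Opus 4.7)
The plan is to assemble the isomorphism in \eqref{e:PetIsoLoopHom} by chaining together three previously established isomorphisms: the Ginzburg--Peterson identification $H_*(\Gr_G) \cong \mathcal{O}(X)$, the identification $X_P \cong \mathcal{Y}_P^*$ built into the definition of $\mathcal{Y}_P^*$, and Peterson's isomorphism $\mathcal{O}(\mathcal{Y}_P^*) \cong qH^*(G/P)[q_1^{-1},\ldots,q_k^{-1}]$ from Theorem~\ref{t:Pet}(4). The substantive content of the theorem is not the abstract chain but the explicit image of the affine Schubert basis under the composition; this is precisely the point verified in \cite{LaSh:QH}, and my plan below sketches how that verification reduces to two base cases via the factorization property of $\{\xi_x\}$.

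For part (1), I would first identify $X_B \subset X$ as the open locus where $\xi_{t_\lambda}$ is nonzero, for $\lambda$ any sufficiently negative antidominant coweight. Since Theorem~\ref{t:Pet}(1) yields a stratification $X = \bigsqcup_P X_P$ with $X_B$ open and dense, and since the translation class $\xi_{t_\lambda}$ behaves multiplicatively on $W_\af^-$ by the Proposition immediately preceding the theorem, Ginzburg--Peterson localizes to $H_*(\Gr_G)[\xi_{t_\lambda}^{-1}] \cong \mathcal{O}(X_B)$. Composing with $X_B \cong \mathcal{Y}_B^*$ and Theorem~\ref{t:Pet}(4) then gives the abstract ring isomorphism \eqref{e:PetIsoLoopHom}. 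To pin down the Schubert formula \eqref{e:PetIsoSchub}, the factorization $\xi_{wt_\nu}\xi_{t_\mu} = \xi_{wt_{\nu+\mu}}$ reduces matters to two cases: (a) $\xi_w \mapsto \sigma^B_w$ for $w \in W \subset W_\af^-$, and (b) $\xi_{t_\lambda} \mapsto q_\lambda$ for antidominant $\lambda$. Assertion (a) follows because the image of $\xi_w$ in $\mathcal{O}(\mathcal{Y}_B)$ agrees with Peterson's Schubert function $S^w$ of Theorem~\ref{t:Pet}(2). Assertion (b) is a degree and normalization argument: $\xi_{t_\lambda}$ is characterized (up to scale) as the element of $H_*(\Gr_G)$ of degree $2\operatorname{ht}(-\lambda)$ whose restriction to $X_B$ is a unit, and these properties force it to match the quantum monomial $q_\lambda$.

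For part (2), the argument generalizes with the extra step of identifying the ideal $J_P \subset H_*(\Gr_G)$ whose vanishing locus is $\overline{X_P}$. Geometrically, $J_P$ is the kernel of the restriction $\mathcal{O}(X) \twoheadrightarrow \mathcal{O}(\overline{X_P})$; combinatorially, it should be spanned by the affine Schubert classes $\xi_x$ whose indices $x \in W_\af^-$ are not of the form $\pi_P(y)$ for any $y$. Here $\pi_P: W_\af \to (W^P)_\af$ picks out the minimal coset representative with respect to $W_P$ on the finite side, while $\eta_P : Q^\vee \to Q^\vee/Q_P^\vee$ records the effective curve class in $H_2(G/P)$ obtained by projecting away the parabolic directions. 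Once $J_P$ is correctly identified, the remainder of the argument parallels part (1) verbatim. The main obstacle in both parts is controlling the Schubert basis correspondence: verifying that the abstract ring isomorphism actually carries Schubert classes to Schubert classes in the precise manner of \eqref{e:PetIsoSchub} and \eqref{e:PetIsoSchubP} requires matching the affine Chevalley formula in $H_*(\Gr_G)$ with the quantum Chevalley formula in $qH^*(G/P)$ on a set of ring generators, and this matching is the technical heart of the argument carried out in \cite{LaSh:QH}.
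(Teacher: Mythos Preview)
Your overall architecture is right and matches the paper: the theorem is attributed to Peterson, the Schubert--to--Schubert correspondence is deferred to \cite{LaSh:QH}, and the paper's only contribution is a Remark reconciling the \cite{LaSh:QH} isomorphism with the geometric one induced by $X$. However, your sketch of the reduction to base cases has genuine gaps.

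First, case (a) as written is ill-posed: $W$ is \emph{not} a subset of $W_\af^-$. The set $W_\af^-$ consists of minimal-length representatives of $W_\af/W$, so the only finite Weyl group element it contains is the identity. There is no class $\xi_w$ for a general $w\in W$. What you presumably mean is to show that $\xi_{wt_\lambda}\xi_{t_\lambda}^{-1}\mapsto\sigma^B_w$, but your justification --- that this element ``agrees with Peterson's Schubert function $S^w$'' --- is circular: by Theorem~\ref{t:Pet}(2), $S^w$ is \emph{defined} as the image of $\sigma^B_w$ under $qH^*(G/B)\cong\mathcal O(\Y_B)$, so the agreement you assert is exactly the assertion to be proved. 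Second, your degree-and-unit argument for (b) does not pin down $\xi_{t_\lambda}$ uniquely; there are many homogeneous units on $X_B$ of the correct degree.

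The paper's Remark proceeds differently and avoids both pitfalls. It identifies $\xi_{t_{-\lambda}}$ and $q_\lambda$ \emph{concretely} as the same matrix coefficient $x\mapsto\langle x\cdot v^+_\lambda,v^-_\lambda\rangle$, using Kostant's formula on the quantum side and Ginzburg's Proposition~1.9 (via $[\overline{\Gr_\lambda}]=[\MV_{\lambda,\lambda}]$) on the affine side. It then does \emph{not} try to match all $\sigma^B_w$; instead it checks only the degree-two generators $\sigma_{s_i}$, by showing via MV-cycles that $\xi_{s_it_{-\lambda}}\xi_{t_{-\lambda}}^{-1}$ is a \emph{positive integer} multiple of $\sigma_{s_i}$, and then invokes a rigidity argument: the unique quadratic relation in $qH^*(G/B)$ equating a quadratic form in the $\sigma_{s_i}$ with a linear form in the $q_i$ forces all those positive integers to be $1$. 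Since the $\sigma_{s_i}$ generate over $\C[q_i^{\pm1}]$, this suffices. Your final paragraph, pointing to the Chevalley-formula matching in \cite{LaSh:QH} as the technical heart, is accurate and is what actually carries the theorem.
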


Lam and Shimozono \cite{LaSh:QH} verified that the maps
\eqref{e:PetIsoSchub} (resp. \eqref{e:PetIsoSchubP}) are isomorphisms from $H_*(\Gr_G)[\xi_{(t_\lambda)}^{-1}]$ to
$qH^*(G/B)[q_1\inv,\dotsc, q_n\inv\ ]$ (respectively from, in the parabolic case, $(H_*(\Gr_G)/J_P)[(\xi_{(\pi_P(t_\lambda))}^{-1})]$ to $qH^*(G/P)[q_1\inv,\dotsc, q_k\inv\ ]$).  
We do not review the definitions of $J_P$ and $\pi_P$ here, but refer the reader to \cite{LaSh:QH}.

\begin{rem}
In \cite{LaSh:QH} it is not shown that the isomorphism \eqref{e:PetIsoSchub} is the one induced by the geometry of $X$.  We sketch how this can be achieved.

First, Kostant \cite[Section 5]{Kos:QCoh} 
expresses the quantum parameters as certain ratios of `chamber minors' on $X$.  Ginzburg's \cite[Proposition 1.9]{Gin:GS} also expresses the translation affine Schubert classes $\xi_{t_{-\lambda}}$ as matrix coefficients, 
since $\xi_{t_{-\lambda}}=[X_\lambda]=[\overline{Gr_\lambda}]=[MV_{\lambda,\lambda}]$.  This allows one to compare $\xi_{t_{-\lambda}}$ with $q_{\lambda}$ as functions on $X_B$, and see that they agree. Namely both are equal to $x\mapsto \left<x\cdot v^+_{\lambda},v^-_{\lambda}\right>$.

Let $\lambda=m\omega_i^\vee$ be a positive multiple of $\omega_i^\vee$ contained in $Q^\vee$. We now compare the functions $\xi_{s_it_{-\lambda}}$ and $q_{-\lambda}\sigma_{s_i}$ on $X_B$.  For the function $\sigma_{s_i}$, Kostant gives a formula in \cite[(119)]{Kos:QCoh} as a ratio of matrix coefficients on $X$.  For the function $\xi_{s_it_{-\lambda}}$, one notes that since $\lambda$ is a multiple of $\omega_i^\vee$, then
$t_{-\lambda} \in W_\af^-$ covers only $s_i t_{-\lambda}$ in the
Bruhat order of $W_\af^-$.  It follows that
$H_{2\ell(t_{-\lambda})-2}(\overline{\Gr}_{\lambda})$ is one-dimensional,
spanned by $\xi_{s_it_{-\lambda}}$.  Similarly the weight space
$V_{\lambda}(\lambda-\alpha_i^\vee)$ is one-dimensional. If we let $[\MV_{ \lambda,v}]$ be the unique MV-cycle
with $v$ of weight $\nu= \lambda-\alpha_i^\vee$, then this gives a cycle in 
homology of degree
$2(\operatorname{ht}(\lambda) + \operatorname{ht}(\nu))=4\operatorname{ht}(\lambda) - 2=2\ell(t_{-\lambda})-2$. So we have that the homology
class $[\MV_{ \lambda,v}]$  
is a positive integer multiple of the Schubert class $\xi_{s_i t_{-\lambda}}$.
 Proposition \ref{p:MVmatrix}  allows  one to write 
$[\MV_{ \lambda,v}]$ as a matrix coefficient on $X$ and compare it with Kostant's formula for $q_{\lambda}\si_{s_i}$.  
Finally we see in this way that $q_{\lambda}\si_{s_i}$ is a positive integral multiple of $ \xi_{s_i t_{-\lambda}}$  as
function on $X_B$.
  
Now we can compose this isomorphism $qH^*(G/B)[q_1\inv,\dotsc,q_n\inv]\cong\mathcal O(X_B)\cong H_*(Gr_G)[\xi_{t_\mu}\inv]$ which we have
just seen takes $\sigma_{s_i}$ to a positive integral multiple of   $ \xi_{s_i t_{-\lambda}}\xi_{t_{-\lambda}}\inv$,
with the isomorphism $ H_*(Gr_G)[\xi_{t_\mu}\inv]\to qH^*(G/B)[q_1\inv,\dotsc,q_n\inv]$ defined by \eqref{e:PetIsoSchub} going the other way.
This way we get a map  $qH^*(G/B)[q_1\inv,\dotsc,q_n\inv]\to qH^*(G/B)[q_1\inv,\dotsc,q_n\inv]$ which takes every $q_i$ to $q_i$ but
$\sigma_{s_i}$ to a positive integral multiple of $\sigma_{s_i}$. However the images of the $\sigma_{s_i}$ still need to 
obey the unique quadratic relation in the quantum cohomology ring, which identifies a quadratic form in the $\sigma_{s_i}$'s
with a linear form in the $q_i$'s. The $\sigma_{s_i}$'s cannot be rescaled by positive integer multiples and this relation
still hold, unless all of the integer factors are $1$, which means that $\sigma_{s_i}$ must go to $\sigma_{s_i}$. 

Since $qH^*(G/B)[q_1\inv,\dotsc,q_n\inv]$ is generated by
the  $\sigma_{s_i}$ as ring over $\C[q_1^{\pm 1},\dotsc q_n^{\pm 1}]$ the map  $qH^*(G/B)[q_1\inv,\dotsc,q_n\inv]\to qH^*(G/B)[q_1\inv,\dotsc,q_n\inv]$ 
considered above is the identity. Therefore \eqref{e:PetIsoSchub}  is the inverse to the map 
$qH^*(G/B)[q_1\inv,\dotsc,q_n]\cong\mathcal O(X_B)\cong H_*(Gr_G)[\xi_{t_\mu}\inv]$, and we are done.

 \end{rem}

\section{Main results}
In the rest of the paper we will be working with the $\R$-structures on all our main objects.  Since $X, \Y, H_*(\Gr_G), qH^*(G/P)$ are in fact all defined over $\Z$ there is no problem with this.  All our notations for positivity and nonnegativity refer to $\R$-points.  We now define

\begin{enumerate}
\item[(AP)] The subset
$$
X^{\af}_{>0} = \{x \in X \mid \xi_w(x) > 0 \,\text{for all}\, w \in
W_\af^-\}
$$
of {\it affine Schubert positive} elements.
\item[(MVP)] The subset
$$
X^{\MV}_{>0} = \{x \in X \mid [\MV_{\lambda,v}](x) > 0 \,\text{for
all $\MV$-cycles}\}
$$
of {\it MV-positive} elements.
\item[(TP)] The {\it totally positive subset} defined by Lusztig's theory, $$X_{>0}:=X\cap U^\vee_{-,>0}.$$

\item[(QP)]
The subset
$$
X^{\q}_{>0}:=\{x\in X_B\ | \sigma^w_B(x)>0\text{ all $w\in W$}\}
$$
of {\it quantum Schubert positive} elements defined in terms of the
quantum Schubert basis and Peterson's isomorphism
$qH^*(G/B)[q_1\inv,\dotsc, q_n\inv]\cong \mathcal O(X_B)$.
\end{enumerate}

Define the totally nonnegative part $X_{\geq 0} = X \cap U^\vee_{-,\geq 0}$ of $X$, and the affine Schubert nonnegative part $X^\af_{\geq 0}$ of $X$ as the set of points $x \in X$ such that $\xi_w(x) \geq 0$ for every affine Schubert class $\xi_w$.
We can now state our first main theorem.

\begin{thm}\label{thm:three}
The first three notions of positivity in $X$ agree:  we
have $X_{>0} = X^{\af}_{>0} = X^\MV_{>0}$, and for the fourth we have $X_{>0} \subset X^{\q}_{>0}$.  Furthermore, we have $X_{\geq 0} = \overline{X_{> 0}}  = \overline{X^{\af}_{> 0}} = X^{\af}_{\geq 0}$.
\end{thm}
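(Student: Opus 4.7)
My plan is to establish the three equalities $X^{\af}_{>0} = X^{\MV}_{>0} = X_{>0}$ by a cyclic chain of inclusions, then deduce $X_{>0} \subseteq X^{\q}_{>0}$ and the closure statements using Theorem~\ref{t:main}. For the first step $X^{\af}_{>0} \subseteq X^{\MV}_{>0}$: each MV-cycle is an irreducible subvariety of $\overline{\Gr}_\lambda$, so its fundamental class $[\MV_{\lambda,v}]$ is a non-zero effective class in $H_*(\Gr_G)$. By the Kumar-Nori positivity theorem \cite{KuNo:pos}, every effective class expands non-negatively in $\{\xi_w\}$ with at least one strictly positive coefficient; thus $\xi_w(x) > 0$ for all $w \in W_\af^-$ forces $[\MV_{\lambda,v}](x) > 0$.

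For $X^{\MV}_{>0} \subseteq X_{>0}$, the key input is Proposition~\ref{p:MVmatrix}, which via the Hopf duality $\mathcal{O}(X) \cong H_*(\Gr_G)$ identifies $[\MV_{\lambda,v}](x)$ with the matrix coefficient $\ip{x \cdot v, v^-_\lambda}$ of $V_\lambda$, viewed as a representation of the simply-connected cover of $G^\vee$. I would use the Shapovalov symmetry together with the duality $V_\lambda \simeq V_{-w_0\lambda}^*$ to recast this as a matrix coefficient of the form $\ip{v', x \cdot v^+_{\mu}}$ appearing in Proposition~\ref{P:genBZ}, and invoke Remark~\ref{rem:can} (or an analogous positivity property of MV-basis matrix coefficients in non-simply-laced types) to verify that MV-basis vectors in one-dimensional weight spaces are ``allowable'' in the sense of that proposition. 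Proposition~\ref{P:genBZ} then yields $x \in U^\vee_{-,>0}$, i.e., $x \in X_{>0}$.

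The third inclusion $X_{>0} \subseteq X^{\af}_{>0}$ closes the loop and will be the main obstacle; I would follow the strategy of \cite{Rie:QCohPFl}. For $x \in X_{>0}$ the quantum parameters $q_i(x)$, which are themselves translation affine Schubert class values, are positive by explicit chamber-minor formulas. Non-negativity of the $3$-point Gromov-Witten invariants then makes each quantum multiplication operator on $qH^*(G/B)$, specialized at these $q_i(x)$, a non-negative matrix in the Schubert basis, and the tuple $(\sigma^B_w(x))_{w \in W}$ is a simultaneous eigenvector for these operators. The Perron-Frobenius theorem supplies a unique positive such common eigenvector, and pinning $(\sigma^B_w(x))$ to coincide with it (using the real structure and the irreducibility of the Perron-Frobenius matrix) forces every $\sigma^B_w(x) > 0$; via Peterson's isomorphism (Theorem~\ref{t:PetIsoLoop}), this amounts to $\xi_w(x) > 0$ for all $w \in W_\af^-$.

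The inclusion $X_{>0} \subseteq X^{\q}_{>0}$ then follows immediately from $X_{>0} = X^{\af}_{>0}$ by Peterson's isomorphism, since each $\sigma^B_w(x)$ is a positive rational expression in the $\xi_v(x)$'s. Finally, for the closure statements, the inclusions $\overline{X_{>0}} \subseteq X_{\geq 0}$ and $\overline{X^{\af}_{>0}} \subseteq X^{\af}_{\geq 0}$ are formal (each right-hand side is closed), and $\overline{X_{>0}} = \overline{X^{\af}_{>0}}$ follows from the already-established equality of the open parts. The reverse inclusions reduce to density of $X_{>0}$ in $X_{\geq 0}$ and of $X^{\af}_{>0}$ in $X^{\af}_{\geq 0}$, which I would obtain from the parametrizations $X_{\geq 0} \simeq \R^n_{\geq 0} \simeq X^{\af}_{\geq 0}$ furnished by Theorem~\ref{t:main}.
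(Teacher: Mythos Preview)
Your cyclic strategy $X^{\af}_{>0}\subseteq X^{\MV}_{>0}\subseteq X_{>0}\subseteq X^{\af}_{>0}$ matches the paper's outline, and your first inclusion via Kumar--Nori is exactly right. But two of the remaining steps have genuine gaps.

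For $X^{\MV}_{>0}\subseteq X_{>0}$ you want to feed MV-basis vectors in one-dimensional weight spaces directly into Proposition~\ref{P:genBZ}. The trouble is that Proposition~\ref{P:genBZ} only constrains $y$ via vectors $v$ that are already known to satisfy $\ip{v,x\cdot v^+_\lambda}>0$ for all $x\in U^-_{>0}$; you have not explained why the MV-basis vector (rather than its negative) is such a $v$. Remark~\ref{rem:can} only tells you that one of $\pm v$ works, and there is no general ``MV-basis positivity'' result available in the paper. The paper resolves this sign ambiguity by a bootstrap: it first exhibits a single point $x_0=\exp(f^\vee)$ that is simultaneously totally positive (obvious) and affine Schubert positive (Lemma~\ref{l:SchubPosx}, via Lemma~\ref{l:fcheck}), hence MV-positive. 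Evaluating both $[\MV_{\lambda,\mu}]$ and the ``allowable'' matrix coefficient at this common $x_0$ pins down the scalar relating them as positive (Lemma~\ref{l:MVtoTP}), after which Proposition~\ref{P:genBZ} applies.

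Your third inclusion $X_{>0}\subseteq X^{\af}_{>0}$ does not close as written. Given $x\in X_{>0}$ with $Q=(q_i(x))\in\R^n_{>0}$, the tuple $(\sigma^B_w(x))_w$ is indeed a real simultaneous eigenvector for the multiplication operators on $R_Q$, but the Perron--Frobenius eigenspace is only one of several real eigenspaces; nothing in ``the real structure and irreducibility'' singles it out without already knowing nonnegativity, which is what you are trying to prove. The paper uses Perron--Frobenius in the opposite direction: it parametrizes $X^{\af}_{>0}$ by showing that $\pi^B:X^{\af}_{>0}\to\R^n_{>0}$ is a bijection (Section~9) and a local homeomorphism (Section~\ref{s:proof2}). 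Then $X^{\af}_{>0}\hookrightarrow X_{>0}$ is an open inclusion whose composition with $\pi^B$ is a homeomorphism onto $\R^n_{>0}$; elementary point-set topology makes $X^{\af}_{>0}$ closed in $X_{>0}$, and a direct connectedness argument for $X_{>0}$ (via the semigroup property and the contraction $u_t=t^{-\rho}ut^\rho$, Proposition~\ref{p:connected}) finishes.

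Finally, your closure argument is circular: Theorem~\ref{t:main}(1) is proved in the paper using $X_{P,>0}=X^{\af}_{P,>0}$, which is Proposition~\ref{p:closures}, itself part of the proof of the present theorem. The paper instead proves $\overline{X_{>0}}=X_{\ge 0}$ and $\overline{X^{\af}_{>0}}=X^{\af}_{\ge 0}$ directly: for $x$ in the nonnegative locus and any $u\in X_{>0}$, the path $t\mapsto x\,u_t$ lies in the positive locus for $t>0$ (by Lemma~\ref{l:semi}) and starts at $x$.
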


Therefore the first three notions of positivity are equivalent, and
the fourth is at worst weaker.  We note that by \eqref{e:PetIsoLoopHom} affine Schubert positivity is
equivalent to quantum Schubert positivity with additional positivity
of the quantum parameters.  Therefore $X^{\af}_{> 0} \subset X^{\q}_{> 0}$ is immediate.

\begin{conjecture}\label{c:QP}
We have $X_{> 0} = X^{\q}_{>0}$.
\end{conjecture}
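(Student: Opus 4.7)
\medskip

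The plan is to prove the remaining inclusion $X^{\q}_{>0} \subset X_{>0}$, since Theorem~\ref{thm:three} already gives $X_{>0} \subset X^{\q}_{>0}$. Using the chain of equalities $X_{>0} = X^{\af}_{>0}$ from Theorem~\ref{thm:three}, it suffices to show that quantum Schubert positivity implies affine Schubert positivity, i.e.\ $X^{\q}_{>0} \subset X^{\af}_{>0}$. Fix $x \in X^{\q}_{>0}$, so in particular $x \in X_B$ and $\sigma_w^B(x) > 0$ for every $w \in W$. Under Peterson's isomorphism \eqref{e:PetIsoLoopHom} together with Proposition~4.6 (the factorization $\xi_{wt_\nu}\xi_{t_\mu} = \xi_{wt_{\nu+\mu}}$), every affine Schubert class evaluated at $x$ factors, up to a $\sigma_w^B(x) > 0$, as a monomial in the quantum parameters $q_i(x)$. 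Therefore the entire content of the conjecture reduces to showing that the quantum parameters $q_i(x)$ are all strictly positive.

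The main step is then to extract positivity of the $q_i(x)$ from positivity of all $\sigma_w^B(x)$. A natural approach is a Perron--Frobenius argument in the spirit of the one used to parameterize $X^{\af}_{>0}$ in the body of the paper. For each $i \in I$, consider the matrix $M_i$ of quantum multiplication by $\sigma_{s_i}^B$ on $qH^*(G/B)$ in the quantum Schubert basis. By the quantum Chevalley formula and the nonnegativity of $3$-point Gromov--Witten invariants, the entries of $M_i$ are polynomials in $q_1,\dots,q_n$ with nonnegative integer coefficients. The hypothesis $x \in X^{\q}_{>0}$ says that the vector $v(x) = (\sigma_w^B(x))_{w \in W}$ has all entries strictly positive, and $v(x)$ is an eigenvector of each $M_i(x)$ with strictly positive eigenvalue $\sigma_{s_i}^B(x)$. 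One would then try to show that the simultaneous existence of such a strictly positive eigenvector for the commuting family $\{M_i(x)\}_{i \in I}$ is incompatible with any $q_j(x)$ being nonpositive: roughly, a negative $q_j(x)$ should force some relation among the $\sigma_w^B(x)$ with the wrong signs, violating strict positivity of $v(x)$.

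An alternative, more topological strategy would be to construct a parametrization of $X^{\q}_{>0}$ analogous to the parametrization of $X_{>0}$ by $\R^n_{>0}$ established in Theorem~1.2 (invoked as the second main theorem), and then show that this parametrization agrees with, or can only agree with, the one on $X_{>0}$. Concretely, one would use the existence result from \cite{Rie:QToda} (a totally positive point exists for every choice of positive quantum parameters) together with a uniqueness argument: if $X^{\q}_{>0}$ is connected and contains $X_{>0}$ as an open subset on which the quantum parameters are positive, then by continuity the quantum parameters cannot change sign on $X^{\q}_{>0}$, giving $q_i > 0$ throughout and hence $X^{\q}_{>0} = X^{\af}_{>0}$.

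The hard part, in either route, is essentially the same and is precisely what distinguishes quantum Schubert positivity from affine Schubert positivity: one must \emph{extract} the sign of the quantum parameters from the Schubert data alone, without being able to invoke positivity of the translation classes $\xi_{t_\nu}$ directly. In type $A$ this was achieved in \cite{Rie:QCohPFl} using the explicit Toeplitz model for $X$; in general type I expect the combinatorics of the quantum Chevalley matrices $M_i$, together with uniqueness of the Perron--Frobenius eigenvector for an irreducible nonnegative matrix, to be the crux, with the main obstacle being to rule out degenerate situations where $M_i(x)$ has mixed signs yet still admits a strictly positive eigenvector.
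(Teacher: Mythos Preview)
You have correctly identified the reduction: given Theorem~\ref{thm:three}, the conjecture is equivalent to showing that $q_i(x)>0$ for all $i$ whenever $\sigma_w^B(x)>0$ for all $w\in W$. This is exactly what the paper isolates as well.

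However, your proposal is not a proof but an outline of two possible strategies, neither of which you carry to completion---and the paper does not prove the conjecture in general either. It is stated as an open conjecture; the paper only verifies it in type~$C$ (Appendix~\ref{s:proof1}), with type~$A$ already known from \cite{Rie:QCohPFl}. So there is no ``paper's proof'' to compare against in the general case.

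Regarding your first strategy: the Perron--Frobenius argument cannot be run as you describe it. You want to apply it to the matrices $M_i(x)$, but the whole point is that you do not yet know the $q_j(x)$ are nonnegative, so $M_i(x)$ need not be a nonnegative matrix and Perron--Frobenius theory gives you nothing. The existence of a positive eigenvector for a matrix with mixed-sign entries carries no structural consequence in general; ruling out ``degenerate situations'' is not a side issue but the entire content of the conjecture. Your second strategy presupposes that $X^{\q}_{>0}$ is connected, which you do not establish and which is not obvious.

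What the paper actually does in type~$C$ is quite different in flavor: it picks, for each $i$, a specific Weyl group element $v_i$ (the longest element of $W^{P_i}$) and computes the product $\sigma_{s_i}^B\,\sigma_{v_i}^B$ explicitly via the quantum Chevalley formula. The classical terms vanish for this choice of $v_i$, and the remaining quantum terms are few enough and structured enough (e.g.\ $\sigma^{s_n}\sigma^{v_n}=q_n\sigma^{v_ns_n}$ and $\sigma^{s_i}\sigma^{v_i}=q_i\sigma^{v_is_i}+q_iq_{i+1}\cdots q_n\,\sigma^{v_ir_{\beta_i}}$ for $i<n$) that one can solve inductively for the $q_i$ as ratios of positive quantities. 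This is a type-by-type combinatorial argument, not a uniform one, and it is precisely the kind of explicit identity your proposal was hoping to avoid.
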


Conjecture \ref{c:QP} was shown in \cite{Rie:QCohPFl} for type $A$.  We will verify it in Appendix \ref{s:proof1} for type $C$.

By the isomorphism
$$
\C
[\Y^*_{P}]\cong qH^*(G/P)[(q^P_1)\inv,\dotsc,(q^P_k)\inv]
$$
from Theorem~\ref{t:Pet} combined with  $X_{P}\cong \Y^*_P$ we have a morphism
$$
\pi^P=(q_1^P,\dotsc, q_k^P): X_P \to (\C^*)^k.
$$
Let $X_{P,>0}:=X_P\cap X_{\ge 0}$. In particular
$X_{B,>0}=X_{>0}$. 
\begin{thm}\label{t:main}\
\begin{enumerate}
\item
$\pi^P$ restricts to a bijection
\begin{equation*}
\pi^P_{>0}:X_{P,>0}\to\R^k_{>0}.
\end{equation*}
\item
$X_{P,>0}$ lies in the smooth locus of $X_{P}$, and the map
$\pi^P$
is etale on $X_{P,>0}$.
\item
The maps $\pi^P_{>0}$ glue to give a homeomorphism
\begin{equation*}
\Delta_{\ge 0}: X_{\ge 0}\To \R_{\ge 0}^{n}.
\end{equation*}
\end{enumerate}
\end{thm}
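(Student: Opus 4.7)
My plan is to establish parts (1) and (2) parabolic-by-parabolic via a Perron--Frobenius argument on the specialized quantum cohomology ring of $G/P$, and then to deduce part (3) by gluing through the decomposition of Theorem~\ref{t:Pet}. The key link between Schubert positivity and total positivity throughout is the identity $X_{\geq 0}=X^{\af}_{\geq 0}$ from Theorem~\ref{thm:three}.

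Fix a parabolic $P$ and a target $q^0 \in \R^{|I^P|}_{>0}$. Let $A = qH^*(G/P)|_{q = q^0}$; this is a finite-dimensional commutative $\R$-algebra whose $\C$-spectrum is the fiber $(\pi^P)^{-1}(q^0) \subset X_P$, with each $x$ in the fiber corresponding to a character $\chi_x$ satisfying $\chi_x(\si^P_w) = \si^P_w(x)$. Choose $\tau = \sum_{i \in I^P} c_i \si^P_{s_i}$ with $c_i > 0$, and let $M$ be the matrix of multiplication by $\tau$ in the Schubert basis $\{\si^P_w\}_{w \in W^P}$. The entries of $M$ are nonnegative $\R$-linear combinations of $3$-point Gromov--Witten invariants weighted by positive monomials in $q^0$, hence are all nonnegative; by the quantum Chevalley formula the classical terms supply arrows along the Bruhat graph within each fixed degree while the quantum corrections supply transitions between degrees, which I would use to show that $M$ is primitive. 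Perron--Frobenius then delivers a simple largest eigenvalue with a unique (up to positive scaling) strictly positive eigenvector; under the spectral decomposition of $A$, this eigenvector picks out a unique character $\chi_{x_0}$ with $\si^P_w(x_0) > 0$ for all $w \in W^P$.

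To upgrade Schubert positivity at $x_0$ to total nonnegativity I would use Peterson's identification \eqref{e:PetIsoSchubP}, which writes each affine Schubert class $\xi_v$ (after inverting an appropriate $\xi_{t_\mu}$) as a monomial in the quantum parameters times a quantum Schubert class $\si^P_w$; positivity of the coordinates of $q^0$, combined with $\si^P_w(x_0) > 0$ and the positivity of $\xi_{t_0}(x_0) = 1$, then forces $\xi_v(x_0) > 0$ for all $v$, and Theorem~\ref{thm:three} gives $x_0 \in X_{\geq 0}$, hence $x_0 \in X_{P,>0}$. Surjectivity of $\pi^P_{>0}$ follows, and injectivity is the uniqueness clause of Perron--Frobenius. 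For part (2), simplicity of the Perron--Frobenius eigenvalue forces $\chi_{x_0}$ to be a reduced isolated point of $\Spec A$, whence $x_0$ is smooth in $X_P$ and $\pi^P$ is \'etale there.

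For part (3), the decomposition $X(\C) = \bigsqcup_P X_P(\C)$ intersects $X_{\geq 0}$ in the disjoint union $\bigsqcup_P X_{P,>0}$, and via the $\pi^P_{>0}$ this matches the natural stratification $\R^n_{\geq 0} = \bigsqcup_{S \subseteq I} \R^{|S|}_{>0}$ by which coordinates vanish, the $P$-stratum on the left mapping to the stratum indexed by $S = I^P$ on the right. The resulting bijection $\Delta_{\geq 0}$ is continuous since the combined quantum parameters are regular on $X$, and continuity of the inverse follows by a compactness argument: $X_{\geq 0}$ is closed in $X$, Schubert values are bounded on preimages of compact sets, and any limit point of Perron--Frobenius points along a sequence of fibers is by uniqueness the Perron--Frobenius point of the limit fiber. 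The principal obstacle throughout is establishing the primitivity (or at least irreducibility) of $M$ needed for Perron--Frobenius to deliver a \emph{unique} strictly positive eigenvector; in type $A$ this is handled via explicit quantum Schubert calculus, while in general type it requires combining classical Bruhat-graph connectivity with the existence of at least one nonzero low-degree quantum Gromov--Witten invariant in every simple direction.
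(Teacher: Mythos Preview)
Your overall architecture---Perron--Frobenius on a specialized quantum multiplication operator to produce a unique Schubert-positive point in each fiber, simplicity of the Perron eigenvalue for smoothness and \'etaleness, then gluing over parabolics---is exactly the paper's strategy. But two of your steps have genuine gaps, and in both cases the paper's fix is substantively different from what you sketch.

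\textbf{Irreducibility of $M$.} You take $\tau=\sum_{i\in I^P} c_i\,\sigma^P_{s_i}$ and admit that primitivity of $M_\tau$ is ``the principal obstacle,'' offering only a vague appeal to quantum-Bruhat-graph connectivity. The paper sidesteps this entirely: it multiplies by $\sigma=\sum_{w\in W^P}\sigma^P_w$, the sum of \emph{all} Schubert classes. Irreducibility of $M_\sigma$ is then proved in two short moves. First, any invariant subspace spanned by a proper subset $V\subset W^P$ must contain $1$: if $v\in V$ but $1\notin V$, then $\langle \sigma^P_w\cdot\sigma^P_v\cdot\sigma^P_{w_0^P}\rangle_Q=0$ for all $w$, hence $\sigma^P_v\cdot\sigma^P_{w_0^P}=0$ in $qH^*(G/P)$, contradicting Fulton--Woodward's theorem that no quantum product of Schubert classes vanishes. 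Second, since $\sigma$ contains every $\sigma^P_{s_i}$, classical Chevalley lets you walk down from any $w\notin V$ to $1$, a contradiction. So Fulton--Woodward, not quantum-Bruhat connectivity, is the key input.

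\textbf{Gluing in part (3).} You assert that ``the combined quantum parameters are regular on $X$'' and glue the $\pi^P_{>0}$ directly. This is false: the $q^P_i$ are only defined on $X_P$ (they are ratios of affine Schubert classes, inverted in $\mathcal O(X_P)$), and for different $P$ they are different functions, so there is no a~priori reason the glued map is continuous across strata. The paper instead defines $\Delta_{\ge 0}=(\Delta_1,\dotsc,\Delta_n)$ with $\Delta_i=\xi_{t_{m_i\omega_i^\vee}}$, genuine regular functions on all of $X$. On each stratum $X_{P,>0}$ one checks $(\Delta_{i_1},\dotsc,\Delta_{i_k})$ differs from $\pi^P_{>0}$ by a homeomorphism of $\R^k_{>0}$, so the stratum-wise bijections assemble into a globally continuous bijection. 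Continuity of the inverse is then immediate because $\Delta:X\to\C^n$ is a finite morphism, hence closed in the Euclidean topology, and $X_{\ge 0}$ is closed in $X$; your sequential compactness argument is unnecessary once you use the right map.
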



\section{One direction of Theorem \ref{thm:three}}
The main goal of this section is to show that
$$
X^{\af}_{>0} \subseteq X^\MV_{>0} \subseteq X_{>0}.
$$

\begin{lem}\label{l:AFtoMV} If $x\in X$ is affine Schubert positive, then it is MV-positive.
\end{lem}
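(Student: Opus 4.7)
The plan is to apply the Kumar--Nori positivity result, cited in the introduction, which says that any effective class in $H_*(\Gr_G)$ is a nonnegative combination of the Schubert classes $\xi_w$, $w \in W_\af^-$. The MV-cycle $\MV_{\lambda,v}$ is an irreducible closed subvariety of $\overline{\Gr}_\lambda \subset \Gr_G$, so its fundamental class $[\MV_{\lambda,v}] \in H_*(\Gr_G)$ is effective. Hence there exist coefficients $c_w^{\lambda,v} \geq 0$ with
\[
[\MV_{\lambda,v}] \;=\; \sum_{w \in W_\af^-} c_w^{\lambda,v}\, \xi_w
\]
in $H_*(\Gr_G)$, where the sum is finite because $[\MV_{\lambda,v}]$ lies in $H_*(\overline{\Gr}_\lambda)$, which is spanned by those $\xi_w$ with $w \leq t_{-\lambda}$.

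Next I would evaluate at $x \in X^{\af}_{>0}$. Viewing the classes as regular functions on $X$ via Ginzburg--Peterson's isomorphism $H_*(\Gr_G) \cong \OO(X)$, the above identity gives
\[
[\MV_{\lambda,v}](x) \;=\; \sum_{w \in W_\af^-} c_w^{\lambda,v}\, \xi_w(x).
\]
Each $\xi_w(x) > 0$ by the assumption that $x$ is affine Schubert positive, and every $c_w^{\lambda,v} \geq 0$ by Kumar--Nori, so $[\MV_{\lambda,v}](x) \geq 0$.

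To upgrade this to strict positivity I need to know that at least one coefficient $c_w^{\lambda,v}$ is nonzero. This follows because $\MV_{\lambda,v}$ is nonempty and irreducible of the prescribed dimension $\height(\lambda)+\height(\nu)$ (where $v$ has weight $\nu$), as recorded from \cite{MV:GS} earlier in the paper, so its fundamental class in $H_*(\overline{\Gr}_\lambda)$ is nonzero. Hence at least one $c_w^{\lambda,v} > 0$, and combined with $\xi_w(x) > 0$ this yields $[\MV_{\lambda,v}](x) > 0$, as required.

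The only mild subtlety to watch is that the Kumar--Nori statement is phrased for effective cycles in $\Gr_G$; since $\Gr_G$ is an ind-variety this requires being careful about working inside a finite-dimensional Schubert variety (here $\overline{\Gr}_\lambda = X_{\lambda}$ via \eqref{e:GrLambdaIsSchub}) so that the expansion in Schubert classes is a finite sum. Once this framing is in place, the argument above is essentially immediate; I expect no serious obstacle beyond citing the two ingredients correctly.
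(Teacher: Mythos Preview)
Your proposal is correct and takes essentially the same approach as the paper: both invoke Kumar--Nori to write $[\MV_{\lambda,v}]$ as a nonnegative combination of Schubert classes and conclude positivity at any affine Schubert positive point. You are in fact slightly more careful than the paper's one-sentence proof in explicitly justifying strict positivity via the nonvanishing of the fundamental class.
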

\begin{proof}
The main result of Kumar and Nori \cite{KuNo:pos}, applied to
$\Gr_G$, shows that every effective cycle in $\Gr_G$ is homologous to
a positive sum of Schubert cycles.  It follows that the fundamental class $[\MV_{\lambda,v}]$ of an MV-cycle is a positive linear combination of the Schubert classes $\xi_w$.
\end{proof}

\begin{lem}\label{l:MVtoTP} Suppose $ X^{\MV}_{>0}\cap X_{>0}\ne\emptyset$. Then $ X^{\MV}_{>0}\subseteq X_{>0}$.
\end{lem}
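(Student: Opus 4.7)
Let $x_0\in X^{\MV}_{>0}\cap X_{>0}$ and $x\in X^{\MV}_{>0}$; I will verify that $x$ satisfies the hypothesis of the generalized Berenstein--Zelevinsky criterion (Proposition~\ref{P:genBZ}), which then gives $x\in U^\vee_{-,>0}$ and so $x\in X_{>0}$.

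By Proposition~\ref{p:MVmatrix}, evaluating an MV-class at a point $y\in X\subset U^\vee_{-}$ (acting on $V_\lambda$ through the simply connected cover $G^*$) gives the matrix coefficient
\[
[\MV_{\lambda,v}](y) \;=\; \ip{y\cdot v,\,v^-_\lambda},
\]
so MV-positivity of $y$ amounts to $\ip{y\cdot v,v^-_\lambda}>0$ for every MV-basis vector $v$ in every irreducible $G^\vee$-representation. I restrict attention to MV-vectors in one-dimensional weight spaces $V_\lambda(w\lambda)$: each such vector has the form $v^{MV}_{\lambda,w\lambda}=c(\lambda,w)\cdot\dot w\cdot v^+_\lambda$ for some nonzero real scalar $c(\lambda,w)$. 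Using the symmetry of the Shapovalov form, the identification $v^-_\lambda=\dot w_0\cdot v^+_\lambda$, and the special structure of $X$ as the stabilizer of the principal nilpotent, the ``MV-form'' matrix coefficient $y\mapsto\ip{y\cdot\dot w\cdot v^+_\lambda,v^-_\lambda}$ on $X$ can be matched, up to a nonzero real scalar, with the Berenstein--Zelevinsky chamber minor $y\mapsto\ip{\dot u\cdot v^+_\lambda,y\cdot v^+_\lambda}$ for an appropriate Weyl group element $u$ (essentially $u=w_0w$). Both families are nonvanishing on the connected cell $U^\vee_{-,>0}$.

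Now evaluate at the common point $x_0\in X^{\MV}_{>0}\cap X_{>0}$: at $x_0$ the MV-class is positive by MV-positivity of $x_0$, while the matching chamber minor is positive by total positivity of $x_0$ (Proposition~\ref{P:BZ}). This forces the scaling constants, and in particular the normalization factors $c(\lambda,w)$, to be positive, uniformly across all $\lambda$ and $w$. For a general $x\in X^{\MV}_{>0}$, MV-positivity then translates into positivity of each corresponding chamber minor, so the vectors $v^{MV}_{\lambda,w\lambda}$ satisfy the hypothesis of Proposition~\ref{P:genBZ} at $x$. The conclusion of Proposition~\ref{P:genBZ} yields $x\in U^\vee_{-,>0}$, hence $x\in X_{>0}$.

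\textbf{Main obstacle.} The hard step is the matching of an MV-form matrix coefficient $\ip{y\cdot\dot w\cdot v^+_\lambda,v^-_\lambda}$ on $X$ with a BZ chamber minor $\ip{\dot u\cdot v^+_\lambda,y\cdot v^+_\lambda}$ up to a real scalar; this leans on the stabilizer structure of $X$ together with Shapovalov adjointness. The \emph{sign} of the matching scalar cannot be read off from MV-positivity alone, and the hypothesis $X^{\MV}_{>0}\cap X_{>0}\ne\emptyset$ is exactly what is needed to pin down these signs simultaneously for every $(\lambda,w)$.
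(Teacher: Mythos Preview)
Your overall strategy is the paper's: translate MV-positivity into matrix-coefficient positivity, use the common point $x_0$ to pin down signs, and then invoke Proposition~\ref{P:genBZ}. But the step you yourself flag as the ``main obstacle'' is a genuine gap, and the paper handles it differently and more simply.

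The matching you assert --- that on $X$ the function $y\mapsto\ip{y\cdot\dot w\cdot v^+_\lambda,\,v^-_\lambda}$ coincides up to a nonzero real scalar with a chamber minor $y\mapsto\ip{\dot u\cdot v^+_\lambda,\,y\cdot v^+_\lambda}$ for $u=w_0w$ --- is not proved. These two functions extract weight components of different depths ($w\lambda-w_0\lambda$ versus $\lambda-w_0w\lambda$), so on a general $y\in U^\vee_-$ they are unrelated; to make them agree on $X$ you would need a nontrivial symmetry of $X$ that you have not established. (In type $A$ it reduces to the Toeplitz property, but that is precisely what would have to be proved in general.) In addition, by working only with extremal vectors $\dot w\cdot v^+_\lambda$ you verify the hypothesis of Proposition~\ref{P:genBZ} only on extremal one-dimensional weight spaces; the proof of Proposition~\ref{P:genBZ} in Section~\ref{s:genBZ} genuinely uses non-extremal ones (for instance the vector $z=v_1^n v_k$ in $V_{(n+1)\omega_1}$ in the type~$A$ argument).

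The paper avoids both issues with a one-line observation: Shapovalov adjointness gives
\[
[\MV_{\lambda,\mu}](x)\;=\;\ip{x\cdot v,\,v^-_\lambda}\;=\;\ip{v,\,x^T\cdot v^-_\lambda},
\]
which is already in the exact form required by Proposition~\ref{P:genBZ} applied to $x^T\in U^\vee_+$, with the roles of $B^\vee_+$ and $B^\vee_-$ interchanged (so that $v^-_\lambda$ serves as the ``highest weight vector''). No property of $X$ beyond $X\subset U^\vee_-$ is used, and this works uniformly for \emph{every} one-dimensional weight space, not just extremal ones. The point $x_0$ then plays exactly the role you describe: it certifies that the MV-basis vector in each such weight space is a \emph{positive} multiple of the comparison vector $v'$, after which Proposition~\ref{P:genBZ} gives $x^T\in U^\vee_{+,>0}$ and hence $x\in U^\vee_{-,>0}$.
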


\begin{proof} 
Suppose  $V_{\lambda}$ is a representation of $G^\vee$ with highest weight $\lambda$ and  $ \mu$ is a weight of $V_{\lambda}$ with
one-dimensional weight space. 

Let $[\MV_{\lambda,\mu}]$ be the $MV$-cycle representing a weight vector $v$ with weight $\mu$ in $V_\lambda$ under the  isomorphism \eqref{E:IC}.  Then for $x \in X$, we have by Proposition \ref{p:MVmatrix}, $[MV_{\la,\mu}](x)  = \ip{x,[MV_{\la,\mu}]} = \ip{x \cdot v, v_\la^-}= \ip{ v, x^T \cdot v_\la^-}$. 
Note that we are really thinking of $V_\lambda$ as a lowest weight representation by fixing the lowest weight vector $v_\la^-$ (of weight $w_0\la$).

Now suppose that that there is a vector $v'$ in the weight space $V_\lambda(\mu)$ satisfying $\ip{v', y \cdot v_{\la}^-} > 0$ for all $y \in U^+_{>0}$.
Since the weight space $V_\lambda(\mu)$ is one-dimensional it follows that the $MV$-basis element $v = c_{\lambda,\mu} \, v'$ for
a scalar $c_{\lambda,\mu}$.  Choose $x_0\in X^{\MV}_{>0}\cap X_{>0}$. Such an $x_0$ exists by our assumption.  Then we see that
the scalar is positive,
$$
c_{\lambda,\mu}=\frac{\left<x_0\cdot v,
 v_{\la}^-\right>}{\left<x_0\cdot v',
 v_{\la}^-\right>}=\frac{[MV_{\la,\mu}](x_0)}{\left<v',x_0^T\cdot
 v_{\la}^-\right>}>0.
$$
Now suppose $x\in X^{\MV}_{>0} $ is an arbitrary element.
Then
$$
\left<v',x^T\cdot v_{\la}^-\right> = \frac
1{c_{\lambda,\mu} }\ip{v,x^T\cdot  v_{\la}^-} = \frac
1{c_{\lambda,\mu} }[\MV_{\lambda,\mu}](x)
>0.
$$
By Proposition \ref{P:genBZ} (applied with `positive Borel' $B^+$ taken to be $B^\vee_-$) , this implies that $x^T$ is totally positive. 
Clearly then $x$ is totally positive.
\end{proof}

\begin{lem}\label{l:fcheck}
The principal nilpotent $f^\vee=\sum f^\vee_i$ goes to a
 positive multiple of the affine Schubert class $\xi^{s_0}\in H^2(\Gr_G)$
 under the isomorphism $\mathcal U((\mathfrak g^{\vee})^{F})\cong H^*(\Gr_G) $.
\end{lem}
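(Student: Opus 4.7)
Plan: The lemma is in essence a dimension count plus a sign. Since $s_0$ is the unique length-one element of $W_\af^-$, we have $H^2(\Gr_G) = \mathbb{C}\cdot \xi^{s_0}$; dually, Kostant's structure theorem makes $\mathfrak g^\vee_F$ abelian with generators in principal degrees equal to the exponents $1 = e_1 < e_2 < \cdots < e_n$ of $\mathfrak g^\vee$, so the degree-two part of $\mathcal U(\mathfrak g^\vee_F)$ is one-dimensional and spanned by $f^\vee$. The graded Hopf isomorphism $\mathcal U(\mathfrak g^\vee_F) \cong H^*(\Gr_G)$ therefore sends $f^\vee$ to $c\cdot \xi^{s_0}$ for some nonzero real scalar $c$, and the content of the lemma is that $c > 0$.

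To pin down the sign, I would pair against a well-chosen one-dimensional MV-cycle class. Take any dominant $\lambda \in X_*(T)$ (for instance $\lambda = \theta^\vee$) and $j\in I$ such that $\mu := w_0\lambda + \alpha_j^\vee$ is a weight of $V_\lambda$; the weight space $V_\lambda(\mu)$ is one-dimensional, being the image of $v_\lambda^-$ under a single application of $e_j^\vee$. For $v\in V_\lambda(\mu)$ the MV-basis vector, $\dim MV_{\lambda,v} = \height(\lambda) + \height(\mu) = 1$, so by Kumar--Nori \cite{KuNo:pos} (the same input that drives Lemma~\ref{l:AFtoMV}) one has $[MV_{\lambda,v}] = a\cdot \xi_{s_0}$ with $a > 0$. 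Applying Proposition~\ref{p:MVmatrix} and matching weights,
\[
a\,c \;=\; \langle f^\vee,\, [MV_{\lambda,v}]\rangle \;=\; \langle f^\vee\cdot v,\, v_\lambda^-\rangle \;=\; \langle f_j^\vee\cdot v,\, v_\lambda^-\rangle \;=\; d,
\]
where $f_j^\vee\cdot v = d\cdot v_\lambda^-$ in the one-dimensional weight space $V_\lambda(w_0\lambda)$ (after normalizing $\|v_\lambda^-\|=1$). The lemma thus reduces to the assertion $d > 0$.

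The main obstacle is precisely this positivity of $d$: it is the statement that the Chevalley generator $f_j^\vee$ acts on the MV basis with a non-negative coefficient, and the potential subtlety is that the MV-basis direction in $V_\lambda(\mu)$ must agree with the ``positive'' direction selected by total positivity. In simply-laced type I would derive $d \geq 0$ from Remark~\ref{rem:can} and the fact that, in one-dimensional weight spaces, the MV basis and the canonical basis coincide up to sign; the sign is pinned down by the Kumar--Nori positivity of $a$, since $[MV_{\lambda,v}]$ is the fundamental class of an irreducible subvariety with its natural orientation. The non-simply-laced case is handled by folding into an ambient simply-laced root system. Strictness $d\neq 0$ follows at once because $V_\lambda$ is irreducible and both $V_\lambda(\mu)$ and $V_\lambda(w_0\lambda)$ are one-dimensional, so $f_j^\vee$ restricts to an isomorphism between them. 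Combined with $a>0$, this yields $c>0$ as desired.
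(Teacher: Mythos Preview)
Your reduction is clean up to the point where you need $d=\langle f_j^\vee\cdot v,\,v_\lambda^-\rangle>0$, but the justification you give for this sign is not valid. You invoke Remark~\ref{rem:can}, which concerns matrix coefficients of totally positive \emph{group} elements $x\in U^-_{>0}$, not the action of a single Chevalley generator $f_j^\vee$; these are different statements. More seriously, the claim that ``the sign is pinned down by the Kumar--Nori positivity of $a$'' is circular: Kumar--Nori tells you that the effective class $[MV_{\lambda,v}]$ expands positively in Schubert classes, giving $a>0$, but it says nothing about how the MV-basis direction of $v$ relates to the canonical basis or to the $f_j^\vee$-action. You have relocated the sign problem from $c$ to $d$ without resolving it. What you would actually need is that Chevalley generators act with nonnegative coefficients on the MV basis; this is a genuine theorem (in the vein of Braverman--Gaitsgory's crystal structure on MV cycles) and not something the paper has available. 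The folding remark for non-simply-laced types is likewise only a gesture.

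By contrast, the paper avoids any statement about positivity of the $\mathfrak g^\vee$-action on the MV basis. It computes $\xi_{s_it_\lambda}\xi_{t_\lambda}^{-1}(\exp(f^\vee))$ in two ways: first, expanding $\exp(c\xi^{s_0})$ and using Kumar--Nori positivity of the cup product in $H^*(\Gr_G)$ to see this quantity has the sign of $1/c$; second, via Peterson's isomorphism \eqref{e:PetIsoLoopHom}, identifying it with $\sigma_{s_i}(\exp(f^\vee))$, which by Kostant's formula is a ratio of chamber minors and hence positive because $\exp(f^\vee)\in U^\vee_{-,>0}$. The inputs here---Peterson's map, Kostant's formula, and the total positivity of $\exp(f^\vee)$---are all available in the paper's framework, whereas your MV-basis positivity is not.
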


\begin{proof}
Since $H^2(\Gr_G)$ is $1$-dimensional we know that $f^\vee=c\xi^{s_0}$
under the identification  $\mathcal U((\mathfrak g^{\vee})^{F})\cong H^*(\Gr_G) $.
We want to show that $c$ is positive. 

 Consider the exponential $\exp(f^\vee)=\exp(c\xi^{s_0})$ as an element
of the completion $\widehat{ H^*(\Gr_G)}$, and 
choose $\lambda$ such that $s_it_\lambda\in W^-_{\af}$. 
If we evaluate the (localized) homology class
$\xi_{s_it_\lambda}\xi_{t_\lambda}\inv$ on this element we obtain
\begin{align*}
\xi_{s_it_\lambda}\xi_{t_\lambda}\inv(\exp(f^\vee))&=\xi_{s_it_\lambda}\xi_{t_\lambda}\inv(\exp(c\xi^{s_0}))
\\
&=\xi_{s_it_\lambda}\xi_{t_\lambda}\inv(1+c\xi^{s_0} +
\frac{c^2}{2!}(a_2) + \frac{c^3}{3!}(a_3) + \cdots)
\\
&=
\frac{c^{k-1}}{(k-1)!}\frac{k!}{c^{k}}\xi_{s_it_\lambda}(a_{k-1})\xi_{t_\lambda}(a_{k})\inv
\\
&
=\frac{k}{c}\ \xi_{s_it_\lambda}(a_{k-1})\xi_{t_\lambda}(a_{k})\inv,
\end{align*}
where $\ell(t_\lambda) = k = \ell(s_it_\lambda) + 1$ and the $a_i
\in H^{2i}(\Gr_G)$ are positive linear combinations of Schubert
classes, since all cup product Schubert structure constants of
$H^*(\Gr_G)$ are positive \cite{KuNo:pos}. Therefore $c$ is positive
if and only if
$\xi_{s_it_\lambda}\xi_{t_\lambda}\inv(\exp(f^\vee))$ is positive.

We now compute $\xi_{s_it_\lambda}\xi_{t_\lambda}\inv(\exp(f^\vee))$  in
 a different way. Under
Peterson's isomorphism \eqref{e:PetIsoLoopHom} 
\begin{equation}\label{e:ExamplePetIso}
\xi_{s_it_\lambda}\xi_{t_\lambda}\inv\mapsto \sigma_{s_i}.
\end{equation}
Consider  the element of $X_{>0}$, the totally positive part of $X_B$, given by $\exp(f^\vee)$. 
Using \eqref{e:ExamplePetIso} and identifying both $H_*(\Gr_G)$ and $qH^*(G/B)[q_1\inv,\dotsc, q_n\inv ]$ with
$\C[X_B]$ as in \eqref{e:PetIsoLoopHom}, we can evaluate 
\begin{equation}\label{e:qCohEval}
\xi_{s_it_\lambda}\xi_{t_\lambda}\inv(\exp(f^\vee))=\sigma_{s_i}
(\exp(f^\vee)).
\end{equation}
The right hand side here is a quotient of two `chamber minors' of $\exp(f^\vee)$ by
Kostant's formula \cite[Proposition 33]{Kos:QCoh}. Therefore the total positivity of
$\exp(f^\vee)$ implies
\begin{equation}\label{e:xipos}
\xi_{s_it_\lambda}\xi_{t_\lambda}\inv(\exp(f^\vee))> 0.
\end{equation}

\end{proof}

\begin{lem}\label{l:SchubPosx} The element $x=\exp(f^\vee)$ lies in $X^{\af}_{>0}$.
\end{lem}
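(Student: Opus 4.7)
Given Lemma~\ref{l:fcheck}, the image of $f^\vee$ in $U(\mathfrak g^\vee_F) \cong H^*(\Gr_G)$ is $c\xi^{s_0}$ with $c > 0$. The plan is to expand $\xi_w(\exp(f^\vee))$ using the Hopf duality between $\mathcal O(X) \cong H_*(\Gr_G)$ and $U(\mathfrak g^\vee_F) \cong H^*(\Gr_G)$: for any $\eta$ in the Lie algebra,
$$\xi_w(\exp(\eta)) \;=\; \sum_{k \geq 0} \frac{1}{k!}\, \langle \eta^k, \xi_w \rangle.$$
Taking $\eta = c\xi^{s_0}$, and noting that $\xi^{s_0}$ has cohomological degree $2$ while $\xi_w$ has homological degree $2\ell(w)$, only the $k = \ell(w)$ term survives, giving the clean formula
$$\xi_w(\exp(f^\vee)) \;=\; \frac{c^{\ell(w)}}{\ell(w)!}\, \langle (\xi^{s_0})^{\ell(w)}, \xi_w \rangle.$$

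Since $c > 0$, Lemma~\ref{l:SchubPosx} reduces to showing $\langle (\xi^{s_0})^{\ell(w)}, \xi_w \rangle > 0$ for every $w \in W_\af^-$. This pairing is the coefficient of $\xi^w$ in the Schubert expansion of $(\xi^{s_0})^{\ell(w)}$, which is nonnegative by the Kumar-Nori theorem \cite{KuNo:pos}. To promote this to strict positivity, I would argue geometrically: the same pairing equals the top self-intersection $\int_{X_w} \bigl(\xi^{s_0}|_{X_w}\bigr)^{\ell(w)}$ on the projective Schubert variety $X_w$. Since $H^2(\Gr_G, \Z)$ is generated by $\xi^{s_0}$ and the ample generator $\mathcal L$ of $\operatorname{Pic}(\Gr_G)$ has strictly positive degree on the Schubert curve $X_{s_0} \cong \mathbb P^1$, the class $\xi^{s_0}$ is a positive rational multiple of $c_1(\mathcal L)$. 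The restriction $\mathcal L|_{X_w}$ remains ample on the projective variety $X_w$, so its top self-intersection number on $X_w$ is strictly positive.

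The main obstacle is this strict positivity: Kumar-Nori only delivers nonnegativity, so a genuinely geometric input (ampleness, as above) is required to rule out vanishing. An alternative route would be to invoke an affine Chevalley/Pieri rule asserting that the coefficient of $\xi^u$ in $\xi^{s_0}\cdot\xi^v$ is strictly positive whenever $v \lessdot u$ in the Bruhat order on $W_\af^-$, and then induct on $\ell(w)$ using that this order is graded by length and connected from $e$ up to $w$.
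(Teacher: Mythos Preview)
Your approach is essentially the paper's: expand $\exp(f^\vee)=\exp(c\,\xi^{s_0})$ and pair with $\xi_w$, reducing to the positivity of $\langle (\xi^{s_0})^{\ell(w)},\xi_w\rangle$. The paper's proof is terser and simply asserts that $\exp(f^\vee)$ ``expands as a positive linear combination of Schubert classes'' (citing Kumar--Nori), without isolating the strict-positivity step; you have correctly flagged that Kumar--Nori alone gives only nonnegativity, and your ampleness argument---that $\xi^{s_0}$ is a positive multiple of $c_1(\mathcal L)$ for the ample generator $\mathcal L$, so the pairing computes a positive projective degree of $X_w$---is a valid and clean way to close that gap.
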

\begin{proof}
A Schubert class $\xi_w$ in $H_*(\Gr_G)$ can be evaluated against
$\exp(f^\vee)$ by viewing $f^\vee$ as element of $H^2(\Gr_G)$,
expanding $\exp(f^\vee)$ as a power series and pairing $\xi_w$ with
each summand. But Lemma~\ref{l:fcheck} implies that $\exp(f^\vee)$
expands as a positive linear combination of Schubert 
classes. This implies that $\xi_w(\exp(f^\vee))>0$ for all $w\in
W_{\af}^-$.
\end{proof}

\begin{lem} We have $X_{>0}^{\MV}\subseteq X_{>0}$ and
$X_{\ge 0}^{\MV}\subseteq X_{\ge 0}$.
\end{lem}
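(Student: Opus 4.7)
The plan is to combine the preceding four lemmas. The key idea is to produce an element $x_0 \in X$ belonging simultaneously to $X^{\MV}_{>0}$ and $X_{>0}$, and then invoke Lemma \ref{l:MVtoTP} for the strict inclusion and a density argument for the nonstrict one. The distinguished element is $x_0 := \exp(f^\vee)$: by Lemma \ref{l:SchubPosx} it lies in $X^{\af}_{>0}$, so Lemma \ref{l:AFtoMV} places it in $X^{\MV}_{>0}$; simultaneously the argument behind \eqref{e:xipos} in the proof of Lemma \ref{l:fcheck} (resting on classical Kostant-Lusztig theory that $\exp$ of the sum of negative Chevalley generators is totally positive) places it in $X_{>0}$. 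Hence $X^{\MV}_{>0} \cap X_{>0}$ is nonempty, and Lemma \ref{l:MVtoTP} directly yields $X^{\MV}_{>0} \subseteq X_{>0}$.

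For the nonstrict inclusion $X^{\MV}_{\geq 0} \subseteq X_{\geq 0}$, I would use a perturbation argument. Given $x \in X^{\MV}_{\geq 0}$, consider the real one-parameter family $x_t := x \cdot \exp(tf^\vee) \in X$ for $t > 0$. The plan is to show $x_t \in X^{\MV}_{>0}$ for every $t > 0$; then the strict inclusion just proved gives $x_t \in X_{>0}$, and letting $t \to 0^+$ yields $x \in \overline{X_{>0}} = X_{\ge 0}$, the last equality being standard for Lusztig's totally positive part. To verify that $x_t \in X^{\MV}_{>0}$, use Proposition \ref{p:MVmatrix} to expand
\[
[\MV_{\lambda,v}](x_t) \;=\; \ip{x \exp(tf^\vee) \cdot v,\, v_\lambda^-}
\;=\; \sum_{k \geq 0} \frac{t^k}{k!}\, \ip{x \cdot (f^\vee)^k v,\, v_\lambda^-},
\]
where the commutation of $x$ and $\exp(tf^\vee)$ inside $G^\vee$ is allowed because $\geh^\vee_F$ is abelian. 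Expanding $(f^\vee)^k v = \sum_{v'} a_{v'}^{(k)}\, v'$ in the MV basis, each term becomes $\sum_{v'} a_{v'}^{(k)}\, [\MV_{\lambda,v'}](x)$, which is nonnegative provided the $a_{v'}^{(k)}$ are. For the degree $k = \mathrm{ht}(\mu - w_0\lambda)$, the expansion contains $v_\lambda^-$ with a strictly positive coefficient, and $[\MV_{\lambda,v_\lambda^-}](x) = \ip{x \cdot v_\lambda^-, v_\lambda^-}$ is a strictly positive constant on $U^\vee_-$ (the lowest-weight vector is fixed by the lower unipotent radical), so the polynomial in $t$ is not identically zero and takes strictly positive values for $t > 0$.

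The main obstacle is justifying the nonnegativity $a_{v'}^{(k)} \geq 0$, i.e.\ the positivity of the action of the Chevalley generators $f_i^\vee$ on the MV basis. This is a nontrivial structural feature of the geometric Satake correspondence, traceable to Kumar-Nori positivity applied to convolution products (and in the simply-laced case to Lusztig's canonical basis positivity as noted in Remark \ref{rem:can}). A cleaner alternative would bypass this step by establishing directly that $X^{\MV}_{\geq 0} = \overline{X^{\MV}_{>0}}$ using only that $X^{\MV}_{\geq 0}$ is closed, $X^{\MV}_{>0}$ is open and nonempty (it contains $x_0$), and the $\R_{>0}$-scaling $t \mapsto \exp(\log x + t \cdot f^\vee)$ carries any point of $X^{\MV}_{\geq 0}$ into $X^{\MV}_{>0}$ for small $t>0$; then $\overline{X^{\MV}_{>0}} \subseteq \overline{X_{>0}} = X_{\geq 0}$ completes the argument.
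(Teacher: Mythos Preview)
Your argument for the strict inclusion $X^{\MV}_{>0} \subseteq X_{>0}$ is correct and coincides with the paper's: exhibit $x_0 = \exp(f^\vee)$ as a point of $X_{>0}$ (classical total positivity of the exponential of a principal nilpotent) which by Lemma~\ref{l:SchubPosx} and Lemma~\ref{l:AFtoMV} also lies in $X^{\MV}_{>0}$, and then invoke Lemma~\ref{l:MVtoTP}.

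For the nonstrict inclusion the paper is far more economical; it simply records that it is ``an immediate consequence.'' The point is that the proof of Lemma~\ref{l:MVtoTP} does more than prove an implication: once the existence of $x_0$ has fixed each constant $c_{\lambda,\mu}$ to be positive, one has an \emph{identity of functions on all of $X$},
\[
\ip{v',\, x^T \cdot v_\lambda^-} \;=\; \frac{1}{c_{\lambda,\mu}}\,[\MV_{\lambda,\mu}](x),
\]
for every relevant one-dimensional weight space. Reading this identity with $\geq$ in place of $>$ feeds directly into the nonstrict form of Proposition~\ref{P:genBZ} and gives $X^{\MV}_{\geq 0}\subseteq X_{\geq 0}$ at once. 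Your perturbation route via $x_t = x\cdot\exp(t f^\vee)$ is therefore unnecessary, and as you yourself flag, it rests on the positivity of the action of $f^\vee$ on the MV basis --- a nontrivial structural input not otherwise used (or established) in the paper. There is also a second soft spot you pass over: even granting $a^{(k)}_{v'}\geq 0$, the claim that $(f^\vee)^k v$ has strictly positive $v_\lambda^-$-coefficient for $k=\height(\mu-w_0\lambda)$ needs its own justification. Your fallback suggestion $\exp(\log x + t f^\vee)$ is the same path (since $X$ is abelian) and inherits the same gap. The paper's route avoids all of this.
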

\begin{proof}
In Lemma~\ref{l:SchubPosx} we found a totally positive point $x \in X_{>0}$,
namely $x=\exp(f^\vee)$, which is also affine Schubert positive.
Since affine Schubert positive implies MV-positive, by Lemma~\ref{l:AFtoMV}, this means that $x\in X^{\MV}_{>0}\cap X_{>0}$.
Now Lemma~\ref{l:MVtoTP} implies that $ X_{>0}^{\MV}\subseteq
X_{>0}$. The second inclusion is an immediate consequence.
\end{proof}

\begin{cor}\label{c:AFtoTN} We have 
$X^{\af}_{>0} \subseteq X^\MV_{>0} \subseteq X_{>0}$ and $X^{\af}_{\geq 0} \subseteq X^\MV_{\geq 0} \subseteq X_{\geq 0}$.
\end{cor}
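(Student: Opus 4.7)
The plan is to read the corollary off from the lemmas already established in this section; no new ingredients are required.

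For the strict chain $X^{\af}_{>0} \subseteq X^\MV_{>0} \subseteq X_{>0}$, the left inclusion is exactly Lemma \ref{l:AFtoMV}: by Kumar--Nori, each MV-cycle class expands as $[\MV_{\lambda,v}] = \sum_w c_w \xi_w$ with $c_w \geq 0$ (and with some $c_w > 0$), so affine Schubert positivity of $x$ forces $[\MV_{\lambda,v}](x) > 0$. The right inclusion is the statement of the final lemma of the section, whose proof combined the anchor point $\exp(f^\vee) \in X^\MV_{>0} \cap X_{>0}$ (Lemmas \ref{l:fcheck}--\ref{l:SchubPosx}) with the promotion Lemma \ref{l:MVtoTP}, which uses Proposition \ref{P:genBZ} and a sign-comparison between the MV-basis and the ``positivity basis'' in each one-dimensional weight space.

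For the nonnegative chain $X^{\af}_{\geq 0} \subseteq X^\MV_{\geq 0} \subseteq X_{\geq 0}$, the first inclusion follows from the same Kumar--Nori expansion applied with weak inequalities throughout: $\xi_w(x) \geq 0$ for all $w$ implies $[\MV_{\lambda,v}](x) = \sum_w c_w \xi_w(x) \geq 0$. The second inclusion is already asserted as part of the final lemma of the section (as ``an immediate consequence'' of the strict case). To make that immediacy explicit, I would use a perturbation-and-closure argument: given $x \in X^\MV_{\geq 0}$, set $x_\epsilon := x \cdot \exp(\epsilon f^\vee) \in X$ for $\epsilon > 0$, and verify via Proposition \ref{p:MVmatrix} that $x_\epsilon \in X^\MV_{>0}$ for all sufficiently small $\epsilon > 0$; then $x_\epsilon \in X_{>0}$ by the strict case, and since $X_{\geq 0}$ is closed in $X(\R)$, letting $\epsilon \to 0^+$ gives $x \in X_{\geq 0}$.

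The only potentially delicate step is the perturbation claim in the nonnegative case, namely that $x \cdot \exp(\epsilon f^\vee) \in X^\MV_{>0}$ for small $\epsilon > 0$ even when some MV-evaluations vanish at $x$. This amounts to checking that the polynomial $\epsilon \mapsto [\MV_{\lambda,v}](x_\epsilon)$ is not identically zero with a positive leading nonzero coefficient; such a check can be made from the matrix-coefficient expansion in Proposition \ref{p:MVmatrix}. This is the main (small) obstacle; everything else is direct assembly of previously proved lemmas.
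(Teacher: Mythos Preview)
Your reading of the corollary directly off the preceding lemmas is exactly how the paper treats it: the strict chain is Lemma~\ref{l:AFtoMV} together with the first assertion of the final (unnamed) lemma, and the nonnegative chain is the weak-inequality version of the Kumar--Nori expansion together with the second assertion of that same lemma. The paper supplies no further argument for the corollary.

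Your optional attempt to make the ``immediate consequence'' $X^{\MV}_{\ge 0}\subseteq X_{\ge 0}$ explicit via perturbation is not needed for the corollary, and as written it has a real gap. Even if each individual polynomial $\epsilon\mapsto[\MV_{\lambda,v}](x_\epsilon)$ had positive first nonzero coefficient (which you do not actually verify from Proposition~\ref{p:MVmatrix}; knowing only $x\in X^{\MV}_{\ge 0}$ gives no control over the higher-order terms $\ip{x\,(f^\vee)^k\cdot v,\,v_\lambda^-}$), there are infinitely many MV-cycles as $\lambda$ ranges over dominant coweights, so the positivity threshold need not be uniform in $(\lambda,v)$ and you cannot choose a single $\epsilon>0$ with $x_\epsilon\in X^{\MV}_{>0}$. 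If you want to unpack that line honestly, the natural route is instead to rerun the sign-comparison of Lemma~\ref{l:MVtoTP} with weak inequalities: the constants $c_{\lambda,\mu}$ are already known to be positive from the anchor point $\exp(f^\vee)$, so MV-nonnegativity of $x$ yields nonnegativity of the relevant matrix coefficients, and one then needs a nonnegative analogue of the Berenstein--Zelevinsky criterion rather than a perturbation.
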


\section{Parametrizing the affine Schubert-positive part of $X_P$}
Let $X_{P,>0}^{\af} = X^{\af}_{\geq 0} \cap X_P$ denote the points $x \in X_P$ such that $\xi_w(x) \geq 0$ for every affine Schubert class $\xi_w$.  
First we note that $\pi^P$ takes
values in $\R_{>0}^k$ on $X_{P,>0}^{\af}$.  Indeed, by definition $q^P_i(x) \neq 0$ for $x \in X_P$, and expressing
the quantum parameters $q^P_i$ in terms of affine Schubert classes
$\xi_{t_\lambda}$ using
Theorem~\ref{t:PetIsoLoop}(2), it follows that $x\in X_{P,>0}^{\af}$ has $q^P_i(x)>0$ for all $i\in I^P$.  
It follows from the following result that we also have $X^{\af}_{B, >0} = X^{\af}_{>0}$.

\begin{lem}
Suppose $x \in X_{P,>0}^{\af}$.  Then $\sigma_w^P(x) > 0$ for all $w\in W^P$.
\end{lem}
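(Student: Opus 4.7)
The plan is to argue in two steps: first, to establish $\sigma^P_w(x) \geq 0$ for all $w \in W^P$ as a direct consequence of Peterson's isomorphism and of the affine Schubert nonnegativity assumed of $x$; then, to promote this to strict positivity via a Perron-Frobenius argument based on the nonnegativity of the three-point Gromov-Witten invariants.

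For the nonnegativity step, I would fix $w \in W^P$ and apply Theorem~\ref{t:PetIsoLoop}(2). Choose cocharacters $\lambda$ and $\mu$ so that $w\pi_P(t_\lambda)$ and $\pi_P(t_\mu)$ both lie in the prescribed coset representatives, giving the identity
$$
\sigma^P_w \;=\; q_{\eta_P(\mu - \lambda)}\cdot \xi_{w\pi_P(t_\lambda)}\cdot \xi_{\pi_P(t_\mu)}^{-1}
$$
inside $\OO(X_P)$. Evaluating at $x \in X^{\af}_{P,>0}$: the quantum monomial $q_{\eta_P(\mu-\lambda)}(x)$ is positive by the paragraph preceding the lemma; the affine Schubert class $\xi_{\pi_P(t_\mu)}(x)$ is nonnegative (affine Schubert nonnegativity) and nonzero (it is invertible on $X_P$), hence strictly positive; and $\xi_{w\pi_P(t_\lambda)}(x) \geq 0$ again by affine Schubert nonnegativity. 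So $\sigma^P_w(x) \geq 0$.

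For the strict positivity step, I would apply Perron-Frobenius to the quantum multiplication operators on $qH^*(G/P)$. Pick a class $\tau \in qH^*(G/P)$ whose matrix $M_\tau$ in the Schubert basis $\{\sigma^P_u\}_{u \in W^P}$ has coefficients that are nonnegative polynomials in the $q^P_i$ --- which is automatic from nonnegativity of three-point Gromov-Witten invariants. At $x$, where each $q^P_i(x)>0$, the matrix $M_\tau(x)$ is a nonnegative real matrix, and the vector $\mathbf v := (\sigma^P_u(x))_{u \in W^P}$ is a nonnegative eigenvector of $M_\tau(x)^T$ with eigenvalue $\tau(x)$. Because $\sigma^P_1(x) = 1$, $\mathbf v$ is nonzero. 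If $\tau$ can be chosen so that $M_\tau(x)$ is Perron-Frobenius irreducible, then every entry of $\mathbf v$ is forced to be strictly positive, proving the lemma.

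The main obstacle is verifying this irreducibility claim: that some $\tau$ (for instance $\tau = \sum_{j \in I^P} \sigma^P_{s_j}$, or a power or polynomial in the Schubert divisors) has the property that the support graph of $M_\tau(x)$ is strongly connected on $W^P$. The classical part of multiplication by a sum of Schubert divisors already builds most of the Bruhat graph on $W^P$, but strong connectivity requires closing cycles in both directions, which one expects to achieve using the strictly positive quantum corrections available on account of $q^P_i(x)>0$. Making this irreducibility uniform across types, via the quantum Chevalley formula and results of Fulton--Woodward on minimal $q$-powers in quantum products, is the only non-formal portion of the argument.
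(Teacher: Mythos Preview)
Your outline is sound, and the nonnegativity step matches the paper's exactly. The strict-positivity step, however, proceeds differently in the paper.

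The paper does not invoke Perron--Frobenius for this lemma. Instead it argues by contradiction from a single explicit identity, Proposition~11.2 of \cite{LaSh:QH}:
\[
\sigma^P_{\pi_P(r_\theta)}\, \sigma_w^P \;=\; q_{\eta_P(\theta^\vee - w^{-1}\theta^\vee)}\, \sigma^P_{\pi_P(r_\theta w)} \;+\; q_{\eta_P(\theta^\vee)} \sum_{s_i w < w} a_i^\vee\, \sigma^P_{s_i w}.
\]
Iterating multiplication by $\sigma^P_{\pi_P(r_\theta)}$ eventually produces a nonnegative Schubert combination that contains a pure monomial in the $q^P_i$. Since each $q^P_i(x)>0$, this is strictly positive at $x$; but if $\sigma^P_w(x)=0$, the left side vanishes at $x$ for every power, a contradiction.

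Your Perron--Frobenius route also works, but the irreducibility you correctly flag as the only non-formal point is exactly what the paper proves later as Lemma~\ref{l:indec} (indecomposability of $M_\sigma$), via Fulton--Woodward. Two remarks. First, the paper's operator is $\sigma = \sum_{w \in W^P}\sigma^P_w$, not the sum of divisors; its proof specifically uses the presence of $\sigma^P_{w_0^P}$ to close the cycle back to the identity (the ``$1\in V$'' step) through quantum Poincar\'e duality and Fulton--Woodward, so your divisor-only $\tau$ would require an extra argument for that direction. Second, with the paper's convention $\tau\cdot\sigma_v^P=\sum_w m_{v,w}\sigma_w^P$, your vector $\mathbf v$ is a right eigenvector of $M_\tau(x)$ itself rather than its transpose --- harmless, since irreducibility and the Perron conclusion are unaffected by transposition.

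What each approach buys: the paper's argument keeps this lemma short and independent of the later Perron--Frobenius section, resting on one identity from \cite{LaSh:QH}. Your approach effectively front-loads Lemma~\ref{l:indec}, which is perfectly viable and arguably more conceptual, at the cost of slightly more tangled logical dependencies within the paper.
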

\begin{proof}
It follows from Theorem~\ref{t:PetIsoLoop}(2) and the definitions that $\sigma_w^P(x) \geq 0$.  Suppose $\sigma_w^P(x) = 0$.  Let $r_\theta$ denote the reflection in the longest root, and $\pi_P(r_\theta) \in W^P$ be the corresponding minimal length parabolic coset representative.
Proposition 11.2 of \cite{LaSh:QH} states that
$$
\sigma^P_{\pi_P(r_\theta)}\, \sigma_w^P =q_{\eta_P(\theta^\vee - w^{-1}\theta^\vee)} \sigma_{\pi_P(r_\theta w)}^P +  q_{\eta_P(\theta^\vee)} \sum_{s_i w < w} a_i^\vee\, \sigma_{s_i w}^P.
$$
We refer the reader to Appendix \ref{s:proof1} and \cite{LaSh:QH} for the notation used here.  Applying this repeatedly, we see that for large $\ell$, the product $(\sigma^P_{\pi_P(r_\theta)})^{\ell} \, \sigma_w^P$ is a (positive) combination of quantum Schubert classes which includes a monomial in the $q_i^P$.  This contradicts $q_i^P(x) > 0$ for each $i$.
\end{proof}

The remainder of this section will be devoted to the proof of the following proposition. 

\begin {prop} The map $\pi^P_{>0}:X^{\af}_{P,>0}\To \R_{>0}^{k}$ is
bijective. 
\end{prop}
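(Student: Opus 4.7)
Fix $\mathbf{q} = (q_1,\dotsc,q_k) \in \R^k_{>0}$ and write $A_{\mathbf{q}} := qH^*(G/P) \otimes_{\C[q_1^P,\dotsc,q_k^P]} \C$ for the specialization at $q_i^P = q_i$, a finite-dimensional commutative $\C$-algebra with Schubert basis $\{\sigma^P_w : w \in W^P\}$. By Theorem~\ref{t:Pet}(3), the fiber $(\pi^P)^{-1}(\mathbf{q})$ is in bijection with the characters $\phi: A_{\mathbf{q}} \to \C$. Combining the preceding lemma with Theorem~\ref{t:PetIsoLoop}(2)---which (taking $\mu=0$) identifies $\xi_{w\pi_P(t_\lambda)}$ with $q_{\eta_P(\lambda)} \sigma^P_w$ and shows that $\phi(\xi_u)$ is, up to a positive monomial in the $q_i$, equal to some $\phi(\sigma^P_w)$---the point corresponding to $\phi$ lies in $X^{\af}_{P,>0}$ if and only if $\phi(\sigma^P_w)>0$ for every $w\in W^P$. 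The proposition is thus reduced to showing that $A_{\mathbf{q}}$ admits a unique such \emph{positive} character.

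Consider the endomorphism $T$ of $A_{\mathbf{q}}$ given by quantum multiplication by $\sigma^P_{\pi_P(r_\theta)}$. The non-negativity of the 3-point Gromov-Witten invariants and the positivity of $\mathbf{q}$ imply that the matrix of $T$ in the Schubert basis has entries in $\R_{\geq 0}$. Proposition~11.2 of \cite{LaSh:QH}, as recalled in the previous lemma, writes $T\sigma^P_w$ as a sum of a $\sigma^P_{\pi_P(r_\theta w)}$-term and a positive combination of the classes $\sigma^P_{s_iw}$ with $s_iw<w$. The descent summand supplies paths downward in Bruhat order from any $w$ to $e$, while ascent from $e$ back up to all of $W^P$ is furnished by iterating the first summand or, as a harmless enrichment, by adding the non-negative operators $M_{\sigma^P_{s_i}}$ for $i\in I^P$, whose classical Chevalley contribution rises in the Bruhat order. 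A combinatorial verification then shows that the resulting non-negative matrix is irreducible; this is the principal technical step of the proof.

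By the Perron-Frobenius theorem, this matrix has a simple largest eigenvalue $\Lambda>0$ with a unique (up to positive scaling) positive eigenvector $\mathbf{v}=\sum_w v_w\,\sigma^P_w$, $v_w>0$. Commutativity of $A_{\mathbf{q}}$ ensures that every multiplication operator $M_a$ commutes with this irreducible operator, and simplicity of $\Lambda$ forces $\mathbf{v}$ to be a common eigenvector of all the $M_a$; hence $\phi(a) := (\text{eigenvalue of } M_a \text{ on } \mathbf{v})$ defines a real $\C$-algebra character with $\phi(1)=1$. Non-negativity of the matrix of $M_w$ combined with strict positivity of $\mathbf{v}$ gives $\phi(\sigma^P_w)\geq 0$. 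If some $\phi(\sigma^P_w)$ were zero, then $M_w\mathbf{v}=0$ componentwise; since each $v_v>0$ this forces every row of $M_w$ to vanish, so $M_w=0$ and then $\sigma^P_w = \sigma^P_w\cdot 1 = 0$, contradicting the linear independence of the Schubert basis of $A_{\mathbf{q}}$. Thus $\phi$ is positive, yielding the surjectivity of $\pi^P_{>0}$.

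Injectivity is again handled by the uniqueness clause of Perron-Frobenius. Given any positive character $\phi'$ of $A_{\mathbf{q}}$, consider the element $\mathbf{w}':=\sum_{v\in W^P} \phi'(\sigma^P_{\PD(v)})\,\sigma^P_v \in A_{\mathbf{q}}$, which has strictly positive Schubert coefficients since $\phi'>0$ on the basis. Using the $S_3$-symmetry of the 3-point Gromov-Witten invariants together with the homomorphism identity $\phi'(\sigma^P_a \sigma^P_u)=\phi'(\sigma^P_a)\phi'(\sigma^P_u)$, a direct computation shows that $\sigma^P_a\cdot\mathbf{w}'=\phi'(\sigma^P_a)\mathbf{w}'$ for every $a$, so $\mathbf{w}'$ is a positive common eigenvector of $T$. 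By Perron-Frobenius uniqueness, $\mathbf{w}'$ is proportional to $\mathbf{v}$, whence $\phi'=\phi$. Everything beyond Paragraph~2 is a direct application of Perron-Frobenius and the structural positivity of quantum Schubert calculus; the irreducibility assertion for $T$ is the one nontrivial combinatorial input.
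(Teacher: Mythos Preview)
Your overall strategy---reduce to a Perron--Frobenius argument on a specialized quantum cohomology ring---is exactly what the paper does, and your treatment of surjectivity via the common eigenvector and of injectivity via the vector $\mathbf w'=\sum_v\phi'(\sigma^P_{PD(v)})\sigma^P_v$ is correct and essentially the content of the paper's Lemma~\ref{l:EV} together with the uniqueness clause of Perron--Frobenius. Your strict-positivity step ($\phi(\sigma^P_w)=0\Rightarrow M_w\mathbf v=0\Rightarrow M_w=0\Rightarrow\sigma^P_w=0$) is a nice shortcut.

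The one genuine gap is the irreducibility step. You acknowledge it as ``the principal technical step'' but do not carry it out, and your choice of operator makes it harder than necessary. You take $T=M_{\sigma^P_{\pi_P(r_\theta)}}$ and appeal to the descent terms of \cite[Prop.~11.2]{LaSh:QH} to go down and then either iterate the $\pi_P(r_\theta w)$-term or ``enrich'' by adding the $M_{\sigma^P_{s_i}}$ to go up. The enrichment is not harmless for the exposition: if you add operators you must run Perron--Frobenius on the sum, not on $T$, and you still owe the reader the verification that every $w\in W^P$ is reachable from $e$ in the resulting digraph. The paper avoids this by a different and cleaner choice: it uses the operator $M_\sigma$ with $\sigma=\sum_{w\in W^P}\sigma^P_w$. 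Irreducibility (Lemma~\ref{l:indec}) is then proved in two short steps: (i) if $1\notin V$ then some $\sigma^P_v\cdot\sigma^P_{w_0^P}=0$ in $qH^*(G/P)$, contradicting Fulton--Woodward's theorem that no two quantum Schubert classes multiply to zero; (ii) if $1\in V$ then the classical Chevalley formula (choosing for any $w\ne 1$ a cover $v\lessdot w$ with $w=vr_\alpha$, $\alpha\in\Delta_+^P$) forces $V=W^P$. This is a complete argument with no combinatorics left to the reader, and it is what you should supply in place of your second paragraph.
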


 We follow the proof in type $A$ given in \cite{Rie:QCohPFl}, shortening 
somewhat the proof of our Lemma \ref{l:indec}  below (Lemma~9.3 in~\cite{Rie:QCohPFl}), by using
a result of Fulton and Woodward
\cite{FuWo:SchubProds}: the quantum
product of Schubert classes is always nonzero.

 Fix a point $Q\in (\R_{>0})^{k}$ and
consider its fiber under $\pi=\pi^P$. Let us define
\begin{equation*}
R_Q:=qH^*(G/P)/(q^P_1-Q_1,\dotsc, q_k^P-Q_k).
\end{equation*}
This is the (possibly non-reduced) coordinate ring of $\pi\inv(Q)$.
Note that $R_Q$ is a $|W^P|$-dimensional algebra with basis given
by the (image of the) Schubert basis. We will use the same
notation $\si^P_w$ for the image of a Schubert basis element
from $qH^*(G/P)$ in  the quotient $R_Q$.  The proof of the following result from \cite{Rie:QCohPFl} holds in our situation verbatim.

\begin{lem}[{\cite[Lemma~9.2]{Rie:QCohPFl}}]\label{l:EV}
Suppose $\mu\in R_Q$ is a nonzero simultaneous eigenvector for all
linear operators $R_Q\to R_Q$ which are defined by multiplication by
elements in $R_Q$. Then there exists a point $p\in\pi\inv(Q)$ such that (up to
a scalar factor)
\begin{equation*}
\mu=\sum_{w\in W^P}\si^P_w(p)\,\si^P_{PD(w)}.
\end{equation*}\qed
\end{lem}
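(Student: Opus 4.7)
The plan is to use the Frobenius structure that quantum Poincaré duality \eqref{E:FW} induces on $R_Q$. The hypothesis on $\mu$ first produces a character of $R_Q$: for each $f\in R_Q$ there is a scalar $\chi(f)$ with $f\cdot\mu = \chi(f)\mu$, and $\chi$ is automatically a $\C$-algebra homomorphism (linearity is immediate, and multiplicativity follows from $(fg)\mu = f(g\mu) = \chi(f)\chi(g)\mu$, while $\chi(1)=1$ since $\mu\ne 0$). Since $R_Q$ is the coordinate ring of the scheme-theoretic fibre $\pi\inv(Q)$, the character $\chi$ is evaluation at some closed point $p\in\pi\inv(Q)$, and in particular $\chi(\sigma_w^P) = \sigma_w^P(p)$ for every $w\in W^P$.

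Next, I would expand $\mu$ in the Schubert basis using duality. Because $qH^*(G/P)$ is free over $\C[q_1^P,\dotsc,q_k^P]$ on the Schubert basis, and \eqref{E:FW} is $\C[q_1^P,\dotsc,q_k^P]$-bilinear, the Schubert orthogonality $\langle\sigma_w^P\cdot\sigma_v^P\rangle_{\mathbf q}=\delta_{w,PD(v)}$ descends to a perfect pairing on $R_Q$ with $\{\sigma_{PD(w)}^P\}$ dual to $\{\sigma_w^P\}$. Writing $\mu=\sum_w a_w\,\sigma_{PD(w)}^P$, one reads off
$$a_w \;=\; \langle\sigma_w^P\cdot\mu\rangle_{\mathbf q} \;=\; \chi(\sigma_w^P)\,\langle\mu\rangle_{\mathbf q} \;=\; \sigma_w^P(p)\,\langle\mu\rangle_{\mathbf q},$$
so $\mu \;=\; \langle\mu\rangle_{\mathbf q}\sum_{w\in W^P}\sigma_w^P(p)\,\sigma_{PD(w)}^P$.

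It remains to check that the scalar $c:=\langle\mu\rangle_{\mathbf q}$ is nonzero, which is the only step where the hypothesis $\mu\ne 0$ is genuinely needed. If $c=0$, then the formula above forces every $a_w=0$ and therefore $\mu=0$, contradicting the hypothesis; dividing through by $c$ yields the stated equality up to a scalar. The argument is essentially formal, and the one point I expect to warrant a brief justification is the descent of the Poincaré pairing to a perfect pairing on the possibly non-reduced quotient $R_Q$, which is immediate from the freeness of $qH^*(G/P)$ over $\C[q_1^P,\dotsc,q_k^P]$ and the $\C[q_1^P,\dotsc,q_k^P]$-bilinearity of $\langle\,\cdot\,\rangle_{\mathbf q}$.
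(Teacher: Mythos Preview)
Your argument is correct. The paper does not supply its own proof of this lemma; it quotes \cite[Lemma~9.2]{Rie:QCohPFl} and remarks that the proof there holds verbatim. Your write-up is exactly the standard Frobenius-algebra argument underlying that lemma: the simultaneous eigenvector produces a character $\chi\colon R_Q\to\C$, hence a point $p\in\pi^{-1}(Q)$, and quantum Poincar\'e duality~\eqref{E:FW} lets you read off the Schubert coefficients of $\mu$ as $\sigma^P_w(p)$ times the common scalar $\langle\mu\rangle_Q$. The only minor remark is notational: once you pass to $R_Q$ you are using the specialized pairing $\langle\,\cdot\,\rangle_Q$ rather than $\langle\,\cdot\,\rangle_{\mathbf q}$, as in the paper's later use of this lemma.
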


Set
\begin{equation*}
\sigma:=\sum_{w\in W^P}\si_w^P\in R_Q.
\end{equation*}
Suppose the multiplication operator on $R_Q$ defined by
multiplication by $\si$ is given by the matrix
$M_\si=(m_{v,w})_{v,w\in W^P}$ with respect to the Schubert
basis. That is,
\begin{equation*}
\si\cdot\si_v^P=\sum_{w\in W^P} m_{v,w}\si^P_w.
\end{equation*}
Then since $Q\in \R_{>0}^k$ and by positivity of the structure
constants it follows that $M_\si$ is a nonnegative matrix.

\begin{lem} [{\cite[Lemma~9.3]{Rie:QCohPFl}}]\label{l:indec}
$M_\si$ is an indecomposable matrix.
\end{lem}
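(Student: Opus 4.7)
\emph{Proof proposal.} The plan is to prove the stronger statement that every entry of $M_\sigma$ is strictly positive, from which indecomposability is immediate. Throughout, all computations are performed in $R_Q$, i.e.\ with $\mathbf q$ specialized to $Q\in\R^k_{>0}$. Fix $v,u\in W^P$; by definition $m_{v,u}$ equals the coefficient of $\sigma^P_u$ in $\sigma\cdot\sigma^P_v=\sum_{w\in W^P}\sigma^P_w\cdot\sigma^P_v$. The quantum Poincar\'e duality \eqref{E:FW} extracts any Schubert coefficient as a triple pairing, so
\[
m_{v,u}=\sum_{w\in W^P}\langle\sigma^P_w\cdot\sigma^P_v\cdot\sigma^P_{PD(u)}\rangle_{\mathbf q}.
\]

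Next I would expand $\sigma^P_v\cdot\sigma^P_{PD(u)}=\sum_{x\in W^P}a_x\,\sigma^P_x$ in $R_Q$. Each coefficient $a_x=\sum_{\mathbf d}\langle\sigma^P_{PD(x)},\sigma^P_v,\sigma^P_{PD(u)}\rangle_{\mathbf d}\,Q^{\mathbf d}$ is a nonnegative real number, being a nonnegative combination of $3$-point Gromov-Witten invariants with positive weights $Q^{\mathbf d}$. Substituting and using $\langle\sigma^P_w\cdot\sigma^P_x\rangle_{\mathbf q}=\delta_{w,PD(x)}$, each triple pairing collapses to $a_{PD(w)}$, so
\[
m_{v,u}=\sum_{w\in W^P}a_{PD(w)}=\sum_{x\in W^P}a_x,
\]
since $w\mapsto PD(w)$ is an involution of $W^P$. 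Thus $m_{v,u}$ equals the total Schubert mass of $\sigma^P_v\cdot\sigma^P_{PD(u)}$ in $R_Q$.

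To finish, I would invoke the theorem of Fulton and Woodward \cite{FuWo:SchubProds}: the quantum product $\sigma^P_v\cdot\sigma^P_{PD(u)}$ is nonzero in $qH^*(G/P)$, so at least one Gromov-Witten invariant $\langle\sigma^P_{PD(x)},\sigma^P_v,\sigma^P_{PD(u)}\rangle_{\mathbf d}$ is strictly positive. Since $Q\in\R^k_{>0}$, the corresponding $a_x$ is strictly positive while every $a_y\ge 0$; hence $m_{v,u}>0$. Therefore every entry of $M_\sigma$ is strictly positive, which is far stronger than indecomposability.

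The only real obstacle is spotting that the Poincar\'e duality reorganization aggregates the row of $M_\sigma$ indexed by $v$ into the total Schubert mass of a two-factor quantum product; once that is visible, the nonvanishing theorem of Fulton-Woodward delivers positivity of every entry and the rest is routine bookkeeping with the quantum Poincar\'e pairing.
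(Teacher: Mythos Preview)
Your argument is correct and is genuinely different from the paper's proof. You prove the stronger fact that every entry $m_{v,u}$ of $M_\sigma$ is strictly positive, by rewriting $m_{v,u}$ via quantum Poincar\'e duality as the total Schubert mass $\sum_x a_x$ of $\sigma^P_v\cdot\sigma^P_{PD(u)}$ in $R_Q$, and then invoking Fulton--Woodward's nonvanishing theorem together with positivity of the Gromov--Witten invariants and of $Q$ to guarantee that some $a_x>0$ while all $a_y\ge 0$.

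The paper instead argues by contradiction: assuming reducibility, it finds a proper nonempty $M_\sigma$-invariant coordinate subspace indexed by $V\subsetneq W^P$, uses Fulton--Woodward (via the quantum product $\sigma^P_v\cdot\sigma^P_{w_0^P}$) only to force $1\in V$, and then uses the classical Chevalley formula to walk down the Bruhat order from any $w\notin V$ to the identity, contradicting $1\in V$. Your route is shorter and yields a strictly stronger conclusion (entrywise positivity rather than mere indecomposability); the paper's route exhibits more explicitly the role of the Bruhat order and only needs Fulton--Woodward for a single special product, but is otherwise more circuitous. One minor point: your displayed pairings should carry the subscript $Q$ rather than $\mathbf q$, since you are working in $R_Q$; this is cosmetic given your opening disclaimer.
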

\begin{proof}
Suppose indirectly that the matrix $M_\si$ is
reducible. Then there exists a nonempty, proper subset $V\subset
W^P$ such that the span of $\{\si_v\ | \ v\in V \}$ in $R_Q$ is
invariant under $M_\si$. We will derive a contradiction to this
statement.

First let us show that $1\in V$. Suppose not. Since $V\ne\emptyset$
we have a $v\ne 1$ in $V$. Since $1\notin V$, the coefficient of
$\si_{1}$ in $\si_{w}\cdot\si_{v}$ must be zero for all $w\in
W^P$, or equivalently
\begin{equation}\label{e:zero}
\left\langle\,\si_w\cdot\si_{v}\cdot \si_{w_0^P}\,\right\rangle_Q=0
\end{equation}
for all $w\in W^P$. Here by the bracket $\left\langle\,\quad \,\right\rangle_Q$ we mean 
$\left\langle\,\quad \,\right\rangle_{\mathbf q}$ evaluated at $Q$. 
But this \eqref{e:zero} implies
$\langle\si_w\cdot\si_{v}\cdot \si_{w_0^P}\rangle_\mathbf
q=0$, since the latter is a nonnegative polynomial in the
$q_i^P$'s which evaluated at $Q\in \R_{>0}^k$ equals $0$.
Therefore $\si_{v}\cdot\si_{w_0^P}=0$ in $qH^*(G/P)$, by
quantum Poincar\'e duality. This leads to a contradiction,
since by work of W.~Fulton and C.~Woodward
\cite{FuWo:SchubProds} no two Schubert classes in $qH^*(G/P)$ ever
multiply to zero.

So $V$ must contain $1$. Since $V$ is a proper subset of $W^P$ we
can find some $w\notin V$. In particular, $w\ne 1$. It is a
straightforward exercise that given $1\ne w\in W^P$ there exists
$\alpha\in \Delta_+^P$ and $v\in W^P$ such that
\begin{equation*}
w=v r_\alpha, \quad\text{and}\quad \ell(w)=\ell(v)+1.
\end{equation*}
Now $\alpha\in \Delta_+^P$ means there exists $i\in I^P$ such
that $\langle \alpha,\omega_{i}^\vee\rangle\ne 0$. And hence by the
(classical) Chevalley Formula we have that
$\si_{s_{i}}\cdot\si_v$ has $\si_w$ as a summand. But if
$w\notin V$ this implies that also $v\notin V$, since $\si\cdot
\si_v$ would have summand $\si_{s_{i}}\cdot\si_v$ which has
summand $\si_w$. Note that there are no cancellations with other
terms by positivity of the structure constants.

By this process we can find ever smaller elements of $W^P$ which
do not lie in $V$ until we end up with the identity element, so a
contradiction.
\end{proof}

Given the indecomposable nonnegative matrix $M_\si $,
then by Perron-Frobenius theory (see e.g. \cite{Minc:NonnegMat}
Section 1.4) we know the following. \vskip .3cm
\parbox[c]{12cm}{
The matrix $M_\si$ has a positive eigenvector $\mu$ which is
unique up to scalar (positive meaning it has positive coefficients
with respect to the standard basis). Its eigenvalue, called the
Perron-Frobenius eigenvalue, is positive, has maximal absolute
value among all eigenvalues of $M_{\sigma}$, and has algebraic
multiplicity $1$. The eigenvector $\mu$ is unique even in the
stronger sense that any nonnegative eigenvector of $M_\sigma$ is a
multiple of $\mu$.} \vskip .3cm

Suppose $\mu$ is this eigenvector chosen normalized such that
$\left<\mu\right>_Q=1$. Then since the eigenspace containing $\mu$
is $1$--dimensional, it follows that $\mu$ is joint eigenvector
for all multiplication operators of $R_Q$. Therefore by
Lemma~\ref{l:EV} there exists a $p_0\in \pi\inv(Q)$ such that
\begin{equation*}
\mu=\sum_{w\in W^P}\si^P_w(p_0)\, \si^P_{PD(w)}.
\end{equation*}
Positivity of $\mu$ implies that $\si^P_w(p_0)\in\R_{>0}$ for all
$w\in W^P$. Of course all of the
$q_i(p_0)=Q_i$ are positive too. Hence $p_0\in X_{P,>0}^{\af}$. Also the point $p_0$
in the fiber over $Q$  with the property that all $\si^P_w(p_0)$ are positive is unique.  
Therefore 
\begin{equation}\label{e:homeo}
X^{\af}_{P,>0}\To \R_{>0}^{k}
\end{equation}
is a bijection.

\section{Proof of Theorem \ref{t:main}.(2)}\label{s:proof2}
We establish Theorem \ref{t:main}(2) for $X^{\af}_{P,>0}$ instead of $X_{P,>0}$.  In Proposition \ref{p:closures}, we will we establish the equality $X^{\af}_{P,>0} = X_{P,>0}$.

  Since $qH^*(G/P)$ is free over $\C[q_1^P,\ldots,q_k^P]$, it follows that $\pi^P$ is flat.
Let $Q=\pi^P(p_0)$. Let $R=qH^*(G/P)$ and $I\subset R$ the ideal
$(q_1-Q_1,\dotsc, q_k-Q_k)$.
The Artinian ring $R_Q=R/I$ is isomorphic to the sum of local
rings $R_Q\cong\bigoplus_{x\in (\pi^P)\inv(Q)} R_x/IR_x$. And for
$x=p_0$ the local ring $R_{p_0}/IR_{p_0}$ corresponds in $R_Q$ to
the Perron--Frobenius eigenspace of the multiplication operator
$M_\sigma$ from the above proof. Since this is a one-dimensional
eigenspace (with algebraic multiplicity one) we have that
$\dim(R_{p_0}/IR_{p_0})=1$.  It follows that the map $\pi^P$ is unramified at the point $p_0$.  Thus, for example by \cite[Ex.III.10.3]{Hartshorne}, $\pi^P$ is etale at $p_0$.  Since $(\C^*)^k$ is smooth, it follows that $X_P$ is smooth at $p_0$.

\section{Proofs of Theorem \ref{thm:three} and Theorem \ref{t:main}(1)}
\begin{lem}\label{l:semi}
$X_{\geq 0}$ and $X^\af_{\geq 0}$ are closed subsemigroups of $X$.
\end{lem}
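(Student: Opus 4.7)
The plan is to treat the two sets separately, using the appropriate algebraic structure in each case.

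For $X_{\geq 0}$, I would note that $X$ is an abelian algebraic subgroup of $U^\vee_-$, so the group law on $X$ is the restriction of multiplication on $U^\vee_-$. By definition, Lusztig's totally nonnegative part $U^\vee_{-,\geq 0}$ is the subsemigroup of $U^\vee_-$ generated by the rays $\{y_i(t)\mid i\in I,\ t\in \R_{\geq 0}\}$, and it is a closed subset of $U^\vee_-$. Therefore $X_{\geq 0}=X\cap U^\vee_{-,\geq 0}$ is closed in $X$ and is closed under multiplication, since both $X$ and $U^\vee_{-,\geq 0}$ are.

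For $X^{\af}_{\geq 0}$, the key input is the Ginzburg--Peterson Hopf isomorphism $H_*(\Gr_G)\cong \mathcal O(X)$. Under this isomorphism, the group multiplication on $X$ corresponds to the coproduct on $H_*(\Gr_G)$, which by Hopf duality is determined by the cup product on $H^*(\Gr_G)$. More precisely, if one expands
$$
\xi^u\cup\xi^v=\sum_{w\in W_\af^-} c^{u,v}_w\,\xi^w,
$$
then $\Delta(\xi_w)=\sum_{u,v} c^{u,v}_w\,\xi_u\otimes\xi_v$ and hence
$$
\xi_w(xy)=\sum_{u,v} c^{u,v}_w\,\xi_u(x)\,\xi_v(y)\qquad \text{for all $x,y\in X$.}
$$
By the Kumar--Nori result \cite{KuNo:pos} (already invoked in Lemma \ref{l:fcheck}) the structure constants $c^{u,v}_w$ are nonnegative integers, so $x,y\in X^{\af}_{\geq 0}$ forces $xy\in X^{\af}_{\geq 0}$. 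The identity $e\in X$ satisfies $\xi_w(e)=\delta_{w,\mathrm{id}}\geq 0$, so $e\in X^{\af}_{\geq 0}$ as well. Closedness is automatic: each $\xi_w$ is regular on $X$, and $X^{\af}_{\geq 0}$ is cut out by the closed conditions $\xi_w\geq 0$ as $w$ ranges over $W_\af^-$.

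I do not expect any substantial obstacle. The only point requiring care is keeping the direction of Hopf duality straight, so that pointwise multiplication of functions on $X$ is matched with the cup product in $H^*(\Gr_G)$ (rather than with the product in $H_*(\Gr_G)$); after that the positivity is a direct consequence of Kumar--Nori.
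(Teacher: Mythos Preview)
Your proof is correct and follows essentially the same approach as the paper. For $X_{\geq 0}$ you use that $U^\vee_{-,\geq 0}$ is a closed subsemigroup and intersect with the subgroup $X$, and for $X^{\af}_{\geq 0}$ you use the Hopf isomorphism to identify the coproduct structure constants with the cup product constants in $H^*(\Gr_G)$ and invoke Kumar--Nori positivity; this is exactly what the paper does, though you spell out the Hopf duality and the closedness of $X_{\geq 0}$ a bit more carefully than the paper.
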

\begin{proof}
For $X_{\geq 0}$ this follows from the fact that $(U^\vee_-)_{> 0}$ is a subsemigroup of $U^\vee_-$, and $X \subset U^\vee_-$ is a subgroup.  
For $X^\af_{\geq 0}$, closed-ness follows from the definition.  Suppose $x, y \in X^\af_{\geq 0}$.
 Then for any affine Schubert class $\xi_w$, we have $$\xi_w(xy) =
\Delta(\xi_w)(x \otimes y) = \sum_{v,u}
c_{v,u}^w \xi_v(x) \otimes \xi_u(y) \geq 0$$ where $\Delta$ denotes the coproduct of $H_*(\Gr_G)$, and $c_{v,u}^w \geq 0$ are  nonnegative integers \cite{KuNo:pos}.  Thus $xy \in X^\af_{\geq 0}$.
\end{proof} 

 The first statement of Theorem \ref{thm:three} follows from Corollary \ref{c:AFtoTN} and the following proposition.
\begin{prop} \label{p:connected} The totally positive part and the affine Schubert positive part
of $X$ agree,
\begin{equation*}
X^{\af}_{>0}=X_{>0}.
\end{equation*}
\end{prop}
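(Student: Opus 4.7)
The plan is to establish the nontrivial inclusion $X_{>0}\subseteq X^{\af}_{>0}$; the reverse is Corollary~\ref{c:AFtoTN}. The central tool is the bijection $\pi^B_{>0}:X^{\af}_{>0}\to \R^n_{>0}$ obtained in Section~8, together with its Perron--Frobenius characterization: for each $Q\in\R^n_{>0}$ the point $p_0(Q):=(\pi^B_{>0})^{-1}(Q)\in X^{\af}_{>0}$ is the unique point of the whole fiber $(\pi^B)^{-1}(Q)\subset X_B$ at which every quantum Schubert class $\sigma^B_w$ takes a positive value. To apply this, I first need $X_{>0}\subseteq X_B$ and $\pi^B(X_{>0})\subseteq \R^n_{>0}$. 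The former is immediate from $U^\vee_{-,>0}\subseteq B^\vee_+\dot w_0 B^\vee_+$. For the latter, recall from the remark at the end of Section~\ref{PetersonTheory} that for dominant $\lambda\in Q^\vee$ the quantum parameter $q_\lambda$ on $X_B$ equals the matrix coefficient $x\mapsto \ip{x\cdot v^+_\lambda,v^-_\lambda}$, a product of chamber minors which is positive on $X_{>0}$ by Proposition~\ref{P:BZ}; together with Kostant's expressions \cite{Kos:QCoh} for the individual $q_i$, this yields $q_i(x)>0$ for every $x\in X_{>0}$.

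Given $x\in X_{>0}$ with $Q:=\pi^B(x)$, the strategy is to force $x=p_0(Q)$ by verifying $\sigma^B_w(x)>0$ for every $w\in W$. For simple reflections, Kostant's formula \cite[(119)]{Kos:QCoh} realizes $\sigma^B_{s_i}$ as a positive ratio of chamber minors, so $\sigma^B_{s_i}(x)>0$ again by Proposition~\ref{P:BZ}. The extension of this positivity to arbitrary $w\in W$---equivalently, the inclusion $X_{>0}\subseteq X^{\q}_{>0}$ asserted in Theorem~\ref{thm:three}---is the main obstacle, since there is no elementary positive formula for $\sigma^B_w$ as a ratio of chamber minors.

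To overcome this obstacle I would study the evaluation vector $\mu_x:=\sum_{w\in W}\sigma^B_w(x)\,\sigma^B_{\PD(w)}\in R_Q$. By Lemma~\ref{l:EV}, $\mu_x$ is a joint eigenvector for every multiplication operator on $R_Q$, and in particular for $M_\sigma$ with $\sigma=\sum_{w\in W}\sigma^B_w$. By Lemma~\ref{l:indec} the matrix $M_\sigma$ is indecomposable nonnegative, so by Perron--Frobenius its Perron--Frobenius eigenspace is one-dimensional and spanned by $\mu_{p_0(Q)}$. The remaining task is to show that $\mu_x$ lies in this eigenspace. A natural route is to argue that $\mu_x$ is nonnegative in the Schubert basis, for instance via a density argument that $X^{\af}_{>0}$ is dense in $X_{>0}$ (using the semigroup property of $X^{\af}_{\geq 0}$ from Lemma~\ref{l:semi} and continuity of the $\sigma^B_w$), yielding $\sigma^B_w(x)\geq 0$ by passage to the limit from $X^{\af}_{>0}$. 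The Perron--Frobenius theorem then forces $\mu_x=c\,\mu_{p_0(Q)}$ for some $c\geq 0$; comparing the coefficient of $\sigma^B_{\PD(e)}=\sigma^B_{w_0}$, which equals $\sigma^B_e(x)=1$, pins down $c=1$ and gives $x=p_0(Q)\in X^{\af}_{>0}$, completing the proof.
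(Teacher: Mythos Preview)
Your proposal has a genuine gap at the density step. To conclude $\sigma^B_w(x)\ge 0$ for $x\in X_{>0}$ you want $X^{\af}_{>0}$ to be dense in $X_{>0}$, and you invoke the semigroup property of $X^{\af}_{\ge 0}$ from Lemma~\ref{l:semi}. But that property only lets you multiply elements already known to lie in $X^{\af}_{\ge 0}$: to approximate a given $x\in X_{>0}$ by products $x u_t$ with $u_t\in X^{\af}_{>0}$ and $u_t\to e$, you would need $x\cdot X^{\af}_{>0}\subseteq X^{\af}_{>0}$, which requires $x\in X^{\af}_{\ge 0}$ --- precisely what you are trying to prove. At this stage of the paper the inclusion $X_{>0}\subset X^{\q}_{>0}$ has not been established independently; it is \emph{deduced from} Proposition~\ref{p:connected}, so there is no non-circular route to the nonnegativity of $\mu_x$. (Once that nonnegativity is granted, your Perron--Frobenius endgame is correct.)

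The paper closes the gap by a purely topological argument instead. One observes that the inclusion $X^{\af}_{>0}\hookrightarrow X_{>0}$ is open (on $X_B$, affine Schubert positivity amounts to the finitely many conditions $\sigma^B_w>0$ and $q_i>0$), and that it is also \emph{closed}: since $\pi^B$ restricts to a homeomorphism $g:X^{\af}_{>0}\to\R^n_{>0}$ by \eqref{e:homeo} and Section~\ref{s:proof2}, the map $g^{-1}\circ\pi^B:X_{>0}\to X^{\af}_{>0}$ is a continuous retraction, so $X^{\af}_{>0}$ is a retract, hence closed, in $X_{>0}$. Thus $X^{\af}_{>0}$ is a nonempty clopen subset of $X_{>0}$, and it remains only to show that $X_{>0}$ is connected. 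This last step uses the semigroup property of $X_{>0}$ (not of $X^{\af}_{\ge 0}$): for $u\in X_{>0}$ set $u_t:=t^{-\rho}ut^{\rho}$, so $u_t\in X_{>0}$ for $t>0$ and $u_t\to e$ as $t\to 0$; then the paths $t\mapsto u\,u'_t$ and $t\mapsto u_t\,u'$ join any two points $u,u'\in X_{>0}$ through the common point $uu'$.
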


\begin{proof}
Our proof is identical to the proof of Proposition~12.2 from \cite{Rie:QCohPFl}.

By \cite[Section 5]{Kos:QCoh} or combining Theorem \ref{e:PetIsoLoopHom} with \cite[Proposition 1.9]{Gin:GS}, we see that each $q_i$ is a ratio of `chamber minors' and so $\pi^B$ takes positive values on $X_{>0}$.  By  Corollary \ref{c:AFtoTN} we have the following commutative diagram
\begin{equation*}
\begin{matrix}X^{\af}_{>0}&\hookrightarrow &X_{>0}\\
\qquad   \searrow & & \swarrow\qquad \\
      &\R_{>0}^{n}&
\end{matrix}
\end{equation*}
where the top row is clearly an open inclusion and the maps going
down are restrictions of $\pi^B$. By \eqref{e:homeo} and Section \ref{s:proof2}, the left hand map to $\R_{>0}^{n}$ is a homeomorphism. It follows from this and elementary point set
topology that $X^{\af}_{>0}$ must be closed inside $X_{>0}$. So
it suffices to show that $X_{>0}$ is connected.

For an arbitrary element $u\in X$ and $t\in \R$, let
\begin{equation}\label{e:ut}
u_t:=t^{-\rho} u t^{\rho},
\end{equation}
where $t\mapsto t^\rho$ is the one-parameter subgroup of
$T^\vee$ corresponding to the coroot $\rho$ (a coroot relative to $G^\vee$).
Then $u_0=e$ and $u_1=u$, and if $u\in X_{>0}$, then so is $u_t$
for all positive $t$.

Let $u, u'\in X_{>0}$ be two arbitrary points. Consider the
paths
\begin{eqnarray*}
\gamma\ : {[0,1]\to X_{>0} ~,}& \gamma(t) =u u'_t\ \\
\gamma' : {[0,1]\to X_{>0} ~,}& \,\gamma'(t)=u_t u'.
\end{eqnarray*}
Note that these paths lie entirely in $X_{>0}$ since $X_{>0}$
is a semigroup (Lemma \ref{l:semi}). Since $\gamma$ and $\gamma'$ connect $u$
and $u'$, respectively, to $uu'$, it follows that $u$ and $u'$
lie in the same connected component of $X_{>0}$, and we are
done.
\end{proof}
%

The second statement of Theorem \ref{thm:three} and Theorem \ref{t:main}(1) follow from:
\begin{prop}\label{p:closures}
We have $\overline{X_{> 0}} = X_{\geq 0} =X \cap U^\vee_{-,\geq 0}$.  We have $\overline{X^\af_{>0}}=X^\af_{\geq 0}$.  Thus 
$$X_{P,>0}=X^{\af}_{P,>0}.$$
\end{prop}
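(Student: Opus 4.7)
The plan is to prove both closure statements by constructing an explicit perturbation toward the identity within the strictly positive parts, then derive $X_{P,>0}=X^{\af}_{P,>0}$ by intersecting with $X_P$. The easy containments $\overline{X_{>0}}\subseteq X_{\ge 0}$ and $\overline{X^{\af}_{>0}}\subseteq X^{\af}_{\ge 0}$ follow at once from Lemma~\ref{l:semi}, since the target sets are closed and contain the respective positive parts.

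For the reverse, I would use the one-parameter family $y_\epsilon:=\exp(\epsilon f^\vee)\in X$ for $\epsilon>0$. Lemma~\ref{l:SchubPosx} applied to $\epsilon f^\vee$, together with Corollary~\ref{c:AFtoTN}, gives $y_\epsilon\in X^{\af}_{>0}\cap X_{>0}$, and clearly $y_\epsilon\to e$ as $\epsilon\to 0^+$. The key step is to show that right multiplication by $y_\epsilon$ carries each of $X^{\af}_{\ge 0}$ and $X_{\ge 0}$ into the corresponding strictly positive part, so that $x\,y_\epsilon\to x$ exhibits an arbitrary nonnegative point as a limit of strictly positive ones.

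For the affine version, cocommutativity of the Hopf algebra $H_*(\Gr_G)$ together with the counit $\xi_e=1$ forces the coproduct to take the form
$$\Delta(\xi_w)=\xi_w\otimes 1+1\otimes\xi_w+\sum_{u,v\ne e}c^w_{u,v}\,\xi_u\otimes\xi_v,$$
with $c^w_{u,v}\ge 0$ by Kumar-Nori positivity \cite{KuNo:pos}. Evaluating on $x\otimes y_\epsilon$ with $x\in X^{\af}_{\ge 0}$ yields $\xi_w(xy_\epsilon)\ge \xi_w(y_\epsilon)>0$ for every $w\in W_\af^-$, proving $x\,y_\epsilon\in X^{\af}_{>0}$ and hence $\overline{X^{\af}_{>0}}=X^{\af}_{\ge 0}$. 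For the total positivity version, since $X$ is a subgroup of $U^\vee_-$ and $X_{\ge 0}=X\cap U^\vee_{-,\ge 0}$, $X_{>0}=X\cap U^\vee_{-,>0}$, the claim reduces to the standard fact from Lusztig's theory \cite{Lus:TotPos94} that $U^\vee_{-,\ge 0}\cdot U^\vee_{-,>0}\subseteq U^\vee_{-,>0}$ (the open double coset $B^\vee_+\dot w_0 B^\vee_+$ is preserved under left multiplication by $U^\vee_{-,\ge 0}$); this gives $\overline{X_{>0}}=X_{\ge 0}$.

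Finally, combining with Proposition~\ref{p:connected} (which gives $X^{\af}_{>0}=X_{>0}$) produces the chain $X_{\ge 0}=\overline{X_{>0}}=\overline{X^{\af}_{>0}}=X^{\af}_{\ge 0}$, whence $X_{P,>0}=X_P\cap X_{\ge 0}=X_P\cap X^{\af}_{\ge 0}=X^{\af}_{P,>0}$. The step I expect to require the most care is the invocation of the Lusztig product property, since unlike its affine analogue it does not fall out of a pure Hopf-algebraic computation but must be imported from classical total positivity theory; everything else is a short chase through already-established facts.
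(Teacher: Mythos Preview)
Your proof is correct and follows essentially the same approach as the paper: the paper also multiplies a nonnegative point by a curve in $X_{>0}$ approaching the identity, using $u_t=t^{-\rho}ut^{\rho}$ for any $u\in X_{>0}$, which for $u=\exp(f^\vee)$ is exactly your $y_\epsilon$ (since $\mathrm{Ad}(t^{-\rho})f^\vee=tf^\vee$). Your explicit coproduct computation showing $X^{\af}_{\ge 0}\cdot X^{\af}_{>0}\subseteq X^{\af}_{>0}$ and your invocation of the Lusztig product property $U^\vee_{-,\ge 0}\cdot U^\vee_{-,>0}\subseteq U^\vee_{-,>0}$ make precise what the paper leaves implicit in its appeal to Lemma~\ref{l:semi}.
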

\begin{proof}
Suppose $x\in X_{\geq 0}$.  Then for any $u\in X_{>0}$, we have $u_t \in X_{>0}$ for all positive $t$, 
where $u_t$ is defined in \eqref{e:ut}.  The
curve $t\mapsto x(t)=xu_t$ starts at $x(0)=x$ and lies in
$X_{>0}$ for all $t>0$. Therefore $x\in \overline{X_{>0}}$ as desired.

The same proof holds for $X^\af_{>0}$, using Lemma \ref{l:semi} and the fact that $u_t \in X_{>0} = X^\af_{>0}$ (Proposition \ref{p:connected}).
\end{proof}

%

\section{Proof of Theorem \ref{t:main}(3)}
To define $\Delta_{\geq 0}$, we set $\Delta_i = \xi_{t_{m_i \omega_i^\vee}}$, where $m_i$ is chosen so that $m_i \omega_i^\vee \in Q^\vee$.  Then $\Delta_{\geq 0} = (\Delta_1,\ldots,\Delta_{n})$.

It follows from the explicit description \cite{LaSh:QH} of $\pi_P(t_\la)$ and $\eta_P(\la)$ of Theorem \ref{t:PetIsoLoop} that for each $i$, some power of $q^P_i$ is equal to $\xi_\la \xi_\mu^{-1}$ on $X_P$, for certain $\la, \mu \in Q^\vee$.  Furthermore, the map $$\pi^P_{>0}=(q^P_1,\dotsc, q^P_k):X_{P,>0}\to \R^{k}_{>0}$$ is related to  the map $$\Delta^P_{>0} = (\Delta_{i_1},\Delta_{i_2},\ldots,\Delta_{i_k}): X_{P,>0}\to \R^{k}_{>0}$$ by a homeomorphism of $\R^k_{>0}$, where $I^P = \{i_1,i_2,\ldots,i_k\}$.
But $X_{\ge 0}=\bigsqcup X_{P,>0}$, so we have that
\begin{equation*}
\Delta_{\ge 0}: X_{\ge 0}\To \R_{\ge 0}^{n}
\end{equation*}
is bijective. So $\Delta_{\ge 0}$ is continuous and bijective.
Since $\Delta$ is finite it follows that it is closed, that is, takes closed sets to closed
sets. (This holds true also in the Euclidean topology, since the 
preimage of a bounded set under a finite map must be bounded, compare
\cite[Section~5.3]{Shafarevich:AG1}). Since $X_{\ge 0}$ 
is closed in $X$ the restriction $\Delta_{\ge 0}$ of $\Delta$ to $X_{\ge 0}$ is also 
closed. Therefore $\Delta_{\ge 0}\inv$ is continuous.


\section{Proof of Proposition \ref{P:genBZ}}\label{s:genBZ}
It suffices to prove the Proposition for $G$ of adjoint type.   Call a dominant weight $\lambda$ {\it allowable} if it is 
a character of the maximal torus of adjoint type $G$.

We note that the tensor product $V = V_\la \otimes V_\mu$ of two irreducible representations inherits a {\it tensor Shapovalov form} $\ip{\cdot,\cdot}$ defined by $\ip{v \otimes w, v' \otimes w'} = \ip{v,v'}\ip{w,w'}$.  This is again a positive-definite non-degenerate symmetric form on $V_{\la,\R} \otimes V_{\mu,\R}$ satisfying \eqref{E:adjoint}.  It follows from \eqref{E:adjoint} that if $V_\nu, V_{\rho} \subset V$ are irreducible subrepresentations, and $\nu \neq \rho$ then $\ip{v,v'} = 0$ for $v \in V_\nu$ and $v' \in V_\rho$. Thus if the highest-weight representation $V_\nu$ occurs in $V$ with multiplicity one, the restriction of $\ip{\cdot,\cdot}$ from $V$ to $V_\nu$ must be a positive-definite non-degenerate symmetric bilinear form satisfying $\eqref{E:adjoint}$, and thus must be a multiple of the Shapovalov form.  By scaling the inclusion $V_\nu \subset V$, we shall always assume that the restricted form is the Shapovalov form.  The above comments extend to the case of $n$-fold tensor products.

\subsection{Type $A_n$}
We shall establish the criterion used in Proposition \ref{P:BZ}.  First suppose $n$ is even.  Let $V_{\omega_i}$ be a fundamental representation, and let $v_{\omega_i}^+ \in V_{\omega_i}$ be the highest weight vector, and $v = \dot w \cdot v_{\omega_i}^+$ an extremal weight vector.  The weight space with weight $\dot w \cdot (n+1)\omega_i$ is extremal (and one-dimensional) in $V_{(n+1) \omega_i}$, and $V_{(n+1) \omega_i}$ is an irreducible representation for $PSL_{n+1}(\C)$.  Thus for $y$ as in Proposition~\ref{P:BZ},
$$
\ip{v,y \cdot v^+_{\omega_i}}^{n+1} = 
\ip{v^{\otimes {(n+1)}}, y\cdot (v^+_{\omega_i})^{\otimes {(n+1)}}} > 0.
$$
Since $n$ is even, this implies that $\ip{v,y \cdot v_{\omega_i}^+} > 0$.

For odd $n$, let us fix $w \in W$, and consider the set of signs $a_i =
\sign(\ip{\dot w \cdot v_{\omega_i}, y \cdot v_{\omega_i}})$.  We want to prove that the $a_i$ are
all $+1$.  Note that a sum of (not necessarily distinct) fundamental weights, 
$\omega_{i_1}+ \cdots +\omega_{i_k}$, is allowable precisely if it is trivial on the center of $SL_{n+1}$, that is if $i_1 + \cdots + i_k$ is 
divisible by $n+1$. Let $(i_1,i_2,\ldots,i_k)$ be such a sequence of indices, for which 
$\omega_{i_1}+ \cdots +\omega_{i_k}$ is allowable. Then the
weight $w(\omega_{i_1}+ \cdots +\omega_{i_k})$ is an extremal weight of 
the representation $V_{\omega_{i_1}+ \cdots +\omega_{i_k}}$ of $PSL_{n+1}(\C)$, and we have 
\begin{multline*}
\ip{\dot w\cdot v^+_{\omega_{i_1}},y \cdot v_{\omega_{i_1}}^+}
\ip{\dot w\cdot v^+_{\omega_{i_2}},y \cdot v_{\omega_{i_2}}^+}\cdots
\ip{\dot w\cdot v^+_{\omega_{i_k}},y \cdot v^+_{\omega_{i_k}}} \\= 
\ip{\dot w\cdot(v^+_{\omega_{i_1}}\otimes \dotsc\otimes v^+_{\omega_{i_k}}), y\cdot (v^+_{\omega_{i_1}}\otimes \dotsc\otimes v^+_{\omega_{i_k}})} > 0.
\end{multline*}
Therefore $a_{i_1}\dotsc a_{i_k}=+1$ if $i_1+ \dotsc + i_k= n+1$. In particular $a_{i} a_1^{n+1-i}=+1$, implying that $a_i=+1$ for even $i$,
and $a_i=a_1$ for odd $i$.

We now show that $a_1 = +1$.  Let $V = V_{\omega_1}=\C^{n+1}$ with standard basis $\{v_1,\dotsc, v_{n+1}\}$, and let $Z= V^{\otimes (n+1)}$. 
If we take $v^+_{\omega_1}=v_1$ then the Shapovalov form on $V$ is the standard symmetric bilinear form given by $\ip{v_i,v_j}=\delta_{i,j}$.  
Let us consider $U =V_{(n+1)\omega_1} = \Sym^{n+1}(V)$, which occurs with multiplicity $1$ in $Z$ and has standard basis $\{v_{i_1}\dotsc v_{i_{n+1}} \ |\ 1\le i_1\le i_2\le \dotsc\le i_{n+1}\le n+1\ \}$ of symmetrized tensors,
$$
v_{i_1}\dotsc v_{i_{n+1}}=\frac{1}{(n+1)!}\sum_{\sigma\in S_{n+1}} v_{\sigma(i_1)}\otimes v_{\sigma(i_2)}\otimes\cdots\otimes v_{\sigma(i_{n+1})}.
$$ 
These are clearly orthonormal for the tensor Shapovalov form restricted to $U$, which is the Shapovalov form $\ip{\ ,\ }_U$ of $U$.
We have $\dot w\cdot  v_1=v_k$ for some $k$. Consider the vector $z=v_1^n v_k \in U$ which has weight $\dot w \cdot \omega_1 + n\omega_1$. Clearly $\ip{z,x \cdot v^+_{(n+1)\omega_1}}_U=\ip{v_1^n v_k,x \cdot v_1^{n+1}}_U >0$ for all totally positive $x \in U^-_{>0}$, and $z$ lies in a $1$-dimensional weight space of $U$. 
Therefore our assumptions imply that
$$
0 < \ip{z, y \cdot v^+_{(n+1)\omega_1}}_U=
\ip{v_1^{n}v_k, y \cdot v_1^{n+1}}_Z=
\ip{v_1, y \cdot v_1}^n \ip{v_k, y \cdot v_1}=\ip{v_k, y \cdot v_1}.
$$
Since $\ip{v_k, y \cdot v_1}_U=\ip{\dot w\cdot v_{\omega_1}^+, y \cdot v_{\omega_1}^+}_U$ this says precisely that $a_1=+1$.


\subsection{Type $B_n$}
The approach we use for the other Dynkin types can also be applied in this case, but we shall proceed using a different approach.
The adjoint group of type $B_n$ is $SO_{2n+1}(\C)$. 
We realize $SO_{2n+1}(\C)$ as subgroup of $SL_{2n+1}(\C)$ 
following Berenstein and Zelevinsky in \cite{BeZe:Chamber} by setting
$$
SO_{2n+1}(\C)=\{ A\in SL_{2n+1}(\C) \ | A J A^t=J\},
$$
for the symmetric bilinear form
$$
J=\begin{pmatrix} &  & & & 1 \\
& & &-1 &\\
& &\iddots & & \\
&-1 & & & \\
1& & & & 
  \end{pmatrix}.
$$ 
Let $\tilde e_i, \tilde f_i$ be the usual Chevalley generators
of $\mathfrak {sl}_{2n+1}$. Then we can take $e_i=\tilde e_i +\tilde e_{2n+1-i}$ and
$f_i=\tilde f_i +\tilde f_{2n+1-i}$ to be Chevalley generators
of $SO_{2n+1}(\C)$, and we have a corresponding pinning. Let $\tilde T$
denote the maximal torus of diagonal matrices in $SL_{2n+1}$ with 
character group $X^*(\tilde T)=\Z\ip{\tilde \ep_1,\dotsc,\tilde \ep_{2n+1} }/ (\sum \tilde \ep_i)$, where
$\tilde\ep_i(t)$ is the $i$-th diagonal entry of $t$.
The maximal torus $T$ of $SO_{2n+1}(\C)$ in this embedding looks like
$$
T=\left\{\left. t= \begin{pmatrix} t_1&  && &  \\
& \ddots & & & &&\\
& &t_n & && &\\
& & & 1 & &&\\
& & & &t_n\inv &&\\
& & & &&\ddots &\\
& & & & &&t_1\inv
  \end{pmatrix}\ \right |  \ t_i\in \C^*
  \right \}.
$$
The restriction of characters from $\tilde T$ to $T$ gives a map
$X^*(\tilde T)\to  X^*(T)$
whose kernel is precisely generated by the characters $\tilde\ep_i+\tilde\ep_{2n-i+2}$
for  $ 1\le i\le n$ and $\tilde\ep_{n+1}$.

By \cite{BeZe:Chamber} the totally
nonnegative part of $SO_{2n+1}(\C)$  
is the intersection of $SO_{2n+1}(\C)$ with the totally nonnegative
part of  $SL_{2n+1}(\C)$.

Consider an element $y \in U^-\subset SO_{2n+1}(\C)$ satisfying 
the condition \eqref{e:allowable}. We want to show that 
$y$ is totally positive as element of $U^-$, or equivalently
by \cite[Corollary~7.2]{BeZe:Chamber}, that $y$ is
totally positive in $\tilde U^-$.

In the Weyl group $\tilde W$ of $SL_{2n+1}$ let $w=(s_1 s_{2n})(s_2 s_{2n-1})\dotsc (s_n s_{n+1}) s_n$.
Then multiplying $w$ with itself $n$ times gives a reduced expression for the longest
element $w_0$ of $\tilde W$. By the Chamber Ansatz of \cite{BeFoZe:TotPos} we can
associate to this reduced expression a set of `chamber minors' which suffice to check
the total positivity of any element of  $\tilde U^-\subset SL_{2n+1}(\C)$. 

The chamber minors can be worked out graphically using the pseudo-line arrangement
for the reduced expression. For $w$ the pseudo-line arrangement is illustrated in Figure~\ref{f:firstw}.
We concatenate $n$ copies of this pseudo-line arrangement together
to get the relevant pseudo-line arrangement $w_0$. To every chamber in the arrangement 
we associate
a set $J\subset \{1,\dotsc, 2n+1\}$, by recording the numbers of the lines running below the chamber.
We order them, so let $j_1<\dotsc<j_k$ be the elements of $J$, and associate a minor to $J$ by setting
$$
\tilde\Delta_{J}(y):=\Delta_{J}(y^T)= \left<y^T\cdot e_{j_1}\wedge\dotsc\wedge e_{j_k}, v_{\omega_k}^+\right>=
\left<e_{j_1}\wedge\dotsc\wedge e_{j_k},y\cdot v_{\omega_k}^+\right>,
$$
where $\Delta_{J}$ is the `chamber minor' as defined in \cite{BeFoZe:TotPos}. Here
$k=|J|$ and $v_{\omega_k}^+=e_1\wedge\cdots\wedge e_k$ is the highest weight 
vector of the irreducible representation $V_{\tilde\omega_k}=\bigwedge^k \C^{2n+1}$ of $SL_{2n+1}(\C)$.  

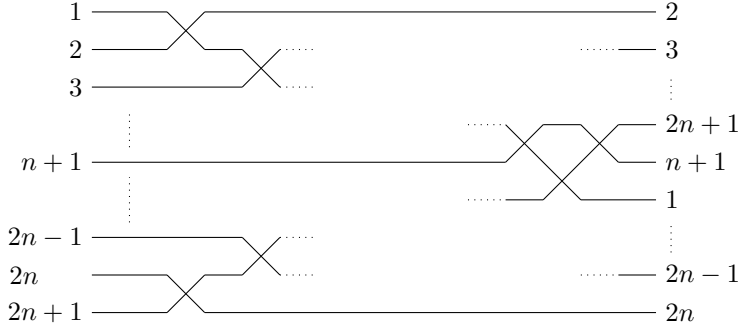
\begin{figure}\label{f:firstw}
\begin{tikzpicture}
\draw (0,3.5)-- (1,3.5);
\node [left] at (0,3.5) {$1$};
\node [left] at (0, 3) {$2$}; 
\draw (0,3)-- (1,3); 
\node [left] at (0, 2.5) {$3$};
\draw (0,2.5)-- (2,2.5);
\draw (2,2.5)--(2.5,3);
\draw (2.5,2.5)--(2,3);
\draw (1.5,3)--(2,3);
\draw [dotted] (.5, 1.7) -- (.5,2.2);
\node [left] at (0, 1.5) {$n+1$\ };
\draw (0,1.5)-- (5.5,1.5);
\draw [dotted] (.5, .7) -- (.5,1.3);
\draw (0,.5)--(2,.5);
\draw (0,0)--(1,0);
\node [left] at (0,0) {$2n\ \ \quad$}; 
\node [left] at (0,.5) {$2n-1$};
\draw (1,3.5) -- (1.5,3);
\draw (1,3) -- (1.5,3.5);
\draw (1,-.5) -- (1.5, 0);
\draw (1,0)-- (1.5, -.5);
\draw (1.5,3.5)--(7.5,3.5);
\draw (1.5,-.5) -- (7.5,-.5);
\draw (0,-.5) -- (1,-.5);
\node [left] at (0,-.5) {$2n+1$}; 
\draw (1.5,0)--(2,0);
\draw (2,0)--(2.5,.5);
\draw (2.5,0)-- (2,.5);
\draw [dotted] (2.5,0)--(3,0);
\draw [dotted]  (2.5,.5)--(3,.5);
\draw [dotted] (2.5,2.5) -- (3,2.5);
\draw [dotted] (2.5,3) -- (3,3);
\draw (5.5,1.5)--(6,2);
\draw (5.5,2)--(6.5,1);
\draw (6.5,1)--(7.5,1);
\draw (6,1)--(7,2);
\draw (6,2)--(6.5,2);
\draw (6.5,2)--(7,1.5);
\draw (7,1.5)--(7.5,1.5);
\draw (5.5,1)--(6,1);
\draw (7,2)--(7.5,2);
\draw [dotted] (6.5,0) -- (7,0);
\draw (7,0) -- (7.5,0);
\draw [dotted] (6.5,3) -- (7,3);
\draw (7,3) -- (7.5,3);
\draw [dotted] (5,2)--(5.5,2);
\draw [dotted] (5,1)--(5.5,1);
\node [right] at (7.5,-.5) {$2n$};
\node [right] at (7.5,0) {$2n-1$};
\node [right] at (7.5,1) {$1$};
\node [right] at (7.5,1.5) {$n+1$};
\node [right] at (7.5,2) {$2n+1$};
\node [right] at (7.5,3) {$3$};
\node [right] at (7.5,3.5) {$2$};
\draw [dotted] (7.7,2.6)--(7.7,2.3);
\draw [dotted] (7.7,.3)--(7.7,.7);
\end{tikzpicture}\caption{Pseudoline arrangement for $w$.}
\end{figure}

 By our assumption, $y$ lies in $SO_{2n+1}(\C)$ and we know that matrix
 coefficients of $y$ of a certain type \eqref{e:allowable} are positive. Indeed, a chamber minor 
 $\tilde\Delta_{J}(y)=\left<e_{j_1}\wedge\cdots\wedge e_{j_k},\ y\cdot v_{\omega_k}^+\right>$ is of this 
 allowable type precisely if 
 $v=e_{j_1}\wedge\cdots\wedge e_{j_k}$ lies in a $1$-dimensional weight space 
 of  the restricted representation, $Res^{SL_{2n+1}}_{SO_{2n+1}} V_{\tilde\omega_k}$.


All the weight spaces of fundamental representations $V_{\tilde\omega_k}$
of  $SL_{2n+1}(\C)$ are $1$-dimensional. Furthermore, the weights which 
stay non-zero when we restrict to $SO_{2n+1}(\C)$ all stay distinct. Therefore
their weight spaces stay $1$-dimensional. (Whereas the zero weight space 
of $Res^{SL_{2n+1}}_{SO_{2n+1}} V_{\tilde\omega_k}$ becomes potentially 
higher dimensional). 
Now the weight vector $e_{j_1}\wedge\dotsc\wedge e_{j_k}$ in $V_{\tilde\omega_k}$ 
has weight $\tilde\ep_{j_1}+\dotsc+\tilde \ep_{j_k}$, which restricts to
a non-zero weight of the torus $T$ of $SO_{2n+1}$ precisely if the set $J$
of indices  is `asymmetric' about $n+1$, so if there is some $m\in J$ for which $2n+2-m$
is not in $J$. 

The following Claim implies that  the chamber minors of our reduced expression 
 $w_0=w^n$ all have this property. Therefore $\Delta_J(y)>0$ for these minors,
by \eqref{e:allowable}. And therefore $y$ is totally positive, as desired.  

\vskip .2cm

\noindent{\it Claim:} Every chamber in the pseudo-line arrangement associated
to the reduced expression $w^n$ of $w_0$ lies between lines labeled $k$
and $2n+2-k$ for some $k$.

\vskip .2cm
\noindent{\it Proof of the Claim:} 
The pseudo-line arrangement is made up of $n$ copies of the one in Figure~\ref{f:firstw}.
The $j$-th copy is illustrated in Figure~\ref{f:jthw}.
Any chamber in this part of the 
pseudo-line arrangement either lies in between the
lines labeled $j$ and $2n+2-j$, or between
the lines $j+1$ and $2n+1-j$.
This proves the claim.

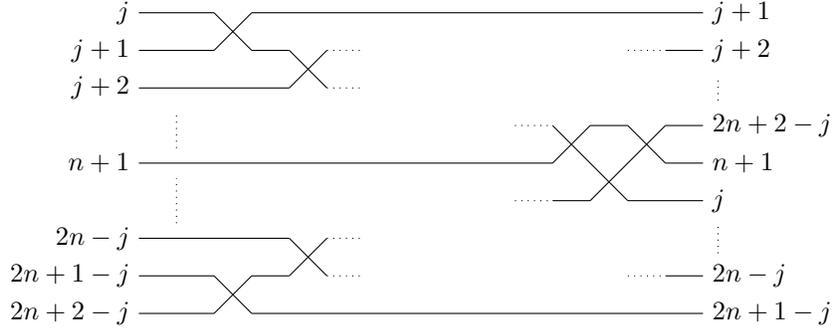
\begin{figure}\label{f:jthw}
\begin{tikzpicture}
\draw (0,3.5)-- (1,3.5);
\node [left] at (0,3.5) {$j$};
\node [left] at (0, 3) {$j+1$}; 
\draw (0,3)-- (1,3); 
\node [left] at (0, 2.5) {$j+2$};
\draw (0,2.5)-- (2,2.5);
\draw (2,2.5)--(2.5,3);
\draw (2.5,2.5)--(2,3);
\draw (1.5,3)--(2,3);
\draw [dotted] (.5, 1.7) -- (.5,2.2);
\node [left] at (0, 1.5) {$n+1$\ };
\draw (0,1.5)-- (5.5,1.5);
\draw [dotted] (.5, .7) -- (.5,1.3);
\draw (0,.5)--(2,.5);
\draw (0,0)--(1,0);
\node [left] at (0,0) {$2n+1-j$}; 
\node [left] at (0,.5) {$2n-j$};
\draw (1,3.5) -- (1.5,3);
\draw (1,3) -- (1.5,3.5);
\draw (1,-.5) -- (1.5, 0);
\draw (1,0)-- (1.5, -.5);
\draw (1.5,3.5)--(7.5,3.5);
\draw (1.5,-.5) -- (7.5,-.5);
\draw (0,-.5) -- (1,-.5);
\node [left] at (0,-.5) {$2n+2-j$}; 
\draw (1.5,0)--(2,0);
\draw (2,0)--(2.5,.5);
\draw (2.5,0)-- (2,.5);
\draw [dotted] (2.5,0)--(3,0);
\draw [dotted]  (2.5,.5)--(3,.5);
\draw [dotted] (2.5,2.5) -- (3,2.5);
\draw [dotted] (2.5,3) -- (3,3);
\draw (5.5,1.5)--(6,2);
\draw (5.5,2)--(6.5,1);
\draw (6.5,1)--(7.5,1);
\draw (6,1)--(7,2);
\draw (6,2)--(6.5,2);
\draw (6.5,2)--(7,1.5);
\draw (7,1.5)--(7.5,1.5);
\draw (5.5,1)--(6,1);
\draw (7,2)--(7.5,2);
\draw [dotted] (6.5,0) -- (7,0);
\draw (7,0) -- (7.5,0);
\draw [dotted] (6.5,3) -- (7,3);
\draw (7,3) -- (7.5,3);
\draw [dotted] (5,2)--(5.5,2);
\draw [dotted] (5,1)--(5.5,1);
\node [right] at (7.5,-.5) {$2n+1-j$};
\node [right] at (7.5,0) {$2n-j$};
\node [right] at (7.5,1) {$j$};
\node [right] at (7.5,1.5) {$n+1$};
\node [right] at (7.5,2) {$2n+2-j$};
\node [right] at (7.5,3) {$j+2$};
\node [right] at (7.5,3.5) {$j+1$};
\draw [dotted] (7.7,2.6)--(7.7,2.3);
\draw [dotted] (7.7,.3)--(7.7,.7);
\end{tikzpicture}
\caption{The $j$-th segment of the pseudo-line arrangement for $w_0=w^n$ in the proof for type $B_n$.}
\end{figure}

\subsection{Type $C_n$}
The order of the weight lattice modulo the root lattice (index of connection) is 2.  Let us choose a basis $\ep_1,\ep_2,\ldots,\ep_n \in \mathfrak t_\R^*$ so that the long simple root is $\alpha_1 = 2\ep_1$, and the short simple roots are $\alpha_k = \ep_k - \ep_{k-1}$ for $2 \leq k \leq n$.
Note that $\omega_{n-1} = \ep_{n-1}+\ep_n$ is allowable, while $\omega_n=\ep_n$ is not.  
For the fundamental representations $V_{\omega_i}$ for $1\le i\le n-2$, which  are not representations of the adjoint group,
we consider $V_{\omega_i + \omega_n}  \subset V_{\omega_i} \otimes V_{\omega_n}$. 
(Note that since $\omega_n$ is also not allowable and the index of connection is $2$, $\omega_i+\omega_n$ is allowable.)
Then for any $w\in W$ we have
$$
\ip{\dot w\cdot v^+_{\omega_i}, y\cdot v^+_{\omega_i}}\ip{ \dot w \cdot v^+_{\omega_n},  v^+_{\omega_n}}=\ip{\dot w\cdot (v^+_{\omega_i}\otimes v^+_{\omega_n}),y\cdot (v^+_{\omega_i}\otimes v^+_{\omega_n})}>0,
$$
by assumption \eqref{e:allowable} on $y$. It remains to show that $\ip{\dot w \cdot v_{\omega_n},y \cdot v_{\omega_n}}>0$,
since then $\ip{\dot w \cdot v_{\omega_i},y \cdot v_{\omega_i}}>0$ for all $w\in W$ and fundamental weights $\omega_i$,
whereby $y$ has to be totally positive, because of Proposition~\ref{P:BZ}.


We now consider $V = V_{\omega_n}$, which is $2n$-dimensional with weights $\pm \ep_k$ for $1 \leq k \leq n$. 
We have the following:
\begin{lem} \label{L:Clemma} \
\begin{enumerate}
\item
The equivalence relation on the weights of $V=V_{\omega_n}$ generated by $\la \sim \mu$ if $\la + \mu \in W \cdot \omega_{n-1}$ has a single equivalence class.
\item
$\omega_{n-1}$ appears as a weight in $V_{2\omega_n}$ with multiplicity 1.  The weight
$\omega_{n-1}$ appears as a weight in $V \otimes V$ with multiplicity 2.
\item
$V_{\omega_{n-1}}$ occurs as an irreducible factor of $V \otimes V$ with multiplicity 1.
\end{enumerate}
\end{lem}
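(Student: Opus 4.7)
The plan is to handle the three parts separately, with parts (2) and (3) both flowing from the same Clebsch--Gordan decomposition of $V\otimes V$. For part (1), I would first identify $W\cdot\omega_{n-1}$ explicitly: the Weyl group $W$ of type $C_n$ acts as signed permutations on $\ep_1,\dotsc,\ep_n$, so
$$W\cdot\omega_{n-1} = W\cdot(\ep_{n-1}+\ep_n) = \{\pm\ep_i \pm \ep_j : i \ne j\}.$$
Thus for any two weights $\pm\ep_k$ and $\pm\ep_\ell$ of $V$ with $k\ne\ell$, their sum lies in $W\cdot\omega_{n-1}$, so they are directly equivalent. In particular $\ep_k \sim \ep_\ell \sim -\ep_k$ for any $\ell \ne k$, yielding a single equivalence class.

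For parts (2) and (3), I would decompose $V\otimes V = \Sym^2 V \oplus \Lambda^2 V$ and identify each factor. Since $V$ is the defining representation of $Sp_{2n}$ and carries a nondegenerate invariant symplectic form, $\Lambda^2 V$ contains a one-dimensional trivial subrepresentation; the complement has highest-weight vector $v_{\ep_{n-1}} \wedge v_{\ep_n}$ of weight $\omega_{n-1}$, so $\Lambda^2 V \cong V_{\omega_{n-1}} \oplus \C$. Meanwhile $\Sym^2 V$ has highest weight $2\ep_n = 2\omega_n$, and the dimension equality $\dim\Sym^2 V = n(2n+1) = \dim V_{2\omega_n}$ (using that $2\omega_n$ is the highest root in the paper's convention) forces $\Sym^2 V \cong V_{2\omega_n}$, the adjoint representation.

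The conclusions then follow. For (3), the summand $V_{\omega_{n-1}}$ appears with multiplicity exactly one in $V \otimes V \cong V_{2\omega_n} \oplus V_{\omega_{n-1}} \oplus \C$. For (2), $\omega_{n-1}=\ep_{n-1}+\ep_n$ is a positive root of type $C_n$, so it appears with multiplicity one in the adjoint representation $V_{2\omega_n}$; it also appears once as the highest weight of $V_{\omega_{n-1}}$ and not at all in the trivial summand, giving total multiplicity two in $V\otimes V$. The only step with any subtlety is the identification $\Sym^2 V \cong V_{2\omega_n}$, which is standard for the defining representation of $Sp_{2n}$ and is cross-checked by the total dimension $n(2n+1)+(n(2n-1)-1)+1 = 4n^2 = \dim(V\otimes V)$, which leaves no room for further summands.
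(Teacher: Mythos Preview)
Your proof is correct. Part (1) is handled exactly as in the paper (identify $W\cdot\omega_{n-1}$ explicitly and observe). For parts (2) and (3) you take a genuinely different route: the paper argues each claim separately --- the multiplicity one of $\omega_{n-1}$ in $V_{2\omega_n}$ from the observation that $2\omega_n-\omega_{n-1}=\alpha_n$ is a simple root, the multiplicity two in $V\otimes V$ by direct inspection of the weights $\pm\ep_k$, and part (3) from the fact that there is no dominant weight strictly between $2\omega_n$ and $\omega_{n-1}$ in the dominance order --- whereas you compute the full Clebsch--Gordan decomposition $V\otimes V\cong V_{2\omega_n}\oplus V_{\omega_{n-1}}\oplus\C$ via the $\Sym^2/\Lambda^2$ splitting and read all three statements off at once. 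The paper's argument is lighter in that it avoids invoking the identification of $\Sym^2 V$ with the adjoint representation and the associated dimension count; your argument is more structural and yields the complete decomposition, which makes the individual multiplicity statements transparent. Both are short and valid.
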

\begin{proof}
We have $W \cdot \omega_{n-1} = \{\pm \ep_i \pm \ep_j \mid 1 \leq i < j \leq n\}$.  (1) follows by inspection.   The first statement of (2) follows from the fact that $2\omega_n - \omega_{n-1} = \alpha_n$ is a simple root.  The second statement of (2) follows by inspection of the weights of $V$.  (3) follows from the fact that there are no weights $\mu$ satisfying $2\omega_n > \mu > \omega_{n-1}$ in dominance order.
\end{proof}

We may now proceed as in the proof for $A_n$ for $n$ odd.  We consider the inclusion $U = V_{2\omega_n} \subset V \otimes V = Z$ and look at a vector $z \in U$ with weight $\nu = \la + \mu \in W \cdot \omega_{n-1}$.  We first argue that $z$ can be chosen so that $\ip{z,x\cdot v_{2\omega_n}^+} >0$ for all totally positive $x \in U^-_{>0}$.  In \cite[Corollary 7.2]{BeZe:Chamber}, Berenstein and Zelevinsky show that there is an inclusion $Sp_{2n}(\C) \to SL_{2n}(\C)$ such that the image of the totally positive part $U^-_{>0}$ of the unipotent of $Sp_{2n}$ lies in the totally nonnegative part of $SL_{2n}$.  Now, $V$ is the standard representation of $SL_{2n}$ and contains the irreducible representation $\Sym^2(V)$.  The restriction of $\Sym^2(V)$ to $Sp_{2n}(\C)$ contains the representation $U$, and $v_{2\omega_n}^+$ is exactly the highest-weight vector of $\Sym^2(V)$.  By Remark \ref{rem:can}, we can choose weight vectors $z \in \Sym^2(V)$ such that $\ip{z,x\cdot v_{2\omega_n}^+} > 0$ for all $x$ which are totally positive in the unipotent of $SL_{2n}$.  It follows that $\ip{z,x\cdot v_{2\omega_n}^+} > 0$ for $x \in U^-_{>0}$.

Under the inclusion $U \subset Z$, the vector $z$ is a linear combination of $v_\la \otimes v_\mu$ and $v_\mu \otimes v_\la$ by Lemma \ref{L:Clemma}(2).  Here $\la, \mu \in W \cdot \omega_n$, and if $\la = w \cdot \omega_n$, then $v_\la = \dot w \cdot v_{\omega_n} \in V$ and similarly for $\mu$. 
We have $z=A v_\la \otimes v_\mu + B v_\mu \otimes v_\la$ for positive $A,B$.  Using Lemma \ref{L:Clemma}(3), we obtain that
\begin{align*}
0 &< \ip{z, y \cdot v^+_{2\omega_n}}_U\\
&= \ip{A v_\la \otimes v_\mu + B v_\mu \otimes v_\la, y \cdot (v^+_{\omega_n} \otimes v^+_{\omega_n})}_Z\\
&=
(A+B) \ip{v_{\la},y \cdot v^+_{\omega_n}}_V \,\ip{v_\mu,y \cdot v^+_{\omega_n}}_V
\end{align*}
It follows that $\ip{v_{\la},y \cdot v^+_{\omega_n}}$ and $\ip{v_{\mu},y \cdot v^+_{\omega_n}}$ have the same sign.  By Lemma \ref{L:Clemma}(1)
$\lambda, \mu$ can be any two weights in $W\cdot\omega_n$ in the arguments above,  therefore $\ip{v_{\la},y \cdot v^+_{\omega_n}}$ has the same sign
as $\ip{v^+_{\omega_n},y \cdot v^+_{\omega_n}}=1$. This concludes the proof in the $C_n$ case.

\subsection{Type $D_n$}
We take as simple roots $\alpha_1 = \ep_1 + \ep_2$ and $\alpha_k = \ep_k-\ep_{k-1}$ for $2 \leq k \leq n$.  Let us consider the (spin) representation $V = V_{\omega_1}$ with highest weight $1/2(\ep_1+\ep_2 + \cdots \ep_n)$.  The argument is the same as for $C_n$ (using also Remark \ref{rem:can}), after the following Lemma.  Note that $\omega_3$ is allowable.

\begin{lem} \
\begin{enumerate}
\item
The equivalence relation on the weights of $V$ generated by $\la \sim \mu$ if $\la + \mu \in W \cdot \omega_{3}$ has a single equivalence class.
\item
$\omega_{3}$ appears as a weight in $V_{2\omega_1}$ with multiplicity 1.  The weight
$\omega_{3}$ appears as a weight in $V \otimes V$ with multiplicity 2.
\item
$V_{\omega_{3}}$ occurs as an irreducible factor of $V \otimes V$ with multiplicity 1.
\end{enumerate}
\end{lem}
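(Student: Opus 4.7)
The plan is to reduce each part to an explicit computation with the weight structure of the spin representation. Using the defining relations $\langle \omega_3, \alpha_j^\vee\rangle = \delta_{3j}$ with the given simple roots $\alpha_1 = \ep_1 + \ep_2$ and $\alpha_k = \ep_k - \ep_{k-1}$ for $k \geq 2$, one computes $\omega_3 = \ep_3 + \ep_4 + \cdots + \ep_n$. The representation $V = V_{\omega_1}$ is minuscule, so its $2^{n-1}$ weights are precisely the Weyl translates of $\omega_1$, namely $\tfrac{1}{2}(\eta_1\ep_1 + \cdots + \eta_n\ep_n)$ with $\eta_i \in \{\pm 1\}$ and $\prod_i \eta_i = 1$. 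Because the two zero coordinates of $\omega_3$ absorb the $D_n$ parity constraint on sign flips, the Weyl orbit $W \cdot \omega_3$ consists of all vectors with $n-2$ coordinates in $\{\pm 1\}$ (in an arbitrary sign pattern) and two zero coordinates.

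For part (1), the condition $\la + \mu \in W \cdot \omega_3$ translates to the statement that $\la$ and $\mu$ agree (as sign sequences) in exactly $n-2$ positions, since nonzero entries of $\la + \mu$ occur exactly at the positions where the signs coincide; equivalently $\la$ and $\mu$ are at Hamming distance $2$. Any two even-parity sign sequences differ in an even number of positions and can thus be connected by a sequence of 2-position sign flips, so the generated equivalence relation has a single class.

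For part (2), the identity $2\omega_1 - \omega_3 = \alpha_1$ combined with $\langle 2\omega_1, \alpha_1^\vee\rangle = 2 > 0$ shows that $f_1 \cdot v^+_{2\omega_1}$ spans the one-dimensional $\omega_3$-weight space of $V_{2\omega_1}$. For the multiplicity in $V \otimes V$, one counts pairs $(\la, \mu)$ of weights of $V$ with $\la + \mu = \omega_3$: this forces $\la_i = \mu_i = \tfrac{1}{2}$ for $i \geq 3$ and $(\la_1, \la_2) = -(\mu_1, \mu_2)$; the parity requirement on both $\la$ and $\mu$ then restricts $(\la_1, \la_2)$ to the two possibilities $\pm(\tfrac{1}{2}, \tfrac{1}{2})$, giving exactly two pairs. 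For part (3), any dominant $\nu$ with $\omega_3 \leq \nu \leq 2\omega_1$ satisfies $\nu - \omega_3 = c\alpha_1$ with $c \in \{0,1\}$, since $(\nu - \omega_3) + (2\omega_1 - \nu) = \alpha_1$ must be written as a sum of two nonnegative integer combinations of simple roots. Hence the only dominant weights in this range are $\omega_3$ and $2\omega_1$; the Cartan component $V_{2\omega_1}$ occurs in $V \otimes V$ with multiplicity one and accounts for one copy of the $\omega_3$-weight space, so by (2) the remaining copy must come from a single $V_{\omega_3}$ summand. The main obstacle is simply the careful bookkeeping of parity constraints in (2); once the weight lattice is made explicit, no further representation-theoretic input is needed.
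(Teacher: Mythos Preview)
Your proof is correct and follows essentially the same approach as the paper. The paper's proof simply records that the weights of $V$ are the even signed permutations of $(1/2,\ldots,1/2)$ and then says ``the rest of the argument is identical to the proof of Lemma~\ref{L:Clemma}'' (the type $C_n$ analogue), whose proof is exactly the outline you have written out: identify $W\cdot\omega_3$ explicitly, check (1) by inspection of which pairs of weights sum into that orbit, deduce the first part of (2) from $2\omega_1-\omega_3=\alpha_1$ being a simple root, count ordered pairs for the second part of (2), and obtain (3) from the absence of dominant weights strictly between $\omega_3$ and $2\omega_1$. Your Hamming-distance reformulation of (1) and the explicit parity bookkeeping in (2) are just more detailed versions of what the paper calls ``by inspection.''
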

\begin{proof}
The representation $V$ has dimension $2^{n-1}$, with weights the even signed permutations of the vector $(1/2,1/2,\ldots,1/2) \in \R^n$.  The rest of the argument is identical to the proof of Lemma \ref{L:Clemma}.
\end{proof}

\subsection{Type $E_6$}
The index of connection of $E_6$ is 3, which is odd.  The proof for $A_n$ with $n$ even can be applied here essentially verbatim.

\subsection{Type $E_7$}
We fix a labelling of the Dynkin diagram by letting $7$ label the minuscule node (at the end of the long leg), and $6$ be the unique node adjacent to $7$.  We note that $\omega_6$ is allowable.  The argument is the same as for $C_n$ (using also Remark \ref{rem:can}), after the following Lemma.

\begin{lem} \
\begin{enumerate}
\item
The equivalence relation on the weights of $V$ generated by $\la \sim \mu$ if $\la + \mu \in W \cdot \omega_{6}$ has a single equivalence class.
\item
$\omega_{6}$ appears as a weight in $V_{2\omega_7}$ with multiplicity 1.  The weight
$\omega_{6}$ appears as a weight in $V \otimes V$ with multiplicity 2.
\item
$V_{\omega_{6}}$ occurs as an irreducible factor of $V \otimes V$ with multiplicity 1.
\end{enumerate}
\end{lem}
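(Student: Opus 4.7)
The plan is to mirror the proof of Lemma~\ref{L:Clemma} using the $56$-dimensional minuscule representation $V=V_{\omega_7}$ of $E_7$. The role of the simple-root identity $2\omega_n-\omega_{n-1}=\alpha_n$ in the $C_n$ case is played here by $2\omega_7-\omega_6=\alpha_7$, which one reads off directly from the $7$th row of the $E_7$ Cartan matrix under the chosen labeling. Since $V$ is minuscule, its weights form a single Weyl orbit $W\cdot\omega_7$, each with multiplicity one, and this is the only representation-theoretic input beyond Cartan data that the argument requires.

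For part (2), the weight $\omega_6=2\omega_7-\alpha_7$ has multiplicity one in $V_{2\omega_7}$ since it lies just below the highest weight in a single simple-root direction with $\ip{2\omega_7,\alpha_7^\vee}=2$. The multiplicity of $\omega_6$ in $V\otimes V$ equals the number of ordered pairs $(\mu_1,\mu_2)$ of weights of $V$ with $\mu_1+\mu_2=\omega_6$; writing $\mu_i=\omega_7-a_i$ with $a_i\in Q_+$ (as $\omega_7$ is the highest weight of $V$) reduces this to decomposing $\alpha_7=a_1+a_2$ in $Q_+$, which, $\alpha_7$ being simple, admits only $(0,\alpha_7)$ and $(\alpha_7,0)$. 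Both solutions correspond to genuine weights of $V$, namely $\omega_7$ and $s_7\omega_7=\omega_7-\alpha_7$, giving multiplicity two. For part (3), the same identity $2\omega_7-\omega_6=\alpha_7$ implies that no dominant weight strictly between $\omega_6$ and $2\omega_7$ exists, so the only irreducible constituents of $V\otimes V$ with highest weight $\ge\omega_6$ are $V_{2\omega_7}$ (appearing once, inside $\Sym^2 V$) and possibly $V_{\omega_6}$; comparing with part (2) forces $[V\otimes V:V_{\omega_6}]=2-1=1$.

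For part (1), the central observation is that for every $w\in W$ the pair $(w\omega_7,ws_7\omega_7)$ has sum $w(2\omega_7-\alpha_7)=w\omega_6\in W\cdot\omega_6$, producing an edge in the equivalence graph; equivalently, whenever $\lambda\in W\cdot\omega_7$ and $\beta$ is a root with $\ip{\lambda,\beta^\vee}=1$, the pair $(\lambda,\lambda-\beta)$ is an edge. Because $V$ is minuscule, any reduced expression $w=s_{i_1}\cdots s_{i_k}$ gives a sequence $\omega_7, s_{i_1}\omega_7, s_{i_2}s_{i_1}\omega_7,\ldots,w\omega_7$ in which consecutive terms either coincide or differ by a simple root, joined in the latter case by an edge of the above form; transitivity of $W$ on the $56$ weights then yields a single equivalence class. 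The only real obstacle is bookkeeping: once the Cartan identity $2\omega_7-\omega_6=\alpha_7$ is noted and the minuscule weight diagram of $V_{\omega_7}$ is seen to be connected under simple-root moves, the argument is formally identical to the $C_n$ case.
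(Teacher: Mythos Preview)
Your argument is correct in outline and, notably, gives a conceptual proof where the paper simply records a computer verification (``Can be verified by computer, which we did using John Stembridge's {\tt coxeter/weyl} package'').  Parts (2) and (3) are fully justified by your use of the Cartan identity $2\omega_7-\omega_6=\alpha_7$, exactly as in the $C_n$ lemma, and there is nothing to add there.

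For part (1), however, your word ``equivalently'' hides a nontrivial step.  You produce edges of the form $(w\omega_7,ws_7\omega_7)$, and then assert that this is the same as the set of pairs $(\lambda,\lambda-\beta)$ with $\lambda\in W\omega_7$ and $\ip{\lambda,\beta^\vee}=1$.  One direction is immediate (take $\beta=w\alpha_7$), but the other --- which is exactly what your chain argument uses when consecutive terms differ by a \emph{simple} root $\alpha_i$ --- requires that for any root $\gamma$ with $\ip{\omega_7,\gamma^\vee}=1$ there exists $v$ in the stabilizer $W_P=W_{E_6}$ of $\omega_7$ with $v\alpha_7=\gamma$.  Equivalently, the $27$ positive roots with $\alpha_7$-coefficient equal to $1$ must form a single $W_{E_6}$-orbit.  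This is true (they are precisely the weights of the minuscule $27$-dimensional $E_6$-representation on $\mathfrak u_P$, hence a single Weyl orbit), but you should state and justify it; without it the connectivity step does not follow from the edges you have actually exhibited.

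In short: the paper's proof is a computation, yours is a genuine argument, and the only missing ingredient is the one-line observation that the stabilizer $W_{E_6}$ acts transitively on the $27$ roots orthogonal-complementary to $\Delta_{P}$, which comes for free from the minusculeness of the adjacent node.
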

\begin{proof}
Can be verified by computer, which we did using John Stembridge's {\tt coxeter/weyl} package.
\end{proof}

\subsection{Types $E_8$, $F_4$, and $G_2$}
The adjoint group is simply-connected, so there is nothing to prove here.

\appendix

\section{Quantum Schubert positivity implies affine Schubert positivity in type $C$}\label{s:proof1}

%
We shall need the quantum Chevalley formula of $qH^*(G/B)$, due to Peterson \cite{Pet:QCoh} and Fulton-Woodward \cite{FuWo:SchubProds}.

For $w \in W$, define $\pi_P(w):=w_1$, where $w=w_1w_2$ with $w_1\in W^P$
and $w_2\in W_P$.  Also we have that $2\rho$ is the sum of positive roots and set $2\rho_P :=
\sum_{\alpha \in \Delta_{P,+}} \alpha$. Let $Q_P^\vee$ be the sublattice of $Q^\vee$
spanned by the simple coroots $\alpha_j^\vee$ for $j\in I_P$, and let $\eta_P:Q^\vee\to
Q^\vee/Q_P^\vee$ be the natural projection.  We let $w \gtrdot v$ denote a cover in Bruhat order.

\begin{thm}[Quantum equivariant Chevalley formula \cite{Pet:QCoh,FuWo:SchubProds}]
\label{t:qChev} Let $i \in I^P$ and $w \in W^P$.  Then
we have in $qH^*(G/P)$
\begin{equation}\label{e:qChev}
\sigma^P_{s_i} \, \sigma^P_{w} =  \sum_\alpha \ip{\alpha^\vee,
\omega_i}\,\sigma^P_{w r_\alpha} + \sum_\alpha
\ip{\alpha^\vee,\omega_i}\,q_{\eta_P(\alpha^\vee)}\,\sigma^P_{\pi_P(wr_\alpha)}
\end{equation}
where the first summation is over $\alpha \in \Delta^P_+$
such that $wr_\alpha \gtrdot w$ and $wr_\al\in W^P$, and the second
summation is over $\alpha \in \Delta^P_+$ such that
$\ell(\pi_P(wr_\alpha)) = \ell(w) + 1 - \ip{\alpha^\vee, 2(\rho-\rho_P)}$.
\end{thm}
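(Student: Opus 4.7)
The plan is to reduce first to the case $P=B$ and then extract the general parabolic version by pushing the formula through the comparison machinery of Theorem~\ref{t:PetIsoLoop}. For $P=B$, the first sum of \eqref{e:qChev} is exactly the classical Chevalley formula in $H^*(G/B)$, which can be established directly by computing $\int_{G/B}\sigma_{s_i}\cup\sigma_w\cup\sigma_v$ using the Borel presentation or by Monk's combinatorial argument. All the new content is therefore in the quantum correction terms, i.e., in computing $\ip{\sigma_{s_i},\sigma_w,\sigma_v}_d$ for nonzero $d=\sum a_j\alpha_j^\vee \in Q^\vee_+$.

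For these, the key structural input is the divisor axiom for Gromov-Witten invariants: since $\sigma_{s_i}$ is a divisor class, one has $\ip{\sigma_{s_i},\sigma_w,\sigma_v}_d = \ip{d,\omega_i}\,\ip{\sigma_w,\sigma_v}_d^{(2)}$, where the right-hand 2-point invariant is a GW number on $\overline{M}_{0,2}(G/B,d)$. The virtual dimension count on $\overline{M}_{0,3}(G/B,d)$ forces the codimension constraint $\ell(w)+\ell(v)+1=\ell(w_0)+\ip{d,2\rho}$, which already explains the length condition $\ell(wr_\alpha)=\ell(w)+1-\ip{\alpha^\vee,2\rho}$ appearing in the second sum once one sets $d=\alpha^\vee$ and $v=PD(wr_\alpha)$. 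What remains is (i) to exhibit the contribution of each coroot degree $d=\alpha^\vee$ for $\alpha\in\Delta^P_+$ with the stated coefficient $\ip{\alpha^\vee,\omega_i}$, and (ii) to prove that no other curve class $d$ contributes. Item (i) is handled by showing $\ip{\sigma_w,\sigma_v}_{\alpha^\vee}^{(2)}=\delta_{v,PD(wr_\alpha)}$ under the length condition, either via Fulton-Woodward's quantum-to-classical principle (identifying the relevant curve moduli with a $\mathbb P^1$-bundle over a smaller flag variety whose classical intersection theory produces the Kronecker delta) or via Peterson's quantum Bruhat graph interpretation.

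For item (ii) I would use the fact that among effective classes $d$ satisfying the length equality and $\ip{d,\omega_i}\neq 0$, the minimality of $\sigma_{s_i}$ as a divisor class restricts the possible $d$ to single coroots of roots $\alpha$ with $\ip{\alpha^\vee,\omega_i}\ne 0$. The cleanest argument here is the Peterson/Fulton-Woodward control on the minimal power of $q$ in a quantum product of Schubert classes, together with an induction on $\ip{d,2\rho}$. Finally, to upgrade from $B$ to $P$, I would apply the Peterson projection $\pi_P$ and lattice projection $\eta_P$: the classical summands of the $B$-formula indexed by $\alpha\in\Delta^P_+$ with $wr_\alpha\in W^P$ survive in the classical part of \eqref{e:qChev}; summands with $wr_\alpha\in W\setminus W^P$ vanish under $\pi_P$ once they are not of the form $v\cdot s_j$ with $j\in I_P$; and quantum terms whose coroot lies in $Q^\vee_P$ are killed by $\eta_P$ and absorbed into the classical sum, leaving exactly the quantum piece in \eqref{e:qChev}.

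I expect the main obstacle to be the vanishing statement in item (ii), namely ruling out contributions from higher and non-coroot effective degrees; this is the step where the combinatorial/geometric subtlety of the quantum product enters in an essential way, and the rest of the proof is essentially bookkeeping around the divisor axiom, the classical Chevalley formula, and the parabolic comparison map.
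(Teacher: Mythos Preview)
The paper does not prove this statement. Theorem~\ref{t:qChev} is quoted in the appendix as a known result, attributed to Peterson \cite{Pet:QCoh} and Fulton--Woodward \cite{FuWo:SchubProds}, and is then used as a black box in the proof that quantum Schubert positivity implies affine Schubert positivity in type~$C$. There is therefore no ``paper's own proof'' to compare against.

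That said, your sketch is broadly in the spirit of Fulton--Woodward's actual argument: the divisor axiom reduces the three-point invariant with $\sigma_{s_i}$ to a two-point invariant with coefficient $\ip{d,\omega_i}$, and the dimension constraint produces the length condition. The step you flag as the main obstacle---ruling out contributions from degrees $d$ other than single coroots---is indeed the substantive content of \cite{FuWo:SchubProds}, where it is handled by a geometric analysis of curves meeting translated Schubert varieties. Your parabolic upgrade at the end is too breezy, however: the passage from $qH^*(G/B)$ to $qH^*(G/P)$ is not simply ``apply $\pi_P$ and $\eta_P$ termwise to the $B$-formula''. The relationship between the two rings is Woodward's comparison theorem (also due to Peterson), and the bookkeeping of which $B$-terms collapse, survive, or get absorbed is more delicate than your last paragraph suggests---in particular the assertion that terms with $wr_\alpha\notin W^P$ ``vanish under $\pi_P$'' is not correct as stated. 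If you want a self-contained proof along these lines you should either work directly in $qH^*(G/P)$ from the start (as Fulton--Woodward do) or invoke the comparison theorem explicitly and carefully.
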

It is known that in the second summation, we only need to sum over $\alpha$ such that $\ell(r_\alpha) = \ip{\alpha^\vee,2\rho}-1$.

%

We now let $G$ be of type $C_n$.  We choose conventions so that the $\alpha_1,\ldots,\alpha_{n-1}$ are short and $\alpha_n$ is long.  One may check that the positive coroots $\alpha^\vee$ satisfying $\ell(r_\alpha) = \ip{\alpha^\vee,2\rho}-1$ are exactly those of the form $\alpha^\vee_i + \alpha^\vee_{i+1} + \cdots + \alpha^\vee_{j}$.

\begin{prop}
Conjecture \ref{c:QP} holds in type $C_n$.
\end{prop}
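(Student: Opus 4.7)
The plan is to prove the remaining direction $X^{\q}_{>0} \subseteq X_{>0}$, since Theorem~\ref{thm:three} already gives $X_{>0} = X^{\af}_{>0} \subseteq X^{\q}_{>0}$ (the latter being immediate from Peterson's isomorphism combined with positivity of the $q_i$ on $X^{\af}_{>0}$). So let $x \in X^{\q}_{>0}$, meaning $\sigma_w^B(x) > 0$ for every $w \in W$; the goal is to deduce $x \in X^{\af}_{>0}$.

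By \eqref{e:PetIsoSchub}, for $wt_\lambda, t_\mu \in W_\af^-$ we have $\xi_{wt_\lambda}(x) = q_{\lambda-\mu}(x)\,\sigma_w^B(x)\,\xi_{t_\mu}(x)$. Using the factorization $\xi_{wt_\nu}\xi_{t_\mu} = \xi_{wt_{\nu+\mu}}$ and the fact that the monoid of antidominant translations in $Q^\vee$ is generated by the elements $t_{-m_i\omega_i^\vee}$ (for which $\xi_{t_{-m_i\omega_i^\vee}}$ is a Laurent monomial in the $q_j$ via Peterson's isomorphism), the reduction is the following: it suffices to show that $q_i(x) > 0$ for every $i \in I$ whenever $x \in X^{\q}_{>0}$.

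To establish this, we invoke the quantum Chevalley formula (Theorem~\ref{t:qChev}) together with the structural fact highlighted just above the statement of the proposition: in type $C_n$, the positive coroots satisfying $\ell(r_\alpha) = \ip{\alpha^\vee, 2\rho}-1$ are precisely the consecutive simple coroot sums $\alpha_i^\vee + \alpha_{i+1}^\vee + \cdots + \alpha_j^\vee$. Consequently every quantum parameter appearing in Theorem~\ref{t:qChev} is an ``interval'' product $q_{[i,j]} := q_iq_{i+1}\cdots q_j$. The strategy, closely following \cite{Rie:QCohPFl} in type $A$, is to show $q_{[i,j]}(x) > 0$ by downward induction on $j-i+1$: for each interval we produce a quantum Chevalley identity of the form $\sigma_{s_k}\sigma_v = (\text{positive classical terms}) + c\cdot q_{[i,j]}\sigma_u + (\text{other quantum terms})$ in which the ``other quantum terms'' involve only strictly shorter intervals $q_{[i',j']}$ already handled by the induction hypothesis, and in which $c > 0$. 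Rearranging and evaluating at $x$, combined with the positivity of all Schubert classes, isolates the sign of $q_{[i,j]}(x)$. The base case $j = i$ yields $q_i(x) > 0$, completing the reduction.

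The main obstacle is to arrange, for each $[i,j]$, the particular quantum Chevalley identity so that the ``other quantum terms'' genuinely involve only shorter intervals and not $q_{[i,j]}$ itself (nor intervals not yet controlled). This is a combinatorial task on the Weyl group $W(C_n)$ and requires a careful choice of $k$ and $v$; the interval structure of the contributing coroots — shared with type $A$ but absent in types $B$, $D$, and the exceptional types — is exactly what makes such choices possible. Once the identity is in place, positivity of $q_{[i,j]}(x)$ follows from the strict positivity of $\sigma_w^B(x)$ together with the inductively established positivity of the shorter intervals, completing the proof of Conjecture~\ref{c:QP} in type $C_n$.
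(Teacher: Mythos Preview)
Your reduction to showing $q_i(x)>0$ for all $i$ is correct and is exactly the paper's first step.  However, the inductive scheme you describe is internally inconsistent, and the key combinatorial step is left undone.  You say you proceed by \emph{downward} induction on the interval length $j-i+1$, yet you require that ``the other quantum terms involve only strictly \emph{shorter} intervals already handled by the induction hypothesis.''  In a downward induction on length the hypothesis gives you \emph{longer} intervals, not shorter ones; as written the induction does not close.  Moreover, you never actually exhibit the elements $k,v\in W$ producing a Chevalley identity with the required control on the quantum terms --- you assert that the interval structure ``makes such choices possible'' but provide no choice and no verification.  Without this the argument is only a plan.

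The paper's proof avoids both issues by a single concrete choice that makes the whole induction trivial.  For each $i$, take $v_i$ to be the longest element of $W^{P_i}$ (the maximal parabolic omitting $i$) and compute $\sigma_{s_i}\sigma_{v_i}$ via the quantum Chevalley formula.  Since $v_i\alpha<0$ for all $\alpha\in\Delta_+^{P_i}$, the classical part vanishes, and the only coroots contributing a quantum term are $\alpha_i^\vee$ and (for $i<n$) $\beta_i^\vee=\alpha_i^\vee+\cdots+\alpha_n^\vee$.  One obtains
\[
\sigma_{s_n}\sigma_{v_n}=q_n\,\sigma_{v_ns_n},\qquad
\sigma_{s_i}\sigma_{v_i}=q_i\,\sigma_{v_is_i}+q_iq_{i+1}\cdots q_n\,\sigma_{v_ir_{\beta_i}}\quad(i<n).
\]
Now a straightforward downward induction on the \emph{index} $i$ (not on interval length) gives $q_n(x)>0$, then $q_{n-1}(x)>0$, and so on.  This is what you should aim for: the correct inductive parameter is $i$, and the correct test element is $v_i=w_0^{P_i}$.
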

\begin{proof}
Since we already know that we have $X^\af_{>0} = X_{>0}$ and $X^\af_{>0} \subseteq X^\q_{>0}$, it suffices to show that any quantum Schubert positive point is also affine Schubert positive.  By Theorem \ref{t:PetIsoLoop}, it suffices to show that if $x \in X^\q_{>0}$ then $q_i(x) > 0$ for each $i \in I$.

Now let $i \in I$, and let $v_i$ be the longest element in $W^{P_i}$, where $P_i$ is the maximal parabolic labeled by $i$.  Let us consider the product $\sigma^{s_i} \,\sigma^{v_i}$ and apply Theorem \ref{t:qChev} for the base $P = B$.  Since $v_i\alpha < 0$ for any $\alpha \in \Delta_+^{P_i}$, we see that the first summation of \eqref{e:qChev} is empty.  We note that $\ell(v_ir_\alpha) = \ell(v_i) -\ell(r_\alpha)$ if and only if $r_\alpha \in W^{P_i}$.  The only such coroots $\alpha^\vee$ which also satisfy $\ell(r_\alpha) = \ip{\alpha^\vee,2\rho}-1$ are $\alpha^\vee_i$ and $\beta^\vee_i:=\alpha^\vee_i + \cdots + \alpha^\vee_n$ in the case $1 \leq i \leq n-1$, and if $i = n$ we only have $\alpha^\vee_n$.  Thus we obtain 
\begin{align*}
\sigma^{s_i}\,\sigma^{v_i} &= q_i \sigma^{v_is_i} +  q_iq_{i+1}\cdots q_n \sigma^{v_ir_{\beta_i^\vee}}& \mbox{for $i \neq n$}\\
\sigma^{s_n}\,\sigma^{v_n} &= q_n \sigma^{v_is_i}
\end{align*}
It follows that if $\sigma^{w}(x) > 0$ for all $w \in W$ then $q_i(x)>0$ for all $i$.
\end{proof}





\def\cprime{$'$}
\providecommand{\bysame}{\leavevmode\hbox to3em{\hrulefill}\thinspace}

\end{document}